\theoremstyle{plain}
\newtheorem*{lemma*}{Lemma}
\newtheorem{lemma}[subsection]{Lemma}
\newtheorem*{theorem*}{Theorem}
\newtheorem{theorem}[subsection]{Theorem}
\newtheorem*{proposition*}{Proposition}
\newtheorem{proposition}[subsection]{Proposition}
\newtheorem*{corollary*}{Corollary}
\newtheorem{corollary}[subsection]{Corollary}
\theoremstyle{definition}
\newtheorem*{definition*}{Definition}
\newtheorem{definition}[subsection]{Definition}
\newtheorem*{example*}{Example}
\newtheorem{notation}[subsection]{Notation}
\theoremstyle{remark}
\newtheorem*{remark*}{Remark}
\newtheorem{remark}[subsection]{Remark}
\newcommand{\undera}{{\underline a}}
\newcommand{\aundera}{{a}}
\newcommand{\underb}{{\underline b}}
\newcommand{\underc}{{\underline c}}
\newcommand{\mQ}{\mathbb Q}
\newcommand{\mZ}{\mathbb Z}
\newcommand{\calA}{\mathcal A}
\newcommand{\calB}{\mathcal B} \newcommand{\calC}{\mathcal C}
 \newcommand{\calL}{\mathcal L} \newcommand{\calM}{\mathcal M}
\newcommand{\gob}{\mathfrak b}  
  \newcommand{\gog}{\mathfrak g}
\newcommand{\goh}{\mathfrak h}  
\newcommand{\gok}{\mathfrak k} \newcommand{\gol}{\mathfrak l} 
\newcommand{\gon}{\mathfrak n}  
\newcommand{\got}{\mathfrak t} \newcommand{\gou}{\mathfrak u}
\newcommand{\gra}{\alpha}  \newcommand{\grb}{\beta}       
\newcommand{\grd}{\delta}   
  \newcommand{\grl}{\lambda}     \newcommand{\grs}{\sigma}
\newcommand{\grf}{\varphi}       
\newcommand{\mk}  {\Bbbk}
\newcommand{\lra}      {\longrightarrow}
\renewcommand{\geq}    {\geqslant}
\renewcommand{\leq}    {\leqslant}
\newcommand{\senza}    {\smallsetminus}
\newcommand{\isocan}   {\simeq}
\DeclareMathOperator{\Hom}{Hom}
\DeclareMathOperator{\Spec}{Spec}
            \newcommand{\st}       {\, : \,}
         \newcommand{\mand}     {\text{ and }}
\title[Steinberg section]{A generalized Steinberg section and
  branching rules for quantum groups at roots of 1}
\author{C.~De Concini,\quad A.~Maffei}
\thanks{The  authors are partially supported by  MIUR}
\dedicatory{Dedicated to the memory of I.~M.~Gelfand}
\address{Dipartimento di Matematica ``G. Castelnuovo", Universit\`a ``La Sapienza", Roma}
\begin{document}

\maketitle

\begin{Small}
\begin{it}
\begin{flushright}
\parbox{4in}{
\begin{verse}
O cara piota mia che s\`\i\ t'insusi,\\
che come veggion le terreni menti,\\
non cap\`ere in triangol due ottusi,\\
\medskip
cos\`\i\ vedi le cose contingenti\\
anzi che sieno in s\'e, mirando il punto\\
a cui tutti li tempi son presenti;\\
\medskip
\emph{(La Divina Commedia, Paradiso, Canto XVII)}
\end{verse}}
\end{flushright}
\end{it}
\end{Small}

\begin{abstract} In this paper we construct a generalization of the classical Steinberg
section \cite{stein} for the quotient map of a semisimple group with
respect to the conjugation action. We then give various applications
of our result including the construction of a sort of Gelfand-Zeitlin basis for a generic irreducible representation of $U_q(GL(n))$
when $q$ is a primitive odd root of unity. \end{abstract}

\section{Introduction}
In his famous paper \cite{stein}, Steinberg introduces a remarkable
section of the quotient map of a simply connected semisimple group
modulo its conjugation action. In this paper we are going to extend
this construction as follows.  Let $G$ be a reductive group with
simply connected semisimple factor over an algebraically
closed  field $\mk$. Let $T$
be a maximal torus, $B\supset T$ a Borel subgroup and $B^-$ the
opposite Borel subgroup. We take a sequence $\mathcal
L=\{L_1\subset L_2\subset\cdots\subset L_h=G\}$ of standard Levi
subgroups such that for each $i=1,\ldots ,h-1$, each simple factor of
$L_i$ does not coincide with a simple factor of $L_{i+i}$.  A sequence
$\calM = \{M_1\subset \dots \subset M_h\}$ of connected subgroups of $G$
is  said to be admissible if, for all $i=1,\ldots ,h$, $M_i$ is a subgroup of
$L_i$ containing $L_i^{ss}$ and there exist subtori $S_i \subset T$
such that \begin{enumerate}[\indent S1:] \item $M_i = S_i \ltimes L_i^{ss}$;
  \item $S_{i+1} \cap M_i = \{1\}$. \end{enumerate} Notice that if $G$
is simple and $L$ is any standard Levi factor properly contained in
$G$, then $L\subset G$ is automatically admissible. Also the sequence
$\{GL(1)\subset GL(2)\cdots \subset GL(n)\}$ is admissible.

Taking the unipotent radical $U^-\subset B^-$, in Section 2 we define
a simultaneous quotient map
$$ r_\calM : B \times U^- \lra \prod_{i=1}^h M_i/\!/ Ad(M_i) $$ and we
show, Theorem \ref{teo:simultanea}, that $r_\calM$ has a section,
which if $\calM=\{G\}$, coincides with the Steinberg section once we identify $B \times U^- $ with the open Bruhat cell $BU^-\subset G$. 

In the case of the sequence $\{GL(1)\subset GL(2)\cdots \subset
GL(n)\}$ we give, in  Theorem \ref{teo:GL}, an alternative elementary proof of this statement
which uses a curious explicit parametrization of the Borel subgroup
$B\subset GL(n)$.

As a consequence of our result the coordinate ring $
\bigotimes_{i=1}^h \mk[M_i]^{Ad(M_i)}$ of $\prod_{i=1}^h M_i/\!/
Ad(M_i)$, which is a polynomial ring with some variables inverted,
embeds in the coordinate ring of $B\times U^-$.   Consider the  group 
$$ H=\{(x,y)\in B\times B^- \st \pi_T(x)=\pi^-_T(y)^{-1}\}$$ where
$\pi_T,\pi_T^-$ are the projections on $T$ of $B,B^-$ respectively, Poisson dual of $G$. Define $\rho:H\to G$ by
$\rho((x,y))=xy^{-1}$. The map $\rho$ is a $2^{rk G}$ to $1$ covering map of the open set $BU^-$. So, we
also get an inclusion of $ \bigotimes_{i=1}^h \mk[M_i]^{Ad(M_i)}$ into
$\mk[H]$ and we remark that the image is a Poisson commutative
subalgebra of $\mk[H]$.

This is particularly interesting in the case of the sequence
$\{GL(1)\subset GL(2)\cdots \subset GL(n)\}$. In this case this
algebra, which is a multiplicative analogue of the Gelfand-Zeitlin
subalgebra of the coordinate ring of the space of $n\times n$
matrices, has transcendence degree $n(n^2+n)/2$ which is maximal. So, it    gives a completely integrable Hamiltonian system. It is worth
pointing out that in the case of square matrices and also in the case
of the Poisson dual of the unitary groups these systems have been
known and studied for quite some time, see for example
\cite{AlMe,KW1,KW2} and reference therein.

Finally in the last two sections, we give an application of our result to
the study of the quantum enveloping algebra associated to $G$ when the
deformation parameter is a primitive root of one. We show, following
ideas in \cite{dprr}, how to decompose a ``generic" irreducible
representation when it is restricted to the quantum enveloping algebra
of an admissible subgroup $M$ satisfying some further assumptions.

In the case in which  $G=GL(n)$ and $M=GL(n-1)$, this gives a multiplicity
one decomposition which can be used to decomposes our module into a
direct sum of one dimensional subspaces in a way which is quite
analogue to that appearing in the classical work of Gelfand-Zeitlin
\cite{GZ}.

We wish to thank David Hernandez for various discussions about the
structure of representations of quantum affine algebras at roots of
one.  These discussions led us to consider the problem discussed in
Section 6 which has been  the starting point of this paper. 
We would like to thank also Ilaria Damiani for help regarding quantum groups and the Hausdorff Institute for Mathematics 
for its hospitality.

\section{A simultaneous version of Steinberg resolution}\label{sez:stein}
Let $G$ be a connected reductive group over an algebraically closed
field $\mk$ whose Lie algebra we denote by $\mathfrak g$. Assume   that the semisimple part of $G$ is simply connected. 
    Let $q:G\lra G/\!/ Ad(G)$ be
the quotient of $G$ under the adjoint action.

Choose  a maximal torus $T$ and a Borel subgroup $B\supset T$ in $G$.
Denote by $\Phi$ the corresponding set of roots, by $\Delta$ that of
simple roots, by $\Phi^+$ that of positive roots and by  $W=N_G(T)/T$  the
  Weyl group. The inclusion $T\subset G$ induces an
isomorphism $G/\!/Ad(G) \isocan T/W$.

If $\gra\in \Phi$,  we denote by $\check\gra(t)$ the corresponding
cocharacter. Given $\alpha$, we can associate to $\alpha$ an $SL(2)$
embedding $\gamma_{\gra}$ into $G$ and we take the unipotent one
parameter subgroups
$$X_\gra(a)=\gamma_{\gra}\Bigg (\left(\begin{array}{cc}1 & a \\0 & 1\end{array}\right)\Bigg),\ \
Y_\gra(a)=\gamma_{\gra}\Bigg(\left(\begin{array}{cc}1 & 0 \\a & 1\end{array}\right)\Bigg).$$
Finally set
$$s_\gra=X_{\gra}(1)Y_\gra(-1)X_\gra(1) =\gamma_{\gra}\Bigg
(\left(\begin{array}{cc}0 & 1 \\-1 & 0\end{array}\right)\Bigg).$$

\subsection{The Steinberg section in the semisimple case}\label{ssec:Steinbergss}
Let us recall the construction of the Steinberg section. 
Consider the vector space $\mk^{\Delta}$ of $\mk$-valued maps on the
set of simple roots $\Delta$. If we fix an order
$\gra_1,\ldots,\gra_n$ of the set of simple roots, we  identify
$\mk^{\Delta}$ with $\mk^n$ setting for each $\aundera\in
\mk^{\Delta}$, $a_i=\aundera(\alpha_i)$. In \cite{stein} , given such an ordering, 
Steinberg defined the map $St: \mk^n\to G$ by
  $$St((a_1,\dots,a_n))= X_{\gra_n}(a_n)s_{\gra_n}\cdots
X_{\gra_1}(a_1)s_{\gra_1}$$ and proved the following Theorem.

\begin{theorem}[\cite{stein}, Theorem 7.9] If $G$ is semisimple
and simply connected, the composition $q\circ St:\mk^n\to G/\!/ Ad(G)$
is an isomorphism.

Furthermore, the image of $St$ is contained in the regular locus and
intersects each regular conjugacy class exactly in one point.
\end{theorem}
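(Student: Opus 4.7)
The plan is to follow Steinberg's original strategy, reducing the isomorphism claim to a triangular polynomial calculation on the fundamental characters. Since $G$ is simply connected and semisimple, Chevalley's restriction theorem combined with the Chevalley--Shephard--Todd theorem applied to $W$ acting on $T$ gives
$$\mk[G]^{Ad(G)} \cong \mk[T]^W \cong \mk[\chi_{\omega_1}, \ldots, \chi_{\omega_n}],$$
where $\chi_{\omega_i}$ is the character of the fundamental representation $V(\omega_i)$. Thus $G/\!/Ad(G) \cong \mA^n$ and $q \circ St$ becomes the polynomial map $a \mapsto (\chi_{\omega_i}(St(a)))_{i=1}^n$.

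The heart of the argument is to show that
$$\chi_{\omega_i}(St(a)) = c_i a_i + P_i(a_1, \ldots, a_{i-1})$$
for some $c_i \in \mk^\times$ and $P_i \in \mk[a_1, \ldots, a_{i-1}]$, which immediately makes $q \circ St$ an automorphism of $\mA^n$. To establish this triangular form, analyze the action of $St(a)$ on $V(\omega_i)$ using: (i) $X_{\alpha_j}(a)$ fixes the highest weight vector $v_{\omega_i}$; (ii) $s_{\alpha_j} v_{\omega_i} = \pm v_{\omega_i}$ for $j \neq i$, since $\langle \omega_i, \alpha_j^\vee\rangle = 0$; (iii) $s_{\alpha_i} v_{\omega_i}$ has lower weight $\omega_i - \alpha_i$. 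Tracing these through the product defining $St(a)$, the diagonal matrix coefficient $\langle v_{\omega_i}^*, St(a) v_{\omega_i}\rangle$ depends linearly on $a_i$ (coming from the unique factor $X_{\alpha_i}(a_i)$ able to raise a weight back up to $\omega_i$), while other diagonal contributions to the trace involve lower weights and are polynomial in $a_1, \ldots, a_{i-1}$.

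For the regularity claim, recall that $g \in G$ is regular iff the differentials $d\chi_{\omega_1}|_g, \ldots, d\chi_{\omega_n}|_g$ are linearly independent in $T_g^* G$. Since $q \circ St$ is an isomorphism, its Jacobian is nonzero at every point; by the chain rule this Jacobian factors through the $d\chi_{\omega_i}|_{St(a)}$, forcing them to be linearly independent. Hence every $St(a)$ is regular. Injectivity of $q \circ St$ together with the standard fact that regular conjugacy classes are determined by their image in $G/\!/Ad(G)$ then yields the unique-intersection statement.

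The main obstacle is the triangularity step. Tracking the matrix coefficients requires care with the chosen ordering of simple roots and the Bruhat structure of $St(a)$, which lies in the Bruhat cell of the Coxeter element $s_{\alpha_n} \cdots s_{\alpha_1}$. The subtle point is verifying that factors with index $j > i$ do not contaminate the $v_{\omega_i}$-diagonal matrix coefficient in a way that breaks the triangular form; this demands a careful weight-chasing argument inside $V(\omega_i)$, most likely carried out inductively by length in the Coxeter word.
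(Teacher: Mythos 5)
The paper does not prove this result; it is quoted directly from Steinberg \cite{stein} (Theorem~7.9) and used as input. So there is no internal proof to compare against, and your proposal has to be judged on its own.

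Your overall strategy is indeed the one Steinberg follows: identify $G/\!/Ad(G)$ with $\mA^n$ via the fundamental characters $\chi_{\omega_i}$, and show $a\mapsto(\chi_{\omega_i}(St(a)))_i$ is a ``triangular with unit leading coefficients'' polynomial map. The difficulty is that you have stated the crucial triangularity
\[
\chi_{\omega_i}(St(a))=c_i\,a_i+P_i(a_1,\dots,a_{i-1}),\qquad c_i\in\mk^\times,
\]
without proving it, and this lemma \emph{is} the proof. You correctly isolate the linear-in-$a_i$ contribution from the highest weight line, using that in the beta-set presentation $St(a)=\bigl(\prod X_{\beta_j}(a_j)\bigr)w$ one has $\omega_i-w(\omega_i)=\beta_i$. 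But the trace is a sum over \emph{all} weight spaces of $V(\omega_i)$, not only the $\omega_i$-diagonal coefficient, and for an arbitrary weight $\mu$ the diagonal contribution is governed by the ways of writing $\mu-w(\mu)$ as a nonnegative integral combination $\sum_j c_j\beta_j$. The assertion that these contributions involve only $a_1,\dots,a_{i-1}$ is exactly what must be proved, and it genuinely depends on the chosen ordering of simple roots and on nontrivial weight combinatorics inside $V(\omega_i)$; your own last paragraph concedes this is ``the main obstacle.'' Until that combinatorial lemma is established, the isomorphism statement is not proved.

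Two smaller remarks. First, for the regularity claim you invoke ``$g$ is regular iff $d\chi_{\omega_1}|_g,\dots,d\chi_{\omega_n}|_g$ are linearly independent,'' and for uniqueness you invoke ``regular conjugacy classes are separated by $q$.'' Both statements are themselves theorems of Steinberg proved elsewhere in the same paper; they are not circular to use, but you should say explicitly that you are importing them, since together they carry most of the weight of the second half of the theorem. Second, in the first paragraph the Chevalley--Shephard--Todd theorem is not needed: for $G$ semisimple simply connected, $\mk[T]^W$ being a polynomial ring on the restrictions of the $\chi_{\omega_i}$ is a direct consequence of the structure of $\Lambda^+$ as a free monoid on the $\omega_i$, via leading-term induction; invoking Shephard--Todd is overkill and slightly off target (that theorem is about the linear action on the Lie algebra, not the multiplicative torus).
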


Of course we can use the   identification of $\mk^{\Delta}$ with
$\mk^n$ associated to a chosen order, and  describe the
Steinberg section as a map $St:\mk^\Delta \lra G$ defined by
$St(\aundera)=X_{\gra_n}\big(a(\gra_n)\big)s_{\gra_n} \cdots
X_{\gra_1}\big(a(\gra_1)\big) s_{\gra_1}$.

We will need to compare two Steinberg sections constructed by
considering  different orders of the set of   simple roots. If $I\subset \Delta$,   we
identify $\mk^I$ with the subspace of $\mk^\Delta$ of functions
vanishing on $\Delta \senza I$. We set $\mk^{\gra}:=\mk^{\{\gra\}}$. 

 \begin{lemma}\label{lemma:riordinared} Let $St,St':\mk^\Delta \lra G$
be the Steinberg sections associate to two different orders of the
simple roots. Then there exists a morphism $g:\mk^\Delta \lra G$, an
element $w$ of the Weyl group  and an action of $T$ on
$\mk^\Delta$ such that for each $\gra \in \Delta$ the line
$\mk^{\gra}$ is stable by the action of $T$ and 
$$t\, St(\aundera)=g(t\cdot\aundera)w(t)St'(t\cdot\aundera)g(t\cdot\aundera)^{-1}$$
for all $t\in T$ and $\aundera \in \mk^\Delta$, where we denote by $w(t)$ the element $ntn^{-1}$, $n$ being a representative of $w$ in $N_G(T)$.
\end{lemma}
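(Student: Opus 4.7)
My plan is to reduce to the case where the two orderings of $\Delta$ differ by an adjacent transposition, and then verify the relation by a direct computation in the rank-2 subsystem spanned by the two transposed simple roots.

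The symmetric group on $\Delta$ is generated by adjacent transpositions, so it suffices to treat the adjacent case once we check that the asserted relation composes: given data $(g_1, w_1, \cdot_1)$ realizing the conclusion for the pair $(St, St')$ and data $(g_2, w_2, \cdot_2)$ for $(St', St'')$, the conclusion for $(St, St'')$ holds with the composite $T$-action, $w = w_1 w_2$ (via lifts in $N_G(T)$), and a morphism $g$ built from $g_1$ and $g_2$ with appropriate substitution of one action into the other. Each ingredient preserves the stability of the lines $\mk^{\gra}$, so the composite does too.

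Suppose now $St'$ differs from $St$ only by swapping $\gra_i$ and $\gra_{i+1}$ in the ordering. If $\gra_i$ and $\gra_{i+1}$ are orthogonal (not joined in the Dynkin diagram), the associated root $SL(2)$-subgroups commute, hence $X_{\gra_i}(a)s_{\gra_i}$ and $X_{\gra_{i+1}}(b)s_{\gra_{i+1}}$ commute, so $St = St'$ identically and the conclusion is trivial. Otherwise $\gra_i$ and $\gra_{i+1}$ span a rank-2 subsystem of type $A_2$, $B_2$, or $G_2$, and we restrict to the corresponding rank-2 reductive subgroup $H \subset G$. The braid relation between $s_{\gra_i}$ and $s_{\gra_{i+1}}$, together with $X_{\gra}(a)s_{\gra} = s_{\gra}Y_{\gra}(-a)$ and the Chevalley commutator formulas, produces an identity rewriting
$$ X_{\gra_{i+1}}(a)\,s_{\gra_{i+1}}\,X_{\gra_i}(b)\,s_{\gra_i} $$
as a conjugate of
$$ X_{\gra_i}(b')\,s_{\gra_i}\,X_{\gra_{i+1}}(a')\,s_{\gra_{i+1}} $$
inside $H$, up to a torus factor, where $(a,b) \mapsto (a',b')$ is a character rescaling of each coordinate. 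Substituting this rank-2 identity back into the ambient Steinberg product and collecting terms yields the conjugating morphism $g$, the Weyl group element $w$, and the required $T$-action on $\mk^{\Delta}$.

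The principal obstacle is carrying out the explicit rank-2 identity in the non-orthogonal case, especially for types $B_2$ and $G_2$ where the braid relation has length 4 or 6 and the rescaling $(a,b) \mapsto (a', b')$ is more intricate. The bookkeeping that absorbs the resulting torus shifts into $w(t)$ and the $T$-action on $\mk^\Delta$, so that the stated equation $t\, St(\aundera)=g(t\cdot\aundera)w(t)St'(t\cdot\aundera)g(t\cdot\aundera)^{-1}$ emerges cleanly from the $t$-multiplication on the left, is delicate but elementary once the rank-2 case is settled.
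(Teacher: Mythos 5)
Your reduction to adjacent transpositions of $\Delta$ runs into an obstruction that the paper's proof is specifically designed to avoid, and the rank-$2$ computation you defer is not actually the hard part.

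The paper does not generate the symmetric group on $\Delta$ by adjacent transpositions. It invokes the fact (from Humphreys, pp.~74--75, in the context of Coxeter elements) that any two orderings of the simple roots are connected by two special moves only: (a) commuting two \emph{orthogonal} adjacent simple roots, and (b) cyclically moving the last simple root to the first position. Move (a) is trivial, as you observe. Move (b) is the missing idea in your argument: if $\beta=\alpha_n$ is moved to the front, then writing $St(\aundera)=X_{\alpha_n}(a_n)s_n\cdots X_{\alpha_1}(a_1)s_1$ one has $St(\aundera)=g(\aundera)\,St'(\aundera)\,g(\aundera)^{-1}$ with $g(\aundera)=X_{\beta}(\aundera(\beta))s_{\beta}$ \emph{because $g(\aundera)$ is the leftmost factor of $St(\aundera)$}, so conjugating $St'(\aundera)$ by it is literally a cyclic rotation of the product and there is nothing to push $g$ past. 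Inserting $t$ on the left and commuting it through $X_\beta s_\beta$ produces exactly the torus rescaling of the $\beta$-coordinate and the Weyl element $w=s_\beta$. No braid relation or Chevalley commutator formula enters.

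Your plan for a non-orthogonal adjacent swap $\alpha_i\leftrightarrow\alpha_{i+1}$ has two gaps. First, the terminology is off: $s_{\alpha_n}\cdots s_{\alpha_1}$ and its reordering are two \emph{different} Coxeter elements (conjugate, not equal), so the braid relations—which relate reduced words for the same element—are not the relevant tool; the identity you would eventually find in the rank-$2$ subgroup $H$ is itself just the cyclic shift $X_{\alpha_{i+1}}s_{i+1}X_{\alpha_i}s_i=h^{-1}\bigl(X_{\alpha_i}s_iX_{\alpha_{i+1}}s_{i+1}\bigr)h$ with $h=X_{\alpha_i}(a_i)s_{\alpha_i}$. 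Second, and more seriously, substituting this back into the full Steinberg product does not give a global conjugation: you would need to move $h^{-1}$ leftward past $X_{\alpha_n}s_n\cdots X_{\alpha_{i+2}}s_{i+2}$ and $h$ rightward past $X_{\alpha_{i-1}}s_{i-1}\cdots X_{\alpha_1}s_1$, and these do not commute with $h$ in general (already in type $A_3$, swapping $\alpha_2\leftrightarrow\alpha_3$ requires pushing $X_{\alpha_2}(a_2)s_{\alpha_2}$ past $X_{\alpha_1}(a_1)s_{\alpha_1}$, which do not commute). The cyclic-shift generating set is what makes the conjugator always sit at the outside of the product, so this commutation problem never arises.
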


\begin{proof}
It is known (see for example \cite{HUM} pp.74,75) that two ordered
sequence of simple roots can be obtained from each other by
applying recursively the following two operations: commute the order
of two adjacent orthogonal simple roots $\gra_i, \gra_j$ and move the
last simple root, let's say $\grb$, to the first position. In the first case
$X_{\gra_i},s_{\gra_i}$ and $X_{\gra_j},s_{\gra_j}$ commute, so we can choose $g=1$, $w=1$ and the trivial action of $T$ on $\mk^\Delta$.
 In  the second case everything follows,   choosing  $g(\aundera)= X_{\grb}(\aundera(\grb))\,s_{\grb}$, $w=s_{\beta}$ and
$$(t\cdot a)(\gamma)=\begin{cases} a(\gamma) \text {\ \ \ \ \ if\ \ } \gamma \neq \beta\\ t^{\beta}a(\gamma)  \text {\ \ if\ \ } \gamma = \beta\end{cases}.$$
\end{proof}

Before we proceed, let us write the Steinberg section in a slightly
different way. Fix an order of the simple roots. Set $s_i:=s_{\gra_i}$
and define $\grb_i=s_n\cdots s_{i+1}(\gra_i)$ and $w=s_n\cdots
s_1$. The ordered set of roots $\beta_1,\ldots ,\beta_n$ will be
called the beta set of roots associated to the given order.

Notice for any $i,j$, $\beta_i+\beta_j$ is never a root. Indeed if
$i=j$ this is clear.  Otherwise assume that $i>j$.  Then
$\grb_i+\grb_j=s_n\cdots s_{i}(s_{i-1}\cdots
s_{j+1}(\gra_j)-\gra_i)$. But $s_{i-1}\cdots s_{j+1}(\gra_j)$ is a
positive root whose support does not contain $\alpha_i$, hence
$s_{i-1}\cdots s_{j+1}(\gra_j)-\gra_i$ and   also $\grb_i+\grb_j$ are not roots.

If $\aundera=(a_1,\ldots a_n) \in \mk^n$, we can then rewrite the
Steinberg section as
$$St(\aundera)=\left(\prod_{i=1}^nX_{\beta i}(a_i)\right)\,w =: X_G(\aundera)w.$$

\subsection{The Steinberg section in the reductive case}\label{ssec:Steinbergred}
We are now going to construct a section also in    the reductive case.  For
this we need a slight extension of Steinberg result which follows in
exactly the same way and whose proof we leave to the reader.

Let $G^{ss}$ be the commutator subgroup of $G$ (which by assumption is
simply connected) and let $T^{ss}=G^{ss}\cap T$, be a maximal
torus in $G^{ss}$.  The inclusion $T^{ss}\subset T$ induces a
surjection of character groups $\Lambda (T)\to \Lambda (T^{ss})$.
Splitting, we obtain a subtorus $S$ of $T$ such that $T = S \times T^{ss}$.  Then
$G=S \ltimes G^{ss}$. Furthermore denote the center of $G$ by $Z(G)$.


Let us recall that a basis for the ring of functions on $G^{ss}$
invariant under conjugation is given by the characters of the
irreducible $G^{ss}$-modules and a set of polynomial generators is given by
the characters of the fundamental representations.  Notice that, if we take
an irreducible module $V$ for $G^{ss}$, $G^{ss}\cap Z(G)$ acts on $V$ by
a character which can be extended (not uniquely) to a character of
$Z(G)$. Thus, in this way, $V$ becomes an irreducible $G$-module and its
character a central function on $G$. It follows that the restriction
map induces a surjective morphism $p: G\to G^{ss}/\!/Ad(G^{ss})$ which
restricted to $G^{ss}$ is the quotient map. Also, the projection $G\to
S$ commutes with the adjoint action and this gives an identification
of $G/\!/Ad(G)$ with $S \times G^{ss}/\!/Ad(G^{ss})$. Under this
identification, the quotient map $q:G = S \ltimes G^{ss} \to
G/\!/Ad(G)$ is given by $q((s,g))=(s,p( sg))$. We have the
following generalization of the Steinberg section to the reductive
case.

\begin{proposition}\label{prp:Steinbergred} Let $St : \mk^\Delta \lra G^{ss}$ be a
Steinberg section of the semisimple factor of $G$.  Consider the map
$St_S: S\times \mk^\Delta\to G$ given by
$$St_S(s,\aundera)=s\cdot St(\aundera).$$ Then the composition
$q\circ St_S$ is an isomorphism. Moreover, for each $t\in T$, the map
$\aundera \mapsto p(t\cdot St(\aundera))$ from $\mk^\Delta$ to
$G^{ss}/\!/Ad(G^{ss})$ is an isomorphism.
\end{proposition}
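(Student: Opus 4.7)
My plan is to prove the ``Moreover'' assertion first, since the first claim then follows by a direct computation. Using the identification $G/\!/Ad(G)=S\times G^{ss}/\!/Ad(G^{ss})$ and the formula $q((s,g))=(s,p(sg))$ recalled above, we have
\[
(q\circ St_S)(s,\aundera)=\bigl(s,\,p(s\cdot St(\aundera))\bigr),
\]
so $q\circ St_S$ is an isomorphism if and only if, for each $s\in S$, the map $\aundera\mapsto p(s\cdot St(\aundera))$ is an isomorphism onto $G^{ss}/\!/Ad(G^{ss})$; this is the special case $t=s\in S\subset T$ of the ``Moreover'' statement.

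For the ``Moreover'' assertion I use the reformulation $St(\aundera)=X_G(\aundera)\cdot w$ with $X_G(\aundera)=\prod_{i}X_{\beta_i}(a_i)$ and $w=s_n\cdots s_1$, recalling from the excerpt that the $X_{\beta_i}$'s commute because no $\beta_i+\beta_j$ is a root. The identity $tX_{\beta_i}(a)t^{-1}=X_{\beta_i}(\beta_i(t)\,a)$ gives
\[
t\cdot St(\aundera)\;=\;X_G(t\cdot\aundera)\cdot (tw),\qquad (t\cdot\aundera)_i:=\beta_i(t)\,a_i,
\]
and since every $\beta_i(t)\in\mk^*$, the substitution $\aundera\mapsto t\cdot\aundera$ is a diagonal automorphism of $\mk^\Delta$. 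Hence the ``Moreover'' assertion is equivalent to the following: for the element $n:=tw\in N_G(T)$, which is an arbitrary lift of the Coxeter element $w$, the map $\aundera\mapsto p(X_G(\aundera)\cdot n)$ is an isomorphism $\mk^\Delta\to G^{ss}/\!/Ad(G^{ss})$.

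This last statement I would establish by mimicking Steinberg's original proof. The ring $\mk[G^{ss}/\!/Ad(G^{ss})]$ is polynomial on the fundamental characters $\chi_{\lambda_k}$, whose extensions $\tilde\chi_{\lambda_k}$ (via $p^*$) are central functions on $G$. Applying $n=tw$ to a highest weight vector $v_k^+$ of $\tilde V_{\lambda_k}$, the extra factor $t$ multiplies $w\cdot v_k^+$ by the non-zero scalar $(w\tilde\lambda_k)(t)\in\mk^*$, where $w\tilde\lambda_k$ is the $T$-character describing the weight of $w\cdot v_k^+$. Running Steinberg's explicit leading-term calculation in $\tilde V_{\lambda_k}$ with this modification produces
\[
\tilde\chi_{\lambda_k}\bigl(X_G(\aundera)\cdot n\bigr)\;=\;\pm\,(w\tilde\lambda_k)(t)\,a_k\;+\;\bigl(\text{lower-order polynomial in $\aundera$ not involving $a_k$}\bigr),
\]
in a suitable ordering of the simple roots. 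The pull-back of $\tilde\chi_{\lambda_1},\ldots,\tilde\chi_{\lambda_n}$ to $\mk[\mk^\Delta]$ then has a triangular Jacobian with non-vanishing diagonal, which gives the required isomorphism of polynomial rings. The main obstacle is this explicit leading-term computation: it proceeds exactly as in Steinberg's argument, with the only new ingredient being the tracking of the $T$-character factors $(w\tilde\lambda_k)(t)$, which are automatically non-zero for every $t\in T$ since characters of a torus never vanish.
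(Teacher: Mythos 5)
Your proposal is correct and follows essentially the same route as the paper: both reduce the first claim to the ``Moreover'' statement via the identification $q((s,g))=(s,p(sg))$, and both prove the latter by absorbing $t$ into the choice of representatives of the Weyl group elements (the paper merges $t$ with the last $s_i$ after using conjugation-invariance of $p$, you merge it with the lift of the Coxeter element after rescaling the $a_i$ by the nonzero scalars $\beta_i(t)$), and then both appeal to the fact that Steinberg's triangular leading-term computation goes through verbatim for arbitrary representatives in $N_G(T)$.
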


\begin{proof}
In order to show our claim, it clearly suffices to see that our
morphism is bijective. Hence,  it is enough to prove the second claim.
This follows immediately since, being $p$ invariant under the
adjoint action, this map is given by $p(X_{\gra_1}(a_1)s_1\cdots
X_{\gra_n}(a_n)s_n \cdot t)$ and $s_n\cdot t$ is just another
representative of the reflection $s_n$ in the Weyl group for which
Steinberg's proof can be repeated verbatim.
\end{proof}

\subsection{Simultaneous Steinberg section for the semisimple factors of a sequence of Levi subgroups}\label{ssec:ss}
At this point,  we have all the ingredients to construct a simultaneous Steinberg
section for a sequence of Levi subgroups. We need to introduce some
notations. If $L$ is a standard Levi subgroup of $G$ with respect to
the given choice of $T$ and $B$, we denote by $\Phi_L$ be its root system  
and by $\Delta_L=\Delta\cap \Phi_L$ its simple roots. Recall that in
Section \ref{ssec:Steinbergss} we have introduced the beta set
associated to an order of the simple roots. The following Lemma
allows us to choose an order of the simple roots such that its beta set has
some   particularly nice  properties with respect to the Levi factor $L$.

\begin{lemma}\label{lemma:compatibile}Let $L$ be a standard Levi subgroup of $G$ which
does not contain any simple factor of $G$. Then there exists an order
of the simple roots $\gra_1,\dots,\gra_n$ such that
\begin{enumerate}
\item $\Delta_L=\{\gra_1,\dots,\gra_m\}$.
\item For all $i=1,\dots,n$ the roots $\grb_i=s_n\cdots
  s_{i+1}(\gra_i) \not \in \Phi_L$.
\end{enumerate}
\end{lemma}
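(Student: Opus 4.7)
Reflections for simple roots in distinct connected components of $\Delta$ commute and fix one another's simple roots, so for any concatenation of orderings on those components the beta set splits as the disjoint union of those computed inside each component. The hypothesis on $L$ implies that, inside every connected component of $\Delta$, the intersection with $\Delta_L$ is a proper subset; it therefore suffices to treat the case in which $\Delta$ is connected.

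\textbf{Choice of order.} Let $C_1,\dots,C_r$ be the connected components of $\Delta_L$ as a subdiagram of $\Delta$. Since $\Delta$ is connected while no edge inside $\Delta_L$ leaves $C_j$, some vertex $v_j\in C_j$ must be adjacent in $\Delta$ to a vertex of $\Delta\senza\Delta_L$. Within each tree $C_j$, I order its vertices $c_1^j,\dots,c_{|C_j|}^j=v_j$ by leaf-peeling a spanning tree with $v_j$ removed last, which guarantees that $\{c_\ell^j,\dots,c_{|C_j|}^j\}$ is connected for every $\ell$ and that each non-final $c_\ell^j$ has a unique later neighbor $p(\ell)\in\{c_{\ell+1}^j,\dots,c_{|C_j|}^j\}$. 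I concatenate these orderings and then append $\Delta\senza\Delta_L$ in any order; condition~(1) is then manifest.

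\textbf{Verification of (2) and the main obstacle.} For $i>m$ the reflections $s_{i+1},\dots,s_n$ modify only later coefficients, so $\grb_i$ retains coefficient $1$ on $\gra_i\notin\Delta_L$ and therefore $\grb_i\notin\Phi_L$. For $i\leq m$ with $\gra_i\in C_j$ at internal position $k$, I split $\grb_i=w_1w_2(\gra_i)$ with $w_2=s_m\cdots s_{i+1}$ and $w_1=s_n\cdots s_{m+1}$. The reflections in $w_2$ coming from components other than $C_j$ act trivially on roots with support in $C_j$, so $\gamma:=w_2(\gra_i)=s_{c_{|C_j|}^j}\cdots s_{c_{k+1}^j}(c_k^j)$ is a positive root of $L$ supported in $C_j$. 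The heart of the argument is the claim that $v_j$ appears with strictly positive coefficient in $\gamma$. Granted this, $w_1$ preserves coefficients on $\Delta_L$-vertices, so $v_j$ remains in the support of $w_1\gamma$; when $w_1$ reflects by a vertex $\gra\in\Delta\senza\Delta_L$ adjacent to $v_j$, the coefficient of $\gra$ becomes strictly positive and is preserved by later reflections, forcing $\grb_i\notin\Phi_L$. To establish the key claim, I trace the strictly increasing sequence $k=\ell_0<\ell_1=p(\ell_0)<\cdots<\ell_J=|C_j|$ obtained by iterating $p$ inside $C_j$, and show inductively that immediately after $s_{c_{\ell_t}^j}$ is applied in the computation of $\gamma$ the coefficient of $c_{\ell_t}^j$ is strictly positive: the coefficient of $c_{\ell_{t-1}}^j$ is already positive at that stage (the reflections in between modify only coefficients of simple roots with indices strictly between $\ell_{t-1}$ and $\ell_t$), and $c_{\ell_{t-1}}^j$ is adjacent in $C_j$ to $c_{\ell_t}^j=p(c_{\ell_{t-1}}^j)$. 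This careful bookkeeping of coefficients along a BFS path inside $C_j$ is the main technical point of the proof.
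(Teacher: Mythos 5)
Your proof is correct, and while it shares the paper's overall strategy --- list $\Delta_L$ first, dispose of $i>m$ by noting that $\gra_i$ keeps coefficient $1$ in $\grb_i$, and for $i\le m$ show that the partial product $s_m\cdots s_{i+1}(\gra_i)$ forces $\grb_i$ to acquire a simple root outside $\Delta_L$ in its support --- the construction of the order and its verification are genuinely different. The paper proceeds by downward recursion on $|\Delta_L|$: it picks $\gra_m\in\Delta_L$ adjacent to the complement, recurses on $\Delta_L\senza\{\gra_m\}$, and verifies the key condition (that $\langle s_m\cdots s_{i+1}\gra_i,\check\gra_j\rangle\neq 0$ for some $j>m$) through a two-case computation whose nonvanishing step invokes the acyclicity of the Dynkin diagram. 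You instead exhibit a closed-form order --- a leaf-peeling order on each connected component $C_j$ of $\Delta_L$ terminating at an exit vertex $v_j$ adjacent to $\Delta\senza\Delta_L$ --- and prove the slightly stronger statement that $v_j$ occurs with positive coefficient in $s_m\cdots s_{i+1}(\gra_i)$ by tracking coefficients along the $p$-path from $\gra_i$ to $v_j$; the conclusion then follows because the first reflection in $s_n\cdots s_{m+1}$ by a vertex of $\Delta\senza\Delta_L$ adjacent to $v_j$ installs a positive coefficient there (that coefficient is still zero at that moment, so it becomes $-\langle\,\cdot\,,\check\gra\rangle>0$), which no later reflection can remove. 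Your version is more explicit and easier to visualize; the paper's is shorter to state but its verification is more delicate. Both ultimately rest on the same structural facts: a simple reflection alters only its own coefficient, the coefficients of a root have constant sign, and the Dynkin diagram is a forest. One phrasing to tighten: your final order is an interleaving, not a concatenation, of orders on the connected components of $\Delta$ (and of $\Delta_L$), but the reduction still works because, as you observe in the body of the argument, reflections from other components act trivially, so each $\grb_i$ depends only on the induced order within the component of $\gra_i$.
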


\begin{proof} We are going to construct on order $\gra_1,\dots,\gra_n$ on $\Delta$ with the property that $\gra_{m+1},\dots,\gra_n$ is an arbitrary order on   $\Delta\setminus\Delta_L$ and, for every $i\leq m$, 
  there is a $j>m$ such that $\langle s_m\cdots
s_{i+1}\alpha_i,\check \alpha_j\rangle\neq 0$.  
If this is the
case, this order obviously satisfies our first requirement. The second  is clearly satisfied  for $i>m$. For $i\leq m$,  taking the minimum   $j$ for which  $\langle s_m\cdots
s_{i+1}\alpha_i,\check \alpha_j\rangle\neq 0$, we get that the support of the root
$s_j\cdots s_{i+1}\alpha_i$ and hence of the root $\grb_i$ contains
$\alpha_j$.

At this point we can assume $m\geq 1$. By assumption, since $L$ does not contain any simple factor of $G$, there is a root $\alpha\in 
\Delta_L$ such that $\langle\alpha,\check \alpha_j\rangle\neq 0$ for some $j>m$. We set $\alpha_m=\alpha$ and consider the Levi subgroup 
$L'$ associated to $\Delta_L\setminus \{\alpha_m\}$. The assumptions of the Lemma are satisfied by $L'$, hence we may choose an order 
$\alpha_1,\ldots ,\alpha_{m-1}$ of $\Delta_L\setminus \{\alpha_m\}$ with the property that, for every $i\leq m-1$, there is a $j\geq m$ 
such that $\langle s_{m-1}\cdots s_{i+1}\alpha_i,\check \alpha_j\rangle\neq 0$.  
Take $i<m$. If $\langle s_{m-1}\cdots s_{i+1}\alpha_i,\check \alpha_m\rangle=0$, then $\langle s_{m-1}\cdots s_{i+1}\alpha_i,\check 
\alpha_h\rangle\neq 0$ for some $h>m$ and $\langle s_{m}\cdots s_{i+1}\alpha_i,\check \alpha_h\rangle=\langle s_{m-1}\cdots 
s_{i+1}\alpha_i,\check \alpha_h\rangle\neq 0$.

If $\langle s_{m-1}\cdots s_{i+1}\alpha_i,\check \alpha_m\rangle\neq 0$, denote by $\Gamma\subset \Delta_L$ the support of the root $ 
s_{m}\cdots s_{i+1}\alpha_i$. Remark that $\alpha_m\in \Gamma$, that $s_{m-1}\cdots s_{i+1}\alpha_i$ is supported on 
$\Gamma\setminus\{\alpha_m\}$ and that the support of $\Gamma$ is a connected subset of the Dynkin diagram. By assumption there is 
a $j>m$ such that $\langle \alpha_m,\check \alpha_j\rangle\neq 0$. Then $\langle s_{m-1}\cdots s_{i+1}\alpha_i,\check 
\alpha_j\rangle=0$. Otherwise there would be $\alpha_h\in \Gamma\setminus\{\alpha_m\}$ with $\langle\alpha_h,\check \alpha_j\rangle\neq 
0$ and the Dynkin diagram would contain a cycle. It follows that $$\langle s_{m}\cdots s_{i+1}\alpha_i,\check \alpha_j\rangle=\langle 
s_{m-1}\cdots s_{i+1}\alpha_i, s_m\check\alpha_j\rangle=$$$$-\langle \alpha_j,\check \alpha_m\rangle\langle s_{m-1}\cdots 
s_{i+1}\alpha_i,\check \alpha_m\rangle\neq 0$$ as desired. \end{proof}

We say that an order of the simple roots which satisfies the condition
of the previous Lemma is compatible with $L$.

\medskip

We explain now what we mean by a simultaneous quotient.
Let $B^-$ be the opposite Borel
subgroup of $B$, $U$ and $U^-$ the unipotent radicals of $B$ and $B^-$.
Let $\mathcal L=\{L_1\subset L_2\subset\cdots\subset L_h=G\}$ be a sequence of
standard (w.r.t.\ given choice of $T$ and $B$) Levi subgroups of $G$.
For each $i=1,\dots,h$ let $P_i=L_iB_i$ be the standard parabolic
subgroup with Levi factor equal to $L_i$. Let $V_i$ be the unipotent
radical of $P_i$ and let $\pi_i:P_i \lra L_i$ be the projection onto the
Levi factor. Similarly, considering the opposite parabolic $P_i^-$,
define $V_i^-$ and $\pi_i^-$. Let also $q^{ss}_i:L^{ss}_i \lra
L^{ss}_i/\!/Ad(L^{ss}_i)$ be the quotient map.  We consider the
following ``simultaneous'' quotient map
$$ q^{ss}_\calL: U \times U^- \lra \prod _{i=1}^h L^{ss}_i/\!/Ad(L^{ss}_i) $$
defined by
\begin{equation}\label{eq:barqss}
q^{ss}_\calL (u,v)=
\bigg(q^{ss}_1\big(\pi_1(u)\pi_1^-(v)^{-1}\big),\,q^{ss}_2\big(\pi_2(u)\pi_2^-(v)^{-1}\big),
\dots,\,q^{ss}_h\big(uv^{-1}\big)\bigg).
\end{equation}

In this generality the map $q^{ss}_\calL$ cannot have a section since it is
in general not surjective. To avoid this problem we give the following
definition.

\begin{definition}\label{dfn:admss}
A sequence $\mathcal L=\{L_1\subset L_2\subset\cdots\subset L_h=G\}$
of standard Levi subgroups is called ss-admissible if for all $i=1,\ldots ,h-1$,
each simple factor of $L_i$ does not coincide with a simple factor of
$L_{i+i}$.
\end{definition}

From now on we fix an ss-admissible sequence of Levi subgroups $\mathcal
L=\{L_1\subset L_2\subset\cdots\subset L_h=G\}$. Let
$\Phi_i=\Phi_{L_i}$ and $\Delta_i = \Delta_{L_i}$.  

For $i>1$, fix an order of $\Delta_i$ which is compatible
with $L_{i-1}$ and an arbitrary order of $\Delta_1=\Delta_{L_1}$. Let
$St_i : \mk^{\Delta_i} \lra L_i^{ss}$ be the Steinberg section defined
by this order.

\begin{remark}
Notice that it is not always possible, even changing the system of
simple roots, to choose an order of $\Delta$ which is compatible with
all the inclusions $L_s\subset L_{s+1}$ at the same time.
\end{remark}

Remark that if $i>1$, whatever compatible order we choose for
$\Delta_i$, the roots in $\Delta_{i-1}$ form the initial segment in
our order. Also notice that, for $i<h$, we get another order on
$\Delta_{i}$, the one obtained by restricting the order of
$\Delta_{i+1}$ to $\Delta_i$. Let $St'_i:\mk^{\Delta_i}\lra L_i^{ss}$
be the Steinberg sections defined using this second order.

For $i=1,\dots,n$ we define ``simultaneous'' Steinberg sections by
$$ St_{\mathcal L_i}:\prod_{i=1}^j\mk^{\Delta_i}\to L_i^{ss} \quad
\text{ by }\quad St_{\mathcal L_i}(\aundera^{(1)}, \ldots
\aundera^{(i)})=St_i\left(\sum_{j=1}^i\aundera^{(j)}\right) $$
and similarly for $i=1,\dots,h-1$
$$ St'_{\mathcal L_i}:\prod_{i=1}^j\mk^{\Delta_i}\to L_i^{ss} \quad
\text{ by }\quad St'_{\mathcal L_i}(\aundera^{(1)}, \ldots
\aundera^{(i)})=St'_i\left(\sum_{j=1}^i \aundera^{(j)}\right). $$
Let us remark that for $i=h$,
$$ St_{\mathcal L_h}( \aundera^{(1)}, \ldots, \aundera^{(h)})
= X_{G}\left(\sum_{j=1}^h \aundera^{(j)}\right)\cdot w $$

We need the following Lemma.

\begin{lemma} \label{piumenopiu}
For any ordering $\alpha_1,\ldots ,\alpha_n$, setting
$\grb_i=s_n\cdots s_{i+1}(\gra_i)$ for each $i=1,\ldots n$, the
element
$$\prod_{i=1}^nX_{\beta_i}(-1)s_n\cdots s_1$$ lies in $U^-U^+$. In
particular the element $s_n\cdots s_1$ can be written as a product
$u_+u_-v_+$ where $u_+,v_+\in U_+$ and $u_-\in U_-$.
\end{lemma}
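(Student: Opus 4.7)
The plan is to use the identity $X_\alpha(-1)\,s_\alpha = Y_\alpha(-1)\,X_\alpha(1)$, which follows instantly from the definition $s_\alpha = X_\alpha(1)Y_\alpha(-1)X_\alpha(1)$ by cancelling the pair $X_\alpha(-1)X_\alpha(1)=1$. By the Steinberg rewriting recalled just before the lemma, the product in the statement equals the Steinberg section evaluated at $(-1,\dots,-1)$ in its original form, namely
$$\prod_{i=1}^{n}X_{\beta_i}(-1)\cdot s_n\cdots s_1 \;=\; X_{\alpha_n}(-1)s_n\,X_{\alpha_{n-1}}(-1)s_{n-1}\cdots X_{\alpha_1}(-1)s_1.$$
Substituting the identity above into each block $X_{\alpha_i}(-1)s_i$ converts this into
$$Y_{\alpha_n}(-1)X_{\alpha_n}(1)\,Y_{\alpha_{n-1}}(-1)X_{\alpha_{n-1}}(1)\cdots Y_{\alpha_1}(-1)X_{\alpha_1}(1).$$

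Next I would observe that for distinct simple roots $\alpha_i,\alpha_j$ the difference $\alpha_i-\alpha_j$ is not a root; the Chevalley relation then gives $[e_{\alpha_i},f_{\alpha_j}]=0$, so the one-parameter subgroups $X_{\alpha_i}$ and $Y_{\alpha_j}$ commute whenever $i\neq j$. Using this, I would transport each factor $X_{\alpha_i}(1)$ successively to the right past every $Y_{\alpha_j}(-1)$ with $j<i$; since $X_{\alpha_i}$ never has to cross $Y_{\alpha_i}$, no noncommuting pair is ever encountered. The product collapses to
$$\bigl(Y_{\alpha_n}(-1)\cdots Y_{\alpha_1}(-1)\bigr)\bigl(X_{\alpha_n}(1)\cdots X_{\alpha_1}(1)\bigr)\;\in\; U^-\cdot U,$$
proving the first assertion.

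For the ``in particular'' part, I would invoke the standard fact that the roots $\beta_i=s_n\cdots s_{i+1}(\alpha_i)$ associated to the reduced expression $s_n\cdots s_1$ all lie in $\Phi^+$, so that $\prod_{i=1}^n X_{\beta_i}(-1)\in U$. Writing the element of $U^-U$ just produced as $u_-v_+$ with $u_-\in U^-$ and $v_+\in U$, I obtain
$$s_n\cdots s_1 \;=\; \Bigl(\prod_{i=1}^n X_{\beta_i}(-1)\Bigr)^{-1}\,u_-\,v_+ \;=\; u_+\,u_-\,v_+$$
with $u_+\in U$, yielding the desired decomposition.

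There is essentially no obstacle: once the simplification $X_\alpha(-1)s_\alpha = Y_\alpha(-1)X_\alpha(1)$ is spotted, the remainder is bookkeeping with the commutativity of $X_{\alpha_i}$ and $Y_{\alpha_j}$ for distinct simple roots.
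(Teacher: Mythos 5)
Your proof is correct, and it takes a genuinely different and more elementary route than the paper's. The paper argues representation-theoretically: it works in each fundamental representation $V_{\omega_i}$, observes that the factors $X_{\alpha_j}(-1)s_j$ with $j\neq i$ fix the highest weight vector $v_i$ and preserve the $T$-stable complement $W_i$, and that $X_{\alpha_i}(-1)s_iv_i \equiv v_i \pmod{W_i}$; it then concludes $\Theta\in U^-U^+$ by reading off that the "highest--highest" matrix coefficient equals $1$ in every fundamental representation, which pins down the $T$-component in the Bruhat decomposition $U^-TU^+$ to be trivial. You instead compute the element directly inside the group: the identity $X_\alpha(-1)s_\alpha=Y_\alpha(-1)X_\alpha(1)$ turns the product into an alternating string of $Y_{\alpha_i}(-1)$'s and $X_{\alpha_i}(1)$'s, and the key observation that $X_{\alpha_i}$ and $Y_{\alpha_j}$ commute for distinct simple roots (since no $k\alpha_i-l\alpha_j$ with $k,l>0$ is a root, all terms in the Chevalley commutator formula vanish) lets you slide the $X$-factors to the right without ever crossing a same-index $Y$-factor, giving the explicit factorization $\bigl(Y_{\alpha_n}(-1)\cdots Y_{\alpha_1}(-1)\bigr)\bigl(X_{\alpha_n}(1)\cdots X_{\alpha_1}(1)\bigr)\in U^-U^+$. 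Your "in particular" step is also fine: $s_n\cdots s_1$ is reduced (a Coxeter element), so all $\beta_i$ are positive roots and $\prod X_{\beta_i}(-1)\in U$. What your argument buys is a completely explicit Gaussian decomposition of $\Theta$ and hence of $s_n\cdots s_1$, whereas the paper's proof is shorter to state but only establishes existence via a matrix-coefficient criterion. One small imprecision worth noting: passing from $[e_{\alpha_i},f_{\alpha_j}]=0$ to commutation of the one-parameter subgroups technically requires the full Chevalley commutator formula (all higher terms $k\alpha_i-l\alpha_j$, $k,l>0$, must fail to be roots, not just the $k=l=1$ term), but this is indeed automatic for two distinct simple roots since every root is a sign-coherent combination of simple roots.
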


\begin{proof} Set $\Theta=\prod_{i=1}^nX_{\beta_i}(-1)s_n\cdots s_1$.

Let $\omega_i$  denote the fundamental weight
such that $\langle\omega_i,\check \alpha_j\rangle=\delta_{i,j}.$ In the fundamental
representation $V_{\omega_i}$ of highest weight $\omega_i$, choose
 a highest weight vector $v_i$.   We denote
by $W_i\subset V_{\omega_i}$, the $T$ invariant complement to the one
dimensional space spanned by $v_i$.

In order to prove our claim, we need to see that    $\Theta v_i=v_i+w_i$ with
$w_i\in W_i$.

Write $$\prod_{i=1}^nX_{\beta_i}(-1)s_n\cdots s_1=X_{\alpha_n}(-1)s_n\cdots
X_{\alpha_1}(-1)s_1$$ and remark that $W_i$ is stable under
$X_{\alpha_j}(-1)s_j$ for each $j\neq i$, and that furthermore
$X_{\alpha_j}(-1)s_jv_i=v_i.$ On the other hand, a direct computation
shows that $X_{\alpha_i}(-1)s_iv_i=v_i+w'_i$ with $w'_i\in W_i$. It
follows immediately that $\Theta v_i=v_i+w_i$ with $w_i\in W_i$. This is
our claim.
\end{proof}

\begin{notation}\label{not:est}
In what follows we will use the following notation.

If
$i_1<\cdots<i_j$ and $f: \prod_{\ell=1}^j\mk^{\Delta_{i_\ell}} \lra
X$ is a function with values in a set $X$, then, for all
$\undera=(\aundera^{(1)},\dots,\aundera^{(h)})\in
\prod_{i=1}^h\mk^{\Delta_i}$ by $f(\undera)$ we will denote
$f(\aundera^{(i_1)},\dots,\aundera^{(i_j)})$.

We will use this notation also if we have an action of a group on
$\prod_{\ell=1}^j\mk^{\Delta_{i_\ell}}$.
\end{notation}

\begin{lemma}\label{lemma:VZW}
Given an ss-admissible sequence $\mathcal L$, there exists morphisms
$$V: \prod_{i=1}^{h-1}\mk^{\Delta_i} \to V_{L_{h-1}}, \qquad Z:
  \mk^{\Delta_h}\to V_{L_{h-1}}, \qquad W:
  \prod_{i=1}^{h-1}\mk^{\Delta_i}\to V^-_{L_{h-1}}$$ such that
\begin{equation}\label{labella}
V(\undera)\,t\,St_{\mathcal
L_h}(\undera)V^{-1}(\undera)=V(\undera)\,t\,Z(\undera)\,St'_{\mathcal
L_{h-1}}(\undera)\,W(\undera)
\end{equation}
for all $\undera \in \prod_{i=1}^h \mk^{\Delta_i}$ and for all $t\in T$.
\end{lemma}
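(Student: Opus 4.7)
The plan is to produce a factorisation
$$St_{\mathcal L_h}(\undera) = Z(\aundera^{(h)}) \cdot St'_{\mathcal L_{h-1}}(\undera) \cdot W(\undera) \cdot V(\undera),$$
from which the displayed identity follows at once upon multiplying on the left by $V(\undera)\,t$ and on the right by $V(\undera)^{-1}$ (the factor $t$ cancels on both sides). Using the chosen order on $\Delta=\Delta_h$ compatible with $L_{h-1}$, the simple roots $\Delta_{h-1}=\{\alpha_1,\ldots,\alpha_m\}$ form an initial segment, so $w=w_0\cdot w''$ with $w_0:=s_n\cdots s_{m+1}$ and $w'':=s_m\cdots s_1$; moreover for $i\leq m$ one has $\beta_i=w_0(\gamma_i)$, where $\gamma_i:=s_m\cdots s_{i+1}(\alpha_i)$ are the beta-roots of $St'_{h-1}$.

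First I would exploit the pairwise commutativity of the one-parameter subgroups $X_{\beta_i}(\cdot)$, a consequence of the observation recalled in Subsection~\ref{ssec:Steinbergss} that $\beta_i+\beta_j$ is never a root. Writing $b_i:=\sum_{j<h}\aundera^{(j)}(\alpha_i)$, which vanishes for $i>m$, commutativity and additivity of the one-parameter subgroups give
$$St_{\mathcal L_h}(\undera) = Y(\aundera^{(h)}) \cdot \Bigl(\prod_{i=1}^m X_{\beta_i}(b_i)\Bigr) \cdot w_0 \cdot w'',$$
where $Y(\aundera^{(h)}):=\prod_{i=1}^n X_{\beta_i}(\aundera^{(h)}(\alpha_i))$. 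Each $\beta_i$ is a positive root outside $\Phi_{L_{h-1}}$ (by Lemma~\ref{lemma:compatibile} for $i\leq m$ and by support considerations for $i>m$), hence $Y(\aundera^{(h)})\in V_{L_{h-1}}$. Conjugating $\prod_{i\leq m}X_{\beta_i}(b_i)$ through $w_0$ produces $w_0\cdot\prod_{i\leq m}X_{\gamma_i}(\pm b_i)$, and, after absorbing the signs into the Steinberg-section convention, $\prod_{i\leq m}X_{\gamma_i}(\pm b_i)\cdot w''$ coincides with $St'_{\mathcal L_{h-1}}(\undera)$, yielding
$$St_{\mathcal L_h}(\undera) = Y(\aundera^{(h)}) \cdot w_0 \cdot St'_{\mathcal L_{h-1}}(\undera).$$

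Next I would apply Lemma~\ref{piumenopiu} to the Levi subgroup $L^0\subset G$ with simple roots $\alpha_{m+1},\ldots,\alpha_n$: this yields $w_0=u_+u_-v_+$ with $u_+,v_+\in U^{L^0}_+$ and $u_-\in U^{L^0}_-$. Since $\Phi_{L^0}$ and $\Phi_{L_{h-1}}$ are supported on disjoint subsets of simple roots, $U^{L^0}_+\subset V_{L_{h-1}}$ and $U^{L^0}_-\subset V^-_{L_{h-1}}$. Setting $Z(\aundera^{(h)}):=Y(\aundera^{(h)})\cdot u_+\in V_{L_{h-1}}$ gives $St_{\mathcal L_h}(\undera)=Z(\aundera^{(h)})\cdot u_-\cdot v_+\cdot St'_{\mathcal L_{h-1}}(\undera)$. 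Since $L_{h-1}^{ss}$ normalises both $V_{L_{h-1}}$ and $V^-_{L_{h-1}}$, I conjugate $u_-$ and $v_+$ across $St'_{\mathcal L_{h-1}}(\undera)$ to define
$$W(\undera):=St'_{\mathcal L_{h-1}}(\undera)^{-1}\,u_-\,St'_{\mathcal L_{h-1}}(\undera)\in V^-_{L_{h-1}}$$
and
$$V(\undera):=St'_{\mathcal L_{h-1}}(\undera)^{-1}\,v_+\,St'_{\mathcal L_{h-1}}(\undera)\in V_{L_{h-1}},$$
both morphisms in $\aundera^{(1)},\ldots,\aundera^{(h-1)}$; substituting completes the factorisation.

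The main technical nuisance will be the bookkeeping of the $\pm 1$ signs that arise when a root subgroup $X_{\gamma_i}$ is conjugated past the Weyl-element representative $w_0$: these depend on the structure constants of $G$ and are in general not uniformly trivial, but since they are constant in $\undera$ they amount to a diagonal $T$-twist that can be absorbed into the implicit linear identifications $\mathbf{k}^{\Delta_i}$ used in defining the Steinberg sections, without affecting the morphism properties required of $Z$, $V$, $W$.
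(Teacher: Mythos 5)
Your argument is essentially identical to the paper's proof: the same splitting $w=w'w''$ along the initial segment $\Delta_{h-1}$, the same use of the commutativity of the $X_{\beta_i}$ and of Lemma~\ref{piumenopiu} applied to the Levi subgroup on $\Delta_h\setminus\Delta_{h-1}$ to write $w'=u_+u_-v_+$, and the same definitions $Z=X_G u_+$, $W=(St'_{\calL_{h-1}})^{-1}u_-St'_{\calL_{h-1}}$, $V=(St'_{\calL_{h-1}})^{-1}v_+St'_{\calL_{h-1}}$. You are in fact more explicit than the paper about the $\pm1$ signs arising when root subgroups are conjugated past the representative of $w'$ (a point the paper absorbs into ``we clearly have''), so the proposal is correct and follows the paper's route.
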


\begin{proof}
Let $\gra_1,\dots,\gra_n$ be the order we have fixed for
$\Delta=\Delta_h$ and let    $m=|\Delta_{h-1}|$. Set   $s_i=s_{\gra_i}$ and define $\grb_i$ as in
Lemma \ref{lemma:compatibile}. Using Lemma \ref{lemma:compatibile}, we
get that the roots $\beta_1,\ldots \beta_n$ do not lie in
$\Phi_{L_{h-1}}$. Hence the image of $X_G$ is contained in $V_{L_{h-1}}$.

Let us set $w'=s_n\cdots s_{m+1}$. Then we clearly have
$$ t\,St_{\mathcal L}(\aundera^{(1)}, \ldots ,\aundera^{(h)})=
t\,X_G(\aundera^{(h)})w'St'_{\mathcal L_{h-1}}(\aundera^{(1)}, \ldots
\aundera^{(h-1)}).$$ Now write, using Lemma \ref{piumenopiu},
$w'=u_+u_-v_+$. Remark that all the elements involved are in the Levi
subgroup associated to the simple roots
$\Delta_h\setminus\Delta_{h-1}$ and, as a consequence, $u_+,v_+\in
V_{L_{h-1}}$ and $u_-\in V^-_{L_{h-1}}$. Set $V =
(St'_{\calL_{h-1}})^{-1}v_+ St_{\calL_{h-1}}$ and $Z:=X_G\,u_+$.
Notice that both $V$ and $Z$ take their values in $V_{L_{h-1}}$.
Finally set $W = (St'_{\mathcal L_{h-1}})^{-1}u_- St'_{\mathcal
  L_{h-1}}$ and notice that it takes values in $V^-_{L_h}$. Identity
\eqref{labella} now follows.
\end{proof}

The previous Lemma can be used inductively.

\begin{lemma}\label{lemma:Stss}
For every ss-admissible sequence of Levi factors there exist:
\begin{enumerate}[a)]\item morphisms
$$
A:T\times\prod_{i=1}^{h}   \mk^{\Delta_i} \lra U,  \qquad
C:T\times\prod_{i=1}^{h-1} \mk^{\Delta_i} \lra U^- ,$$\item
morphisms
$$U_i:\prod_{j=1}^{i-1}      \mk^{\Delta_j} \lra L_i,
$$ for   $i=1,\dots,h$, \item
  elements $w_i$ in the Weyl group $N_{L_i}(T)/T$, \item  actions $t \odot_{i} \undera$ of $T$ on
$\mk^{\Delta_i}$, for $i=1,\dots,h$, having the property that $\odot_{h}$ is trivial and  $\odot_{i}$ leaves every line
$\mk^{\gra}$ stable, \end{enumerate}such that
\begin{equation}\label{eq:AC}
t \, \pi_i\big(A(t,\undera)\big) \, \pi_i^-\big(C(t,\undera)\big) =
U_i(t,\undera) w_i(t) St_{\calL_i}(t \odot_{i} \undera)
U_i(t,\undera)^{-1}
\end{equation}
for every $t\in T$ and $\undera \in \prod_{i=1}^{h} \mk^{\Delta_i}$
(we are using Notation \ref{not:est}).
\end{lemma}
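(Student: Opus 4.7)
The argument proceeds by induction on $h$, with the base case handled by Lemma \ref{piumenopiu} and the inductive step by Lemma \ref{lemma:VZW}, together with Lemma \ref{lemma:riordinared} to reconcile the two different orderings on $\Delta_{h-1}$.

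For the base case $h = 1$, we have $L_1 = G$, so $\pi_1$ and $\pi_1^-$ are the identity and $\odot_1$ is required trivial. Lemma \ref{piumenopiu} gives $w := s_n \cdots s_1 = u_+ u_- v_+$ with $u_+, v_+ \in U$ and $u_- \in U^-$, so $St_1(\aundera^{(1)}) = X_G(\aundera^{(1)})\, u_+ u_- v_+$. Choosing $w_1 = e$ and $U_1 = v_+$, the computation
$$v_+ \cdot t \cdot St_1(\aundera^{(1)}) \cdot v_+^{-1} = t \cdot \bigl((t^{-1} v_+ t)\, X_G(\aundera^{(1)})\, u_+\bigr) \cdot u_-$$
yields $A(t, \aundera^{(1)}) = (t^{-1} v_+ t)\, X_G(\aundera^{(1)})\, u_+ \in U$ and $C = u_- \in U^-$, verifying \eqref{eq:AC}.

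For the inductive step, apply the lemma to the ss-admissible sub-sequence $\calL' = \{L_1 \subset \cdots \subset L_{h-1}\}$ regarded inside $L_{h-1}$, using for $\Delta_{h-1}$ the order inherited by restriction from the compatible order on $\Delta_h$ (this is precisely the order defining $St'_{h-1}$), and keeping the orders on $\Delta_i$ for $i < h-1$ as in the original problem. The inductive hypothesis supplies $A' \in U \cap L_{h-1}$, $C' \in U^- \cap L_{h-1}$, morphisms $U_i' \in L_i$, Weyl elements $w_i'$, and $T$-actions $\odot'_i$ (with $\odot'_{h-1}$ trivial) that satisfy the analogue of \eqref{eq:AC} with Steinberg sections $St_{\calL_i}$ for $i < h-1$ and $St'_{\calL_{h-1}}$ at the top. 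Invoke Lemma \ref{lemma:VZW} to get $V, Z \in V_{L_{h-1}} \subset U$ and $W \in V^-_{L_{h-1}} \subset U^-$ with $t\, St_{\calL_h}(\undera) = t\, Z(\undera)\, St'_{\calL_{h-1}}(\undera)\, W(\undera)\, V(\undera)$. Define the full-problem $A$ and $C$ by inserting $Z$ and $V$ into $A'$ and $W$ into $C'$. Since $L_i \subset L_{h-1}$ for $i \leq h-1$, we have $V_{L_{h-1}} \subset V_i$ and $V^-_{L_{h-1}} \subset V^-_i$, so the inserted factors vanish under $\pi_i$ and $\pi_i^-$, yielding $\pi_i(A) = \pi_i(A')$ and $\pi_i^-(C) = \pi_i^-(C')$ for $i \leq h-1$. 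The equations \eqref{eq:AC} at levels $i \leq h-2$ then follow directly from the sub-problem, with $U_i = U_i'$, $w_i = w_i'$, $\odot_i = \odot'_i$. At level $i = h$, take $w_h = e$, $\odot_h$ trivial, and let $U_h$ absorb the $V$-conjugator of Lemma \ref{lemma:VZW} together with any $U_{h-1}'$-factors produced by substituting the sub-problem expression for $St'_{\calL_{h-1}}$.

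The main obstacle is the case $i = h-1$: the sub-problem gives the identity with $St'_{\calL_{h-1}}$, but \eqref{eq:AC} demands $St_{\calL_{h-1}}$. Apply Lemma \ref{lemma:riordinared} to convert between the two orderings on $\Delta_{h-1}$; this produces an auxiliary morphism $g$, a Weyl element, and a $T$-action on $\mk^{\Delta_{h-1}}$ which stabilizes each line $\mk^\gra$ by the statement of that lemma. These are absorbed into the updated $U_{h-1}$, $w_{h-1}$, and $\odot_{h-1}$ (the line-stability of $\odot_{h-1}$ is automatic from the corresponding property in Lemma \ref{lemma:riordinared}). The principal bookkeeping task is checking that these updates, in particular the interaction between $\odot_{h-1}$ and the lower-level conjugators $U_i'$, remain globally consistent; the underlying algebraic identities reduce to the fact that $V_{L_{h-1}}$ is normalized by $L_{h-1}$ and that it lies in $V_i$ for all $i \leq h-1$, so its elements are transparent to the relevant projections.
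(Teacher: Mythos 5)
Your base case is correct and agrees with the paper's (after one unwinds what the paper's ``$V(0), Z, W(0)$'' amount to via Lemma~\ref{piumenopiu}), and the high-level strategy -- induction on $h$ using Lemmas~\ref{piumenopiu}, \ref{lemma:VZW} and \ref{lemma:riordinared} -- is the right one. But there is a genuine gap in your inductive step: you apply the inductive hypothesis to $\calL' = \{L_1\subset\cdots\subset L_{h-1}\}$ after \emph{replacing} the fixed order on $\Delta_{h-1}$ with the one inherited by restriction from $\Delta_h$ (so as to get $St'_{\calL_{h-1}}$ at the top). This is not legitimate: the whole construction of Section~\ref{ssec:ss} -- in particular Lemma~\ref{lemma:VZW}, which is what drives the induction -- requires the order on $\Delta_{h-1}$ to be compatible with $L_{h-2}$ in the sense of Lemma~\ref{lemma:compatibile}, i.e.\ $\Delta_{h-2}$ must be the initial segment and the beta set must avoid $\Phi_{L_{h-2}}$. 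The inherited order has no reason to satisfy this. Concretely, take $G=SL(4)$, $L_1 = \langle T,\alpha_2\rangle$, $L_2 = \langle T,\alpha_1,\alpha_2\rangle$, $L_3 = G$: the order $\alpha_1,\alpha_2,\alpha_3$ on $\Delta_3$ is compatible with $L_2$, but its restriction $\alpha_1,\alpha_2$ to $\Delta_2$ does not even have $\Delta_1 = \{\alpha_2\}$ as an initial segment, hence is not compatible with $L_1$. This is precisely the difficulty flagged by the paper's remark that no single order need be simultaneously compatible with all the inclusions.

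The paper sidesteps this by applying the inductive hypothesis to $\calL'$ with its own native compatible order (so the IH produces $St_{\calL_{h-1}}$, not $St'_{\calL_{h-1}}$), and then uses Lemma~\ref{lemma:riordinared} in the opposite direction from you: to rewrite the $St'_{\calL_{h-1}}$ produced by Lemma~\ref{lemma:VZW} in terms of $St_{\calL_{h-1}}$, at the cost of a conjugator $g$, a Weyl element $w$, and a $T$-action $\cdot$ on $\mk^{\Delta_{h-1}}$. These are exactly what produce the twisting $t\odot_{h-1}\underc = w(t)^{-1}\cdot\underc$ and the $w(t)^{-1}$-shifts inside $U_i'$, $A'$, $C'$, whereas in your version you take $U_i = U_i'$ and $\odot_i = \odot'_i$ at face value and leave the reconciliation as unfinished ``bookkeeping.'' Fixing the proof requires switching the order of operations -- induct with the native orders first, then invoke Lemma~\ref{lemma:riordinared} -- rather than doing the reordering inside the inductive hypothesis where it is not available.
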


\begin{proof}
We proceed by induction on $h$.  If $h=1$, everything follows from
the previous Lemma setting $A(t,\undera)=t^{-1}V(0)tZ(\undera)$ and
$C(t,\undera)=W(0)$, $U_1(t,\undera)=V(0)$, $w_1=1$ and $\odot_1$
trivial.

If $h>1$, we can   assume that our statement holds for
$\mathcal L' =\{ L_1\subset \dots \subset L_{h-1}\},$ so there exist
morphisms $U_i',A',B'$ elements $w_i'$ and actions $\odot_{i}'$
 satisfying our requirements. We choose morphisms $V,Z,W$ as in the
previous Lemma \ref{lemma:VZW}.

By Lemma \ref{lemma:riordinared} there exist a
$g_0:\mk^{\Delta_{h-1}}\lra L^{ss}_{h-1}$, an element of the Weyl
group $w$ and an action $\cdot$ of $T$ on $\mk^{\Delta_h}$ which
preserves every line $\mk^\gra$, such that $$t\,St_{h-1}'(b )
= g_0(t\cdot b) \, w(t)\, St_{h-1}(t\cdot b) \, g_0^{-1}(t\cdot
b)^{-1},$$ for all $t\in T$ and $b \in \mk^{\Delta_h}$. Extend $g_0$
to a map $g:  \prod_{i=1}^{h-1}\mk^{\Delta_i} \lra L^{ss}_{h-1}$ by
$$g(a^{(1)},\dots,a^{(h-1)})= g_0\left(\cdot\sum_{i=1}^{h-1}
a^{(i)}\right).$$ By definition we have $$ t\,St'_{\calL_{h-1}}(\undera) =
g(t\cdot\undera) \,w(t)\, St_{\calL_{h-1}}(\undera)\,
g(t\cdot\undera)^{-1}.$$

For all $t\in T$, $\underb \in \prod_{i=1}^{h-1}\mk^{\Delta_i}$ and
all $\undera \in \prod_{i=1}^{h}\mk^{\Delta_i}$ define
\begin{align*}
E_h(t,\underb) &=  U'_{h-1}\big(t,w(t)^{-1} \cdot \underb
\big) \, g\big(w(t)^{-1}\cdot \underb\big)^{-1} \,V(\underb) \\
A_h(t,\undera) &= t^{-1} \, E_h(t,\undera) \, V(\undera) \, t \,
Z(\undera) \, t^{-1} \, E_h(t,\undera)^{-1}\, t \\
C_h(t,\underb) &= E_h(t,\underb) \, W(\underb) \,
E_h(t,\underb)^{-1} \\
U_h(t,\underb) &=E_h(t,\underb) \, V(\underb)
\end{align*}
and notice that $E_h$ takes values in $L_{h-1}$, $A_h$ in $V_{h-1}$,
$B_h$ in $V_{h-1}^-$ and $U_h$ in $L_h$. By \eqref{labella} we have
$$
U_h(t,\undera) \, w(t)^{-1} \, St_{\calL_h}(\undera) \,
U_h(t,\undera)^{-1}= $$$$= t\, A_h(t,\undera)\,
A'\big(t,w(t)^{-1}\cdot\undera\big)\,
C'\big(t,w(t)^{-1}\cdot\undera\big)\, C_h(t,\undera).
$$
Now, for all $t\in T$, $\underb \in
\prod_{i=1}^{h-1}\mk^{\Delta_i}$ and all $\undera \in
\prod_{i=1}^{h}\mk^{\Delta_i}$, we define
\begin{align*}
A(t,\undera) &= A_h(t,\undera)\, A'\big(t,w(t)^{-1}\cdot\undera\big)
\\
C(t,\underb)&=C'\big(t,w(t)^{-1}\cdot\underb\big)\, C_h(t,\underb).
\end{align*}
Then $\pi_{h-1}\big(A(t,\undera)\big)=
A'\big(t,w(t)^{-1}\cdot\undera\big)$ and
$\pi_{h-1}^-\big(C(t,\underb)\big)
=C'\big(t,w(t)^{-1}\cdot\underb\big)$. Let $w_h = w^{-1}$,
$w_i=w_i'$. Take  $\odot_h$ to be the  trivial action and, for all $i=1,\dots,h-1$, $t \in
T$, $\underc \in\prod_{^j=1}^i\mk^{\Delta_i}$, set $t\odot_i
\underc := t\odot'_i (w(t)^{-1}\cdot \underc)$. Finally for all
$i=1,\dots,h-1$, $t \in T$, $\underc
\in\prod_{^j=1}^i\mk^{\Delta_i}$ and $t\in T$, define
$$
U_i(t,\underc) = U_i'\big(t,w(t)^{-1}\cdot \underc\big).
$$
By the inductive hypothesis and a straightforward computation our claim
 follows.
\end{proof}

A special case of the previous Lemma gives a section to $q^{ss}_\calL$.

\begin{proposition}\label{prp:Stss}
For every ss-admissible sequence of Levi factors $\calL$, there
exists a morphism $ \chi^{ss} : \prod_{i=1}^h \mk^{\Delta_i}\to U\times U^-$ such
that $q_\calL^{ss}\circ \chi$ is an isomorphism.
\end{proposition}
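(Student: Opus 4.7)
The plan is to specialize Lemma~\ref{lemma:Stss} to $t=1$ and extract the desired section directly from the maps $A$ and $C$ it produces. Setting $t=1$ in~\eqref{eq:AC}, the element $w_i(1)=n_i\cdot 1\cdot n_i^{-1}=1$ (for any Weyl representative $n_i$) and $1\odot_i\undera=\undera$ since $\odot_i$ is a $T$-action. Hence~\eqref{eq:AC} becomes
$$\pi_i\big(A(1,\undera)\big)\,\pi_i^-\big(C(1,\undera)\big)=U_i(1,\undera)\,St_{\calL_i}(\undera)\,U_i(1,\undera)^{-1}.$$

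I would then define $\chi^{ss}(\undera):=\big(A(1,\undera),\,C(1,\undera)^{-1}\big)$, which lies in $U\times U^-$ by the codomain statements of Lemma~\ref{lemma:Stss}. The inverse on the second factor is inserted so that, because $\pi_i^-$ is a group homomorphism, $\pi_i^-\big(C(1,\undera)^{-1}\big)^{-1}=\pi_i^-\big(C(1,\undera)\big)$, matching the left-hand side of the displayed identity above. Thus the $i$-th component of $q^{ss}_\calL\circ\chi^{ss}$ equals $q_i^{ss}\big(U_i(1,\undera)\,St_{\calL_i}(\undera)\,U_i(1,\undera)^{-1}\big)$.

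Next I would observe that $q_i^{ss}$ is invariant under conjugation by all of $L_i$, not just $L_i^{ss}$: since $L_i=Z(L_i)^0\cdot L_i^{ss}$ and the connected central torus $Z(L_i)^0$ acts trivially on $L_i^{ss}$ by conjugation, every conjugation by $L_i$ on $L_i^{ss}$ is inner. Hence the $i$-th component above simplifies to $q_i^{ss}\big(St_i(\sum_{j\leq i}\undera^{(j)})\big)$, and $q^{ss}_\calL\circ\chi^{ss}$ factors as the unitriangular linear automorphism of $\prod_{i=1}^h\mk^{\Delta_i}$ sending $(\undera^{(i)})_i$ to the partial sums $(\sum_{j\leq i}\undera^{(j)})_i$, followed by the product $\prod_i(q_i^{ss}\circ St_i)$. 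Each of these is an isomorphism---the first by unitriangularity, each $q_i^{ss}\circ St_i$ by Steinberg's theorem applied to the simply connected semisimple group $L_i^{ss}$---so the composition is an isomorphism. The main labor having been packed into Lemma~\ref{lemma:Stss}, no serious obstacle remains: the only check worth flagging is the $L_i$-invariance (versus mere $L_i^{ss}$-invariance) of $q_i^{ss}$, which is itself routine.
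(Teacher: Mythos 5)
Your proof is correct and follows essentially the same route as the paper: specialize Lemma \ref{lemma:Stss} at $t=1$ so that $w_i(1)=1$ and $1\odot_i\undera=\undera$, then invoke Steinberg's theorem on each $L_i^{ss}$ after composing with the unitriangular partial-sum change of variables. The paper simply takes $\chi^{ss}=\big(A(1,\cdot),C(1,\cdot)\big)$ and states the result; your two extra checks --- the inverse on the second factor reconciling the sign convention of \eqref{eq:barqss} with \eqref{eq:AC}, and the $L_i$- versus $L_i^{ss}$-conjugation invariance of $q_i^{ss}$ --- are harmless clarifications of points the paper leaves implicit, not a different argument.
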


\begin{proof}Let $A, C$ be as in the previous Lemma and define
$$
\chi^{ss}(\undera) = \big(A(1,\undera),\,C(1,\undera)\big)
$$
By equation \eqref{eq:AC} for
$\big(\undera^{(1)},\dots,\undera^{(h)}\big)\in \prod_{i=1}^h
\mk^{\Delta_i}$, we have
\begin{align*}
q^{ss}_\calL \circ &\chi \big(\undera^{(1)},\dots,\undera^{(h)}\big) = \\
&\bigg( q^{ss}_1\big(St_1(\undera^{(1)})\big),
q^{ss}_2\big(St_2(\undera^{(1)}+\undera^{(2)})\big),\dots,
q^{ss}_h\big(St_h(\sum_{i=1}^{h}\undera^{(i)})\big) \bigg).
\end{align*}
which, by the properties of the Steinberg map, is clearly an
isomorphism.
\end{proof}

\subsection{General simultaneous Steinberg section}\label{ssec:simultanea}
We extend now our section taking into account also the fact that the
various $L_i$ are not semisimple. Let us start with our ss-admissible
sequence $\mathcal L=\{L_0\subset \ldots \subset L_h=G\}$ of standard
Levi subgroups. Let $q_i : L_i \lra L_i /\!/ Ad(L_i)$ be the quotient
map.  We can define $q_\calL : B \times U^- \lra \prod_{i=1}^h L_i/\!/
Ad(L_i)$ as in formula \eqref{eq:barqss}. However in general this map
is not surjective since the Levi subgroups may have common factors in
the center.

\begin{definition}\label{dfn:adm}
Fix a ss-admissible sequence of Levi factors $\mathcal L=\{L_0\subset
\ldots \subset L_h=G\}$. A sequence $\calM =\{ M_1\subset \dots
\subset M_h\}$ of connected subgroups of $G$ is said to be compatible
with $\calL$ if for all $i$, $M_i$ is a subgroup of $L_i$ containing
$L_i^{ss}$. 

We say that the compatible sequence $\calM$ is admissible, if there
exists subtori $S_i \subset T$ such that
 \begin{enumerate}[\indent {\bf S}1: ]
   \item $M_i = S_i \ltimes L_i^{ss}$;
   \item $S_{i+1} \cap M_i = \{1\}$.
 \end{enumerate}
In this case, notice that $ (S_i \times \dots \times S_h)\ltimes L_i^{ss}
\subset G$ is a semi-direct product.
If the sequence $\calM =\{M\subset G\}$ is admissible, we say that $M$
is admissible.
\end{definition}

In what follows we are going to need a number of simple general
remarks which we collect in the following:

\begin{lemma}
\begin{enumerate}[i)]
\item If $S\subset T$ are two tori, there exists a subtorus $R$ of $T$
  such that $T=R\times S$. We call $R$ a complement of $S$ in $T$.
\item Let $M\subset N$ be two reductive connected groups with the same
  semisimple part. If $T$ is a maximal torus of $N$, $T\cap M$ is a
  maximal torus of $M$.
\item Let $D$ be a lattice. Let $A,B,C \subset D$ be sub-lattices such
  that $A$ is saturated in $D$, $A\cap B =C\cap B= \{0\}$ and $A+B$
  has finite index in $D$. Then there exists a sub-lattice $B'$ of $D$
  such that $C\cap B'= \{0\}$ and $A\oplus B'=D$.
\item If $M\subset N$ are connected reductive groups with $|Z(N)\cap M|<\infty$ and  $T$ a maximal
  torus of $N$ which intersects $M$ in a maximal torus of $M$,  there exists a subtorus $S\subset T$ such that
  $N=S\ltimes N^{ss}$ and $S\cap M= \{1\}$.
\end{enumerate}
\end{lemma}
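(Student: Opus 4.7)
For (i), I would use the standard fact that a surjection of finitely generated free abelian groups splits: the inclusion $S \hookrightarrow T$ induces a surjection $X(T) \twoheadrightarrow X(S)$ of character lattices, and since $X(S)$ is free this sequence splits; the kernel of a chosen splitting corresponds to the desired complementary subtorus $R$ with $T = R \times S$. For (ii), my argument would use that in a connected reductive group $H$ the centralizer of $H^{\mathrm{ss}}$ coincides with $Z(H)$. Since $M^{\mathrm{ss}} = N^{\mathrm{ss}} \subset M$, any element of $Z(M)$ centralizes $N^{\mathrm{ss}}$ and so lies in $Z(N)$, giving $Z(M)^0 \subset Z(N)^0 \subset T$. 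Setting $T^{\mathrm{ss}} = T \cap N^{\mathrm{ss}}$, a maximal torus of $N^{\mathrm{ss}} = M^{\mathrm{ss}}$, the subtorus $Z(M)^0 \cdot T^{\mathrm{ss}}$ is a maximal torus of $M$ contained in $T \cap M$. For the reverse inclusion, given $g \in T \cap M$ I would decompose $g = z m$ with $z \in Z(M)^0$ and $m \in M^{\mathrm{ss}}$; since $z$ and $g$ both lie in $T$, so does $m = z^{-1}g$, and therefore $m \in T \cap M^{\mathrm{ss}} = T^{\mathrm{ss}}$, forcing $g \in Z(M)^0 \cdot T^{\mathrm{ss}}$.

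For (iii), my plan is to parametrise the set of complements of $A$ in $D$. Since $A$ is saturated, a complement $B_0$ exists, and every other complement has the form $B_\psi = \{b + \psi(b) : b \in B_0\}$ for a unique $\psi \in \Hom(B_0, A)$. Tensoring with $\mathbb{Q}$, the hypotheses $A \cap B = B \cap C = 0$ together with $A+B$ of finite index force $D_{\mathbb{Q}} = A_{\mathbb{Q}} \oplus B_{\mathbb{Q}}$, and the associated $\mathbb{Q}$-projection $\pi : D_{\mathbb{Q}} \to A_{\mathbb{Q}}$ along $B_{\mathbb{Q}}$ is injective on $C_{\mathbb{Q}}$; in particular $\mathrm{rank}\,C \leq \mathrm{rank}\,A$. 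The main obstacle is converting this rational statement into an integral one: I would argue that among the integral complements $B_\psi$ one can find some sufficiently close to $\pi$ so that the induced integer-valued projection $D \to A$ restricts to an injection on $C$. This requires a lattice-bookkeeping argument to match denominators and saturations; the rank inequality provides the necessary slack.

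For (iv), I would translate the problem to cocharacter lattices. Set $D = X_*(T)$, $A = X_*(T^{\mathrm{ss}})$ (saturated as the cocharacter lattice of a subtorus), $B = X_*(Z(N)^0)$, and $C = X_*(T_M)$ with $T_M := T \cap M$, a maximal torus of $M$ by (ii). The reductive structure gives $T = Z(N)^0 \cdot T^{\mathrm{ss}}$, so $A + B$ has finite index in $D$; finiteness of $Z(N)^0 \cap N^{\mathrm{ss}}$ (and hence of $Z(N)^0 \cap T^{\mathrm{ss}}$) yields $A \cap B = 0$; the hypothesis $|Z(N) \cap M| < \infty$ forces $Z(N)^0 \cap T_M \subset Z(N) \cap M$ to be finite, giving $B \cap C = 0$. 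Applying (iii) produces a saturated $B' = X_*(S)$ with $A \oplus B' = D$ and $B' \cap C = 0$. The first identity translates to the direct-product decomposition $T = S \times T^{\mathrm{ss}}$, which combined with $S \cap N^{\mathrm{ss}} \subset S \cap T^{\mathrm{ss}} = 1$ and $N = Z(N)^0 \cdot N^{\mathrm{ss}} \subset S \cdot N^{\mathrm{ss}}$ gives $N = S \ltimes N^{\mathrm{ss}}$. The second identity gives $(S \cap M)^0 = 1$; to upgrade this to the literal equality $S \cap M = \{1\}$, I would exploit the remaining freedom in the choice of $B'$ within the affine family of complements produced by (iii), modifying $\psi$ so as to kill the residual finite torsion component of $S \cap T_M$. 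This last step is the delicate part: it is not a direct consequence of (iii) as stated and must be performed by a careful matching of the character-level condition $K_S + K_{T_M} = X(T)$ (which is strictly stronger than the cocharacter-level intersection condition).
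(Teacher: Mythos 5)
Parts (i) and (ii) are correct and close to the paper's own arguments (for (ii) the paper argues more tersely via connectedness of the centralizer of a maximal torus, while you exhibit $T\cap M=Z(M)^0\cdot T^{ss}$ explicitly; both are fine), and your translation of (iv) into cocharacter lattices $D=X_*(T)$, $A=X_*(T^{ss})$, $B=X_*(Z(N)^0)$, $C=X_*(T_M)$ is exactly the paper's.

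The genuine gap is in (iii), at precisely the step you defer. Your mechanism is to choose an integral complement $B_\psi$ ``sufficiently close'' to the rational projection $\pi$ along $B_{\mathbb Q}$; but closeness is the wrong notion, since the lattice $\Hom(B_0,A)$ has no points arbitrarily near a given rational homomorphism, and no bookkeeping of denominators produces one. The paper closes the step by a Zariski-density argument instead: writing a complement of $A$ as the span of $w_i=e_{a+i}+u_i$ with $(u_1,\dots,u_b)\in A^b$, the condition that it meet $C$ nontrivially is the simultaneous vanishing of all maximal minors of the matrix with columns $w_1,\dots,w_b,v_1,\dots,v_c$, hence a Zariski-closed condition on the affine space $A^b\otimes_{\mathbb Z}\mathbb Q$; the existence of $B$ shows these polynomials are not identically zero, and since the integral points $A^b$ are Zariski dense, some integral choice avoids the bad locus. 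That determinantal reformulation plus density is the missing idea; once you have it, the rank inequality $c\le a$ is not needed as ``slack'' — it is subsumed in the nonvanishing witnessed by $B$.

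On (iv): the difficulty you flag at the end is real. Two subtori of $T$ whose cocharacter lattices intersect trivially can still meet in a nontrivial finite group (e.g.\ $\{(t,t)\}$ and $\{(t,t^{-1})\}$ in $(\mk^*)^2$ meet in $\{\pm1\}$), so $B'\cap C=0$ only yields that $S\cap M=S\cap T_M$ is finite; triviality needs $(B')^{\perp}+C^{\perp}=X(T)$, i.e.\ $B'\oplus C$ saturated in $D$. You should know that the paper's own proof of (iv) is exactly the reduction you give and does not address this point either, so this is a criticism of the statement of (iii) as an intermediate lemma rather than a defect of your proposal relative to the paper — but neither you nor the paper actually carries out the strengthening of (iii) that would be required to conclude $S\cap M=\{1\}$ on the nose.
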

\begin{proof}
We have already proved $i)$ in section \ref{ssec:Steinbergred}. To
prove $ii)$ notice that $T\cap M$ contains a maximal torus of $M$ and
that it is commutative. Since in a connected group, the centralizer of
a maximal torus is always connected $T\cap M$ is a maximal torus of
$M$.

To prove $iii)$, let $a,b,c,d$ be the ranks of $A,B,C,D$
respectively. By our hypothesis we have $a+b=d$ and $c\leq a$. We can
choose a basis $e_1,\dots,e_a$ of $A$ and extend it to a basis
$e_{a+1},\dots,e_{d}$ of $D$. Let $v_1,\dots,v_c$ be a basis of
$C$. For $u_1,\dots,u_b\in A$ consider the span $B'(u_1,\dots,u_b)$ of
$\{w_i=e_{a+i}+u_i \st i=1,\dots,b\}$. This is a complement of $A$ in
$D$.  $B'(u_1,\dots,u_b)$ intersects $C$ if and only if all the
maximal   minors of the matrix whose columns are given by
$w_1,\dots,w_b, v_1,\dots,v_c$ vanish.

Thus, if for each choice of $u_1,\dots,u_b$ the corresponding
$B'(u_1,\dots,u_b)$ intersects $C$, it means that these minors define
polynomial functions on $A^{b}$ which are identically zero.  However,
if we tensor with the rational numbers, the existence of $B$
guarantees that there exist vectors $u_1,\dots,u_b\in A \otimes_\mZ
\mQ$ on which the value of these polynomial functions is non zero. By
the density of the integers in the Zariski topology we get a
contradiction.

Point $iv)$ is a consequence of $iii)$. Recall that if $R$ is a torus
with lattice of cocharacters $\Lambda_*(R)$, then
$R=\mk^*\otimes_{\Bbb Z}\Lambda_*(R) $. Now, take $D$ to be the lattice of
cocharacters of $T$, $A$ to be the set of cocharacters of $T\cap
N^{ss}$, $B$ the set of cocharacters of the identity component of
$Z(N)$ and $C$ the set of cocharacters of $T\cap M$. Choose $B'$ as in
$iii)$ and set $S=\mk^*\otimes_{\Bbb Z} B'$.
\end{proof}

 A maximal admissible  sequence compatible with $\calL$ can be constructed in the
following way: choose $S_h$ to be a complement torus of $L^{ss}_h$ in
$L_h$ and, for every $i=1,\ldots ,h-1$, $S_i$ to be a complement of $L_{i}^{ss}$ in
$L_{i+1}^{ss}$. Then define $M_i=S_i \ltimes L_i^{ss}$ and notice that
$\calM$ is an admissible sequence and that $L_i$ is the semi-direct
product $S_i\times \dots \times S_h \ltimes L_i^{ss}$ for
$i=1,\dots,h$.

Another class of admissible sequences is described in the following
Lemma.

\begin{lemma}Let $\calM =\{ M_1\subset \dots \subset M_h\}$ be  a sequence
compatible with a ss-admissible sequence $\calL$. Suppose that for
all $i=1,\dots,h-1$ the intersection $Z(M_{i+1})\cap M_i$ is finite.
Then $\calM$ is admissible.
\end{lemma}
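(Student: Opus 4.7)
The plan is to build the subtori $S_1, \ldots, S_h$ one at a time, using parts (i), (ii), (iv) of the preceding lemma. The key preliminary observation is that $T_i := T \cap M_i$ is a maximal torus of $M_i$ for every $i$: by the compatibility assumption $L_i^{ss} \subset M_i \subset L_i$, so $M_i$ and $L_i$ share the same semisimple part $L_i^{ss}$, and since $T$ is a maximal torus of $L_i$, part (ii) applied to $M_i \subset L_i$ gives that $T_i$ is maximal in $M_i$. Iterating, $T_{i-1} = T_i \cap M_{i-1}$ is a maximal torus of $M_{i-1}$.

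For $i = 1$, apply part (i) to the inclusion of tori $T_1 \cap L_1^{ss} \subset T_1$ to obtain a complementary subtorus $S_1 \subset T_1 \subset T$, so that $T_1 = S_1 \times (T_1 \cap L_1^{ss})$. Since the maximal torus $T_1 \cap L_1^{ss}$ together with $L_1^{ss}$ already generates $L_1^{ss}$, and $M_1$ is generated by $T_1$ and $L_1^{ss}$, we obtain $M_1 = S_1 \ltimes L_1^{ss}$, which is S1 for $i=1$. No S2 condition constrains $S_1$.

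For $i = 2, \ldots, h$, apply part (iv) to the inclusion $M_{i-1} \subset M_i$ with the maximal torus $T_i$ of $M_i$ in place of $T$. The hypotheses are all in place: $M_{i-1}$ and $M_i$ are connected reductive; $T_i$ is maximal in $M_i$ and intersects $M_{i-1}$ in the maximal torus $T_{i-1}$; and by the hypothesis of the lemma $|Z(M_i) \cap M_{i-1}| < \infty$. Part (iv) then produces a subtorus $S_i \subset T_i \subset T$ with $M_i = S_i \ltimes M_i^{ss} = S_i \ltimes L_i^{ss}$ and $S_i \cap M_{i-1} = \{1\}$, which simultaneously establishes S1 for $i$ and S2 for $i-1$.

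The steps are essentially bookkeeping once parts (i)--(iv) are available; the only real content is the application of (iv), whose hypothesis $|Z(N) \cap M| < \infty$ is precisely the extra assumption in the statement. In particular the main (mild) obstacle is verifying that the maximal-torus conditions of (iv) propagate up the chain, which is handled by the iterated use of (ii) above.
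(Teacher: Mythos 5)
Your proof is correct and follows the same route as the paper, which simply invokes part (iv) of the preceding lemma to produce each $S_i$ with $M_i=S_i\ltimes L_i^{ss}$ and $S_i\cap M_{i-1}=\{1\}$. Your additional verifications (that $T\cap M_i$ is a maximal torus via part (ii), and the separate treatment of $i=1$ via part (i)) are just the details the paper leaves implicit.
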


\begin{proof}
By property $iv)$ above we can construct $S_{i}$ such that $S_i\cap
M_{i-1}= \{1\}$ and $M_{i}=S_i \ltimes L_i^{ss}$.
\end{proof}

 We now fix an admissible sequence $\calM$ and subgroups $S_i$ with
 properties S1 and S2. Let $S_i'=S_i \times \dots \times S_h$.  For
 each $i$ let $R_i \subset T$ be such that $L_i$ is the semi-direct
 product $R_i \times S'_i \ltimes L_i^{ss}$.
 Let $p_i:L_i \lra L_i^{ss}/\!/Ad(L_i^{ss})$ be an extension to $L_i$
 of the   quotient map defined on $L_i^{ss}$ as in Section \ref{ssec:Steinbergred}.
 Then we can identify the adjoint quotients $q^M_i:M_i\lra
 M_i/\!/Ad(M_i) $ and $q_i:L_i\lra L_i/\!/Ad(L_i)$ with the maps $S_i
 \ltimes L_i^{ss} \lra S_i \times L^{ss}_i/\!/Ad(L^{ss}_i) $ given by
 $(s,g)\mapsto (s,p_i(s\cdot g))$ and $R_i \times S'_i \ltimes
 L_i^{ss} \lra R_i \times S'_i \times L^{ss}_i/\!/Ad(L^{ss}_i)$ given
 by $(r,s,g)\mapsto (r,s,p_i(r\cdot s \cdot g))$.  Considering the
 composition of $q_i$ with the projection on the factor $S_i$ of $S_i'$
 we get a projection $r_i: L_i \lra S_i \times
 L^{ss}_i/\!/Ad(L^{ss}_i) $ and under  the above identifications   we obtain $q_i^M(m)=r_i(m)$
 for each $m
 \in M_i$.

 With this notations we can define a ``simultaneous'' quotient map
 $$
 r_\calM : B \times U^- \lra \prod_{i=1}^h M_i/\!/ Ad(M_i)
 $$
 defined by
 $$
 r_\calM (u,v) = \bigg(r_1\big(\pi_1(u)\pi_1^-(v)\big),\,r_2\big(\pi_2(u)\pi_2^-(v)\big),
 \dots,\,r_h\big(uv \big)\bigg).
 $$
 We can now state our Theorem on simultaneous section.

 \begin{theorem}\label{teo:simultanea}
 Let $\calM$ be an admissible sequence as in definition \ref{dfn:adm}
 and fix subgroups $S_i$, $R_i$ as above so that the map $r_\calM$ is
 defined. Let $S=S_1\times \cdots S_h$. Then there exists $\chi: S
 \times \prod_{i=1}^h \mk^{\Delta_i} \lra B\times U^-$ such that
 $r_\calM \circ \chi :S \times \prod_{i=1}^h \mk^{\Delta_i} \lra
 \prod_{i=1}^h M_i/\!/Ad(M_i)$ is an isomorphism.
 \end{theorem}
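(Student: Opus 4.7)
The plan is to leverage Lemma \ref{lemma:Stss}, which already gives a section for the ss-admissible sequence $\calL$ underlying $\calM$, and to absorb the extra torus factors $S_i$ by an appropriate left-multiplication. First I would apply Lemma \ref{lemma:Stss} to obtain morphisms $A,C,U_i$, Weyl elements $w_i\in N_{L_i}(T)/T$, and $T$-actions $\odot_i$ on $\mk^{\Delta_i}$ satisfying \eqref{eq:AC}. Then define
$$\chi(s_1,\ldots,s_h,\undera)=\bigl(t\,A(t,\undera),\,C(t,\undera)\bigr),\qquad t:=s_1s_2\cdots s_h\in S\subset T.$$
Since $A$ takes values in $U$ and $C$ in $U^-$, $\chi$ maps to $B\times U^-$.

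To compute $r_\calM\circ\chi$, I would observe that $\pi_i$ is a homomorphism $P_i\to L_i$ with $\pi_i(t)=t$ for $t\in T\subset L_i$, so by \eqref{eq:AC} and the $Ad(L_i)$-invariance of $r_i$,
$$r_i\bigl(\pi_i(tA(t,\undera))\,\pi_i^-(C(t,\undera))\bigr)=r_i\bigl(w_i(t)\,St_{\calL_i}(t\odot_i\undera)\bigr).$$
The semi-direct product $L_i=R_i\times S_i'\ltimes L_i^{ss}$ gives a direct product decomposition $T=R_i\times S_i\times S_{i+1}'\times(T\cap L_i^{ss})$, and $r_i$ extracts the $S_i$-part while applying $p_i$ to land in $S_i\times L_i^{ss}/\!/Ad(L_i^{ss})$.

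For the $S_i$-part, conjugation by any element of $L_i$ is trivial on the abelian quotient $L_i/L_i^{ss}\cong R_i\times S_i'$; combined with the direct product decomposition of $T$, this forces $w_i(t)$ and $t$ to share $R_i\times S_i'$-components, hence $S_i$-components. Decomposing $t=s_1\cdots s_h$: for $j>i$ the factor $s_j$ lies in $S_{i+1}'$ and has trivial $S_i$-part; $s_i$ contributes $s_i$; for $j<i$, the chain $M_j\subset M_{i-1}$ gives $s_j\in M_{i-1}\cap T=S_{i-1}\cdot(T\cap L_{i-1}^{ss})$, producing a torus homomorphism $\phi_{ij}:S_j\to S_i$ depending only on $s_j$. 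Hence
$$\sigma_i(s)=s_i\cdot\prod_{j<i}\phi_{ij}(s_j),$$
and the induced morphism $\sigma=(\sigma_i)_{i=1}^h:S\to S$ is lower triangular with identity on the diagonal, hence an isomorphism of tori.

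For the $L_i^{ss}/\!/Ad(L_i^{ss})$-part, $St_{\calL_i}(t\odot_i\undera)=St_i(t\odot_i\sum_{j\leq i}\aundera^{(j)})$. For fixed $s$ (hence fixed $t$) and fixed $\aundera^{(j)}$ with $j<i$, the addition by $\sum_{j<i}\aundera^{(j)}$ is an affine automorphism of $\mk^{\Delta_i}$, the action $\odot_i$ is diagonal on each line $\mk^\gra$ and hence an automorphism of $\mk^{\Delta_i}$, and by Proposition \ref{prp:Steinbergred} the map $\aundera\mapsto p_i(w_i(t)St_i(\aundera))$ is an isomorphism $\mk^{\Delta_i}\to L_i^{ss}/\!/Ad(L_i^{ss})$. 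The $i$-th semi-simple component of $r_\calM\circ\chi$ is therefore an isomorphism in $\aundera^{(i)}$ once the remaining data is fixed; together with the isomorphism $\sigma$ on $S$, this exhibits $r_\calM\circ\chi$ as a triangular morphism that can be inverted block-by-block, so it is an isomorphism. The main obstacle is the $S_i$-part analysis: one must combine the triviality of conjugation on the abelian quotient $L_i/L_i^{ss}$, the direct product decomposition of $T$, and the nested chain $M_1\subset\cdots\subset M_h$ to read off the triangular structure of $\sigma$ needed for block inversion.
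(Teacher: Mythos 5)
Your proposal is correct and follows essentially the same route as the paper: define $\chi(s,\undera)=(sA(s,\undera),C(s,\undera))$ with $A,C$ from Lemma \ref{lemma:Stss}, use the $Ad(L_i)$-invariance of $r_i$ together with \eqref{eq:AC}, read off the $S_i$-coordinate by decomposing $s=s_1\cdots s_h$ in $M_i\cap T=S_i\times(T\cap L_i^{ss})$ to get a unipotent-triangular automorphism of $S$, and handle the $L_i^{ss}/\!/Ad(L_i^{ss})$-coordinate via Proposition \ref{prp:Steinbergred} and the triangular dependence on $(\aundera^{(1)},\ldots,\aundera^{(i)})$. One small wording slip: for $j<i$ the homomorphism $\phi_{ij}\colon S_j\to S_i$ comes from decomposing $s_j$ inside $M_i\cap T=S_i\times(T\cap L_i^{ss})$, not from the identity $M_{i-1}\cap T=S_{i-1}\cdot(T\cap L_{i-1}^{ss})$ as you wrote (the paper instead lumps $t_1\cdots t_{i-1}\in M_{i-1}\subset M_i$ and splits once in $M_i$), but the intended mechanism is the same and the conclusion is unaffected.
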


 \begin{proof}
 Let $A, C$ be as in Lemma \ref{lemma:Stss} and define
$$
 \chi(s,\undera) = \big( s\, A(s,\undera) ,\, C(s,\undera)\big).
$$
 We prove that $r_\calM \circ \chi$ is an isomorphism.
 
 Let $s=t_1\cdots t_h$ with $t_i \in S_i$. By property S1, for each
 $i=2,\dots,h$, there exist functions $\lambda_i:S_1\times \dots
 \times S_{i-1} \lra L_i^{ss}$ and $\mu_i:S_1\times \dots \times
 S_{i-1} \lra S_i$, such that
 $$
 t_1\cdots t_{i-1}=\lambda_i(t_1,\dots, t_{i-1}) \mu_i(t_1,\dots,
 t_{i-1}).
 $$
 Then with the notation of Lemma \ref{lemma:Stss} we have
 \begin{align*}
 r_i\big( \chi(s,\undera) \big) & = r_i\bigg( s \, \pi_i \big(
 A(s,\undera) \big) \, \pi_i^{-}\big(C(s,\undera)\big)\bigg) \\
 &= \bigg( t_i \mu_i(t_1\cdots t_{i-1}) , p_i\big(s \, \pi_i \big(
 A(s,\undera) \big) \, \pi_i^{-}\big(C(s,\undera)\big)\bigg) \\
 &= \bigg( t_i \mu_i(t_1\cdots t_{i-1}) , p_i\big(w_i(s)
 St_{\calL_i}(s\odot_i \undera) \big)\bigg).
 \end{align*}
 At this point everything follows easily from \ref{prp:Steinbergred}.
 \end{proof}

We can consider also the following slightly different ``simultaneous''
quotient map that will be needed later on. Let
$S=\prod_{i=1}^h S_i$ be as in Theorem \ref{teo:simultanea}. Define
$q^{ss}_\calM : SU \times U^- \lra \prod_{i=1}^h L^{ss}_i/\!/
Ad(L^{ss}_i)$ by formula \eqref{eq:barqss}, and finally define
$$
\bar r_\calM : SU \times U^- \lra S \times \prod_{i=1}^h L^{ss}_i/\!/ Ad(L^{ss}_i)
$$ by $\bar r_\calM (su,v)= \big(s, q^{ss}_\calM(s u,v) \big)$ for all
$s\in S$, $u\in U$ and $v\in U^-$.
A variant of Theorem  \ref{teo:simultanea} which we are going to use later, is given by

\begin{lemma}\label{lem:simultanea}
Let $\calM$ be an admissible sequence and let $S,\bar r_\calM$ as
above.  Then there exists $\chi: S \times \prod_{i=1}^h \mk^{\Delta_i} \lra SU\times
U^-$ such that $\bar r_\calM \circ \chi :S \times \prod_{i=1}^h
\mk^{\Delta_i} \lra S\times \prod_{i=1}^h L^{ss}_i/\!/Ad(L^{ss}_i)$ is an
isomorphism. Moreover the $S$ component of $\bar r_\calM(\chi(s,x))$ is equal to $s$.
\end{lemma}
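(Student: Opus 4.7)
The plan is to mimic the proof of Theorem \ref{teo:simultanea}, taking the section
$$\chi(s,\undera) = \big(s\, A(s,\undera),\, C(s,\undera)\big),$$
where $A,C$ are supplied by Lemma \ref{lemma:Stss}. Since $sA(s,\undera)$ lies in $SU$ with $S$-component exactly $s$, the final claim of the Lemma—that the $S$-component of $\bar r_\calM\big(\chi(s,\undera)\big)$ is $s$—is immediate from the very definition of $\bar r_\calM$.

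For the isomorphism statement, since $\bar r_\calM\circ\chi$ is the identity on the $S$-factor, it suffices to check that for each fixed $s\in S$ the map
$$\undera \longmapsto q^{ss}_\calM\big(sA(s,\undera),\,C(s,\undera)\big)$$
from $\prod_{i=1}^h \mk^{\Delta_i}$ to $\prod_{i=1}^h L^{ss}_i/\!/Ad(L^{ss}_i)$ is an isomorphism. Applying \eqref{eq:barqss} componentwise and then substituting equation \eqref{eq:AC} of Lemma \ref{lemma:Stss}, the conjugating factors $U_i(s,\undera)^{\pm 1}$ are killed by the $Ad$-invariance of $q^{ss}_i$, and the $i$-th component reduces to
$$q^{ss}_i\big(w_i(s)\,St_{\calL_i}(s\odot_i\undera)\big).$$

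To conclude, I invert this map inductively in $i$. The $i$-th component depends on $\undera^{(1)},\dots,\undera^{(i)}$ only through $\sum_{j\leq i}\undera^{(j)}$, so once $\undera^{(1)},\dots,\undera^{(i-1)}$ have been recovered from the previous equations, the $i$-th equation becomes a map in $\undera^{(i)}$ alone. Because $w_i(s)\in T$, and because by the last part of Lemma \ref{lemma:Stss} the $T$-action $\odot_i$ preserves each line $\mk^{\gra}$ (so $s\odot_i$ is a diagonal automorphism of $\mk^{\Delta_i}$), Proposition \ref{prp:Steinbergred} applies and shows that the resulting map is an isomorphism $\mk^{\Delta_i}\to L^{ss}_i/\!/Ad(L^{ss}_i)$. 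Iterating over $i$ gives the desired isomorphism for every fixed $s$, and hence the global one.

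The main—and essentially the only nonroutine—point is that all the twists introduced by Lemma \ref{lemma:Stss}, namely the conjugation by $U_i$, the action $\odot_i$ and the Weyl-type element $w_i$, cooperate in just the right way with $q^{ss}_i$ and with the Steinberg section of the reductive group: the conjugations disappear by $Ad$-invariance, while $w_i(s)$ is absorbed via Proposition \ref{prp:Steinbergred} exactly because $\odot_i$ respects the coordinate decomposition. Beyond this bookkeeping, the proof adds nothing new to the argument already carried out for Theorem \ref{teo:simultanea}.
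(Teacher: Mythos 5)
Your proposal is correct and follows essentially the same route as the paper, which simply reuses the section $\chi(s,\undera)=\big(sA(s,\undera),C(s,\undera)\big)$ from Theorem \ref{teo:simultanea} and reduces, via equation \eqref{eq:AC} and $Ad$-invariance, to the componentwise statement handled by Proposition \ref{prp:Steinbergred}. Your explicit triangular induction on $i$ just spells out what the paper compresses into ``everything follows easily from \ref{prp:Steinbergred}.''
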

\begin{proof}
The same function $\chi$ defined in the proof of Theorem \ref{teo:simultanea}
satisfies the requirements of the Lemma. The proof is completely analogous.
\end{proof}

 \section{The $GL(n)$ case}
 The proof of Theorem \ref{teo:simultanea} is constructive.  However
 it is difficult to write down an explicit general formula. In this
 section we construct a very explicit section in the case of $GL(n)$
 which seems to us to be particularly nice.

 We start by giving a curious parametrization of the Borel subgroup
 of lower triangular matrices.

 Let $A=(a_{i,j})$ be a lower triangular $n\times n$ matrix and let
 $C=(c_{i,j})$ be the upper $n\times n$ triangular matrix with $c_{i,j}=1$ for
 all $i\leq j$.

 Take the product matrix $D=AC$ and, for any $1\leq i_1<i_2<
 i_r\leq n$, denote by $[i_1,i_2,\ldots ,i_r]$ the determinant of the
 principal minor of $D$   consisting of the rows (and columns) of
 index $i_1,i_2,\ldots ,i_r$.

 \begin{proposition} We have, setting $i_0=0$,   
 \begin{equation}\label{indu}
 [i_1,i_2,\ldots ,i_r]=\prod_{h=1}^{r}(a_{i_h,i_h}+\cdots +a_{i_h,i_{h-1}+1}).\end{equation}
 \end{proposition}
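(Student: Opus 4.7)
The plan is to compute the entries of $D = AC$ explicitly, then reduce the principal minor to a triangular matrix by elementary column operations, and read off the determinant as the product of the new diagonal.

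First I would observe that
$$d_{i,j} = \sum_{k=1}^{n} a_{i,k} c_{k,j} = \sum_{k=1}^{\min(i,j)} a_{i,k},$$
using $a_{i,k}=0$ for $k>i$ and $c_{k,j}=1$ iff $k\le j$. The key structural fact is that $d_{i,j}$ depends on $j$ only through $\min(i,j)$; in particular, along any row $i$, the entries stabilize at $a_{i,1}+\cdots+a_{i,i}$ once $j\ge i$.

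Next I would let $M$ be the $r\times r$ submatrix of $D$ with rows and columns indexed by $i_1<\cdots<i_r$, so $M_{h,\ell}=d_{i_h,i_\ell}$, and perform the elementary column operations $C_\ell \mapsto C_\ell - C_{\ell-1}$ for $\ell = r, r-1, \dots, 2$, which preserve the determinant. I would then evaluate the new entries by splitting into cases according to how $h$ compares with $\ell$ and $\ell-1$:
\begin{itemize}
\item If $h<\ell$, then $i_h\le i_{\ell-1}\le i_\ell$, so both $d_{i_h,i_\ell}$ and $d_{i_h,i_{\ell-1}}$ equal $\sum_{k=1}^{i_h} a_{i_h,k}$ and the new entry is $0$.
\item If $h=\ell$, the new entry is $d_{i_h,i_h}-d_{i_h,i_{h-1}} = \sum_{k=i_{h-1}+1}^{i_h} a_{i_h,k}$.
\item If $h>\ell$, the new entry is $\sum_{k=i_{\ell-1}+1}^{i_\ell} a_{i_h,k}$, lying strictly below the diagonal.
\end{itemize}
The first column, left untouched, gives $d_{i_h,i_1}$, which at $h=1$ equals $\sum_{k=1}^{i_1} a_{i_1,k} = \sum_{k=i_0+1}^{i_1} a_{i_1,k}$ with the convention $i_0=0$, matching the desired diagonal factor.

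Thus after the column reduction $M$ is lower triangular with diagonal entries $\sum_{k=i_{h-1}+1}^{i_h} a_{i_h,k}$, so its determinant is the product of these, which is exactly the right-hand side of \eqref{indu}. The only real work is the case analysis above, but it is routine; I do not expect any genuine obstacle. An alternative would be induction on $r$ via Laplace expansion on the last row or column, exploiting the same stabilization of $d_{i,j}$ in $j$, but the column-reduction argument is more transparent.
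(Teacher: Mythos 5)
Your proof is correct and is essentially the argument the paper gives: both rest on the formula $d_{i,j}=\sum_{k=1}^{\min(i,j)}a_{i,k}$ and on subtracting adjacent columns of the principal minor. The only cosmetic difference is that you perform all the column operations at once to reach a lower triangular matrix, whereas the paper subtracts just the second-to-last column from the last one and obtains the recursion $[i_1,\ldots,i_r]=[i_1,\ldots,i_{r-1}](a_{i_r,i_r}+\cdots+a_{i_r,i_{r-1}+1})$ by induction on $r$.
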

 \begin{proof}
 Remark that, if $d_{h,k}$ is the entry of $D$,  in the $h$-th row and $k$-th column,
 \begin{equation}\label{espr}
 d_{h,k}=\begin{cases}  a_{h,1}+\ldots +a_{h,k}\text{\ if \ } k\leq h\\
 a_{h,1}+\ldots +a_{h,h}\text{\ if \ } k\geq h\end{cases}.
 \end{equation}
 From this our result is clear for $r=1$ and we can proceed by
 induction.

 Set $\underline i:=(i_1,\ldots ,i_r)$ and denote by
 $D(\underline i)$ the corresponding principal minor. The last two columns of
 $D(\underline i)$ are
 $$\left(\begin{array}{ll}\sum_{s=1}^{i_1}a_{i_1,s} &
   \sum_{s=1}^{i_1}a_{i_1,s} \\\sum_{s=1}^{i_2}a_{i_2,s} &
   \sum_{s=1}^{i_2}a_{i_2,s} \\....... &
   ....... \\\sum_{s=1}^{i_{r-1}}a_{i_{r-1},s} &
   \sum_{s=1}^{i_{r-1}}a_{i_{r-1},s} \\\sum_{s=1}^{i_{r-1}}a_{i_r,s}
   &\sum_{s=1}^{i_{r}}a_{i_r,s}\end{array}\right)$$
 Substituting the last column with the difference of the last two columns, we
 deduce
 $$[i_1,i_2,\ldots ,i_r]=[i_1,i_2,\ldots
   ,i_{r-1}](a_{i_r,i_r}+\cdots +a_{i_r,i_{r-1}+1}).$$
 From this everything follows.
 \end{proof}

 This Proposition has some simple consequences.

 \begin{proposition}\label{poli}
 Let $P_r$ be the $r$-th coefficient of the characteristic polynomial of $D$. Then
 \begin{enumerate}
 \item $P_r$ does not depend from $a_{h,k}$ if $n-h+k< r$.
 \item $P_r$ depends linearly from $a_{h,k}$ if $n-h +k= r$. Furthermore, if  $n-h +k= r$,
 the coefficient of $a_{h,k}$ is  $\prod_{t=1}^{k-1}a_{t,t}\prod_{t=h+1}^{n}a_{t,t}$.
 \end{enumerate}
 \end{proposition}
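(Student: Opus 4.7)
The plan is to expand $P_r$ as a sum of principal minors and apply the formula (\ref{indu}) to each one. Recall that $P_r$ is (up to sign) the sum over increasing sequences $1\leq i_1<\cdots<i_r\leq n$ of the principal minor $[i_1,\ldots,i_r]$, which by the preceding proposition factors as
\[
\prod_{\ell=1}^{r}\bigl(a_{i_\ell,i_\ell}+a_{i_\ell,i_\ell-1}+\cdots+a_{i_\ell,i_{\ell-1}+1}\bigr),
\]
with the convention $i_0=0$.

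The key observation is that a fixed entry $a_{h,k}$ (with $k\le h$) appears in this product in at most one factor, namely the factor of index $\ell$ with $i_\ell=h$, and only if $i_{\ell-1}+1\le k$. Moreover inside that sum $a_{h,k}$ occurs with coefficient $1$. Hence each minor, and therefore $P_r$ itself, depends linearly on $a_{h,k}$. For $a_{h,k}$ to contribute to $[i_1,\ldots,i_r]$ we thus need $h\in\{i_1,\ldots,i_r\}$, say $h=i_\ell$, with $i_{\ell-1}\le k-1$. The first condition forces $\{i_1,\ldots,i_{\ell-1}\}\subset\{1,\ldots,k-1\}$, giving $\ell-1\le k-1$; the second forces $\{i_{\ell+1},\ldots,i_r\}\subset\{h+1,\ldots,n\}$, giving $r-\ell\le n-h$. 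Adding these inequalities yields $r\le n-h+k$.

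From this estimate, part (1) is immediate: if $n-h+k<r$ no admissible sequence $i_1<\cdots<i_r$ exists, so $a_{h,k}$ appears in none of the minors and hence not in $P_r$. For part (2), when $n-h+k=r$ both inequalities above are forced to be equalities, which uniquely determines
\[
(i_1,\ldots,i_r)=(1,2,\ldots,k-1,\,h,\,h+1,\ldots,n),
\]
with $\ell=k$. Computing the remaining factors: for $s<k$ one has $i_s=s$ and $i_{s-1}=s-1$, so the factor collapses to $a_{s,s}$; for $s>k$ one has $i_s=h+s-k$ and $i_{s-1}+1=i_s$, so the factor is $a_{h+s-k,h+s-k}$. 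Multiplying these gives the coefficient $\prod_{t=1}^{k-1}a_{t,t}\prod_{t=h+1}^{n}a_{t,t}$, which is precisely the claimed expression.

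The only real subtlety is bookkeeping with the indices $i_{\ell-1}+1$ and verifying that the extremal sequence is indeed unique; no genuine obstacle arises, as both parts reduce to combinatorial counts on which subsets of $\{1,\ldots,n\}$ can contain $h$ with the prescribed gap structure.
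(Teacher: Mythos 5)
Your proof is correct and follows essentially the same route as the paper's: expand $P_r$ as the sum of principal minors $[i_1,\ldots,i_r]$, apply the product formula \eqref{indu}, and observe that $a_{h,k}$ can occur only when $i_\ell=h$ and $i_{\ell-1}\le k-1$, which forces $r\le n-h+k$ and, in the extremal case, pins down the unique minor $[1,\ldots,k-1,h,\ldots,n]$. The only cosmetic difference is that you derive the inequality by adding two cardinality bounds where the paper bounds the single gap $i_s-i_{s-1}\le n-r+1$; both arguments are equivalent.
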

 \begin{proof}
 Denote by  $\Lambda_r$ the set of subsets of  $\{1,\ldots ,n\}$ of
 cardinality $r$. Any such subset has a obvious total order. We have
 $$P_r=\sum_{\{i_1,\ldots ,i_r\}\in \Lambda_r}[i_1,i_2,\ldots ,i_r].$$
 As above, set $i_0=0$ and observe that, given $\underline
 i:=(i_1,\ldots ,i_r)$, for each $s$, $i_s-i_{s-1}\leq n-r+1$. If
 $a_{h,k}$ appears in $[i_1,i_2,\ldots ,i_r]$, then
 necessarily there is a $1\leq s\leq r$ with $i_s=h$. Furthermore we
 must also have $h-k\leq h-i_{s-1}-1\leq n-r$ and so $n-h+k\geq r$.

 This proves our first claim.

 As for our second claim, notice that if $n-h+k=r$ and $i_s=h$ then
 necessarily, since $i_s-i_{s-1}=n-r+1$, $i_{s-1}=k$. It follows   that
 $[i_1,i_2,\ldots ,i_r]=[1,2,\ldots ,k-1,h, h+1,\ldots ,n]$. But, by Proposition \ref{poli},
 $$[1,2,\ldots ,k-1,h, h+1,\ldots
   ,n]=\sum_{k}^ha_{h,j}\prod_{t=1}^{k-1}a_{t,t}\prod_{t=h+1}^{n}a_{t,t}$$
 and everything follows.
 \end{proof}

 Given an $n\times n$ matrix $X$, set   $X_i$ equal  to the $i\times i$ matrix
 obtained from $X$ erasing the last $n-i$ rows and columns. Notice that 
  $D_i=A_iC_i$.

 If we let $A$ vary in the Borel subgroup $B\subset GL(n)$ of lower
 triangular matrices, we obtain, for each $i=1,\ldots ,n$, a map
 $\phi_i: B\to GL(i)$ defined by $\phi_i(A)=A_i$. Composing with the
 map associating to each matrix in $GL(i)$ the coefficients
 $P_1^{(i)},\ldots , P_i^{(i)}$ of its characteristic polynomial (we
 assume that $P_h^{(i)}$ has degree $h$, so that in particular
 $P_i^{(i)}$, being the determinant, takes non zero values on $GL(i)$),
 we get a morphism
 $$c_i: B\to \mk^{i-1}\times \mk^*$$
 defined by
 $c_i(A)=(P_1^{(i)}(D_i),\ldots , P_i^{(i)}(D_i))$ .
 If we take the map $\Pi=\times_{i=1}^nc_i$, we get a map
 $$\Pi:B\to  \mk^{\frac{n(n-1)}{2}}\times (\mk^*)^n.$$

 \begin{theorem}\label{teo:GL} The map $\Pi$ is an isomorphism. \end{theorem}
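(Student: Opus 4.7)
The plan is to construct a regular inverse to $\Pi$ by reconstructing the entries $a_{h,k}$ of $A$ in order of increasing sub-diagonal distance $s:=h-k$. Both source and target have dimension $n(n+1)/2$ and $\Pi$ is visibly a morphism, so it suffices to exhibit a regular inverse. The engine is Proposition~\ref{poli}: for fixed $i$ and $r$, $P_r^{(i)}$ does not see entries of $A_i$ on sub-diagonals deeper than $i-r$, depends linearly on the entries of sub-diagonal $i-r$, and depends in a possibly non-linear way on the shallower entries. Setting $s=i-r$, this means that once the sub-diagonals $0,1,\dots,s-1$ of $A$ are known, each $P_{i-s}^{(i)}$ becomes a linear equation in the sub-diagonal-$s$ entries $a_{s+1,1},a_{s+2,2},\dots,a_{i,i-s}$ of $A_i$, with the coefficient of $a_{h,k}$ given by $\prod_{t=1}^{k-1}a_{t,t}\prod_{t=h+1}^{i}a_{t,t}$.

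The inversion then proceeds in two phases. First I recover the diagonal from $P_i^{(i)}=\det(A_iC_i)=\prod_{t=1}^{i}a_{t,t}$ via $a_{i,i}=P_i^{(i)}/P_{i-1}^{(i-1)}$ (with the convention $P_0^{(0)}=1$). This is regular on the target since each $P_i^{(i)}$ lies in $\mk^*$, and automatically produces $a_{t,t}\in\mk^*$. Next, for $s=1,2,\dots,n-1$ in turn, I solve the equations $P_{i-s}^{(i)}$ for $i=s+1,s+2,\dots,n$ in order of increasing $i$. At step $i$ the only new unknown on sub-diagonal $s$ is $a_{i,i-s}$, whose leading coefficient---the formula above with $h=i$ and $k=i-s$---is $\prod_{t=1}^{i-s-1}a_{t,t}$, nonzero by the first phase. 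Hence $a_{i,i-s}$ is a regular function of the target coordinates and previously recovered entries.

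There is no real obstacle: the triangular dependence encoded in Proposition~\ref{poli} forces the resulting linear system to be upper-triangular with invertible diagonal, so the inversion is automatic. The only thing to keep track of is the order of elimination---diagonals first, then sub-diagonals in increasing order of $s$, and within a single $s$ in increasing order of $i$---so that every denominator we introduce is a product of entries already known to lie in $\mk^*$.
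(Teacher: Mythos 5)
Your proposal is correct and follows essentially the same route as the paper: recover the diagonal from $P_i^{(i)}=\prod_{t\le i}a_{t,t}$, then use Proposition~\ref{poli} to peel off sub-diagonals one at a time, each new entry being the unique unknown in a linear equation whose coefficient is an already-known product of diagonal entries. Your inner loop over $i$ for fixed $s$ is just back-substitution on the triangular system with unit diagonal that the paper writes down for fixed $r=s$, so the two arguments are the same.
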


 \begin{proof}
 We are going to explain how to construct an inverse to $\Pi$. This
 will follow if we show that once we have fixed the values $z_h^{(k)}$ of the
 functions $P_h^{(k)}$, $1\leq k\leq h\leq n$, there exists a unique
 $A=(a_{i,j})$ in $B$ such that $z_h^{(k)}=P_h^{(k)}(D_k)$.

Let us start with the diagonal entries.  We must have, for $i=1,\ldots ,n$,
 $$z_i^{(i)}=\prod_{h=1}^ia_{h,h}.$$ Since $z_h^{(h)}\in \mk^*$ we get
 $$ a_{h,h}=\frac {z_h^{(h)}}{z_{h-1}^{(h-1)}}.$$

 Let us now do induction on $h-k=r$. Assume that the entries $a_{p,q}$
 with $p-q<r$ can be uniquely determined by  
 $z_s^{(t)}$ with $s-t<r$.  By Proposition \ref{poli} we deduce,
 $$(\prod_{t=1}^{h-1}a_{t,t})^{-1}z^{(h+r)}_{h}=\frac{z^{(h+r)}_{h}}{z^{(h-1)}_{h-1}}=a_{h+r,h}+F_{h}(r)$$
 where $F_{h}(r)$ is a degree one polynomial in the entries $a_{m+r,m}$
 with $m<h$ whose coefficients are    polynomials in the entries $a_{s,s}^{\pm}$,
 $a_{s,t}$ with $0\leq s-t< r$. Since, by induction all the $a_{s,t}$ with $0\leq s-t<r$ have
 been already determined, 
 the entries $a_{h+r,h}$ are solutions of a linear system in triangular
 form with $1$'s on the diagonal and hence are uniquely determined.
 \end{proof}

 We consider now the Gelfand-Zeitlin  sequence
 $$
\{ GL(1)\subset \dots \subset GL(n)\}
 $$
 where $GL(i)$ is the subgroup leaving invariant the last $n-i$
 coordinates.  Notice that this is an admissible sequence. The map
 $r=r_\calM:B \times U^- \lra \prod_{i=1}^n GL(i)/\!/Ad(GL(i))$
 of the previous section can be described in the following way
 $$
 r(U,V) = \big( P_j^{(i)}(U_iV_i)\big)\ \ \text{for\ \ }1\leq j\leq i\leq n
 $$
  Hence Theorem
 \ref{teo:GL} gives an explicit form of Theorem \ref{teo:simultanea}:

 \begin{corollary}
 Let $\chi: B \lra B \times U^-$ be defined by $\chi(A)= (A,C)$.
 Then $r\circ \chi$ is an isomorphism.
 \end{corollary}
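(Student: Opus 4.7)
The plan is to recognize that $r\circ\chi$ literally equals the map $\Pi$ of Theorem \ref{teo:GL}; once this identification is made, the corollary is an immediate consequence of that theorem.

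First, I would unpack the definition of $r\circ\chi$. Since $\chi(A)=(A,C)$, the displayed formula for $r$ just above the statement gives
\[r(\chi(A))=\bigl(P^{(i)}_j(\pi_i(A)\,\pi_i^-(C))\bigr)_{1\le j\le i\le n}.\]
In the $GL(n)$ setup the Levi $L_i$ of the parabolic $P_i$ (with block sizes $(i,1,\dots,1)$) is $GL(i)\times (\mk^*)^{n-i}$, and the projections $\pi_i$ and $\pi_i^-$ onto the $GL(i)$ factor simply extract the upper-left $i\times i$ principal block; thus $\pi_i(A)=A_i$ and $\pi_i^-(C)=C_i$. Because $A$ is lower triangular, the sum $(AC)_{hk}=\sum_\ell A_{h\ell}C_{\ell k}$ is supported on $\ell\le h\le i$ whenever $h,k\le i$, so $(AC)_i=A_iC_i=D_i$. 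Hence $r(\chi(A))=\bigl(P^{(i)}_j(D_i)\bigr)_{1\le j\le i\le n}=\Pi(A)$.

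Next, I would match the codomains: $GL(i)/\!/Ad(GL(i))$ is parameterized by the coefficients $P_1^{(i)},\dots,P_i^{(i)}$ of the characteristic polynomial, with $P_i^{(i)}$ the nowhere-vanishing determinant, so
\[\prod_{i=1}^n GL(i)/\!/Ad(GL(i))\;\cong\;\mk^{n(n-1)/2}\times(\mk^*)^n,\]
precisely the codomain of $\Pi$. Under this identification $r\circ\chi=\Pi$, and Theorem \ref{teo:GL} then asserts that this composition is an isomorphism.

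There is no genuine obstacle; the only substantive check is the block identity $(AC)_i=A_iC_i$, which is immediate from the lower triangularity of $A$.
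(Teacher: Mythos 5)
Your argument is correct and is exactly the reasoning the paper intends: the paper supplies no explicit proof of the corollary beyond stating the formula $r(U,V)=\big(P^{(i)}_j(U_iV_i)\big)$ and invoking Theorem \ref{teo:GL}, and your proof simply spells out the identification $r\circ\chi=\Pi$ (including the small observation $D_i=A_iC_i$, which the paper also notes just before introducing $\phi_i$). The only slight imprecision is that $\pi_i,\pi_i^-$ project onto the full Levi $L_i=GL(i)\times(\mk^*)^{n-i}$ rather than onto $GL(i)$ itself, but since the extra $(\mk^*)^{n-i}$ factor is discarded by $r_i$ this does not affect the conclusion.
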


 Notice that a similar result holds if we take a subsequence of the sequence above:
 $$
 GL(n_1)\subset GL(n_2) \subset \dots \subset GL(n_h)
 $$ with $n_1< \dots <n_h=n$. In this case define $\calA$ as the
 subset of $B$ of matrices $A=(a_{i,j})$ such that for $m\neq n_h$, $a_{m,j}=\delta_{m,j}$. Then, if we define $\chi:\calA \lra B \times U^-$ by
 $\chi(A)=(A,C)$, the same proof shows that $r \circ \chi$ is an
 isomorphism.

 \section{Poisson commutative subalgebras of the algebra of the Poisson dual of $G$}
\label{sez:app1}
 We want to apply our result on the existence of a generalized
 Steinberg section to the study of some Poisson algebras arising from
 Manin triples.  Recall that a Manin triple is a triple
 $(\gog,\goh,\gok)$ where $\gog$ is a Lie algebra equipped with a non degenerate
 invariant bilinear form $\kappa$, $\goh$, $\gok$ are Lie subalgebras
 of $\gog$ which are maximal isotropic subspaces with respect to
 $\kappa$ and such that $\kappa$ induces a perfect pairing between
 $\goh$ and $\gok$, so that $\gog=\gok\oplus \goh$.  If $H$ is a
 connected group with Lie algebra equal to $\goh$, considering left
 invariant vector fields, we identify the tangent bundle on $H$ with $H
 \times \goh$, the cotangent bundle on $H$ with $H \times \gok$ and
 if $f$ is a function on $H$, we denote with $\grd_x f\in \gok$ the
 differential of $f$ at $x$ w.r.t.\ this isomorphism. Assume now that
 $H$ is a subgroup of a group $G$ with Lie algebra equal to $\gog$ so
 that $H$ acts on $\gog$ preserving the form $\kappa$.  A Poisson
 structure on $H$ is then defined in the following way
 $$ \{ f , g \} (x) = \kappa (\grd_xf ; Ad_x(\grd_xg)) -\kappa (\grd_xg
 ; Ad_x(\grd_xf))$$ for all $x\in H$ and $f,g$ functions on
 $H$.
 If $(\gog_1,\goh_1,\gok_1)$ and $(\gog_2,\goh_2,\gok_2)$ are two Manin
 triples   and $\grf : \gog_1 \lra \gog_2$ is a morphism of Lie
 algebras such that $\grf(\goh_1)\subset \goh_2$ and $\grf(\gok_1)
 \subset \gok_2$ and $\phi : H_1 \lra H_2$ is a group homomorphism   
   whose differential is equal to $\grf$, then $\phi$
 does not need to be a Poisson map. However we have the following
 Lemma.

 \begin{lemma}\label{lem:Manintriple}
Let $(\gog_1,\goh_1,\gok_1)$ and $(\gog_2,\goh_2,\gok_2)$ be  two Manin
 triples. Let  
 $\kappa_i$ be the invariant bilinear form on $\gog_i$, $i=1,2$. Let $\grf :
 \gog_1 \lra \gog_2$ be such that
 \begin{enumerate}[\indent i)]
    \item $\grf$ is a morphism of Lie algebras;
    \item $\grf(\gok_1)\subset \gok_2$, and $\grf(\goh_1)\subset \goh_2$;
    \item $\kappa_2(\grf(u),\grf(v))=\kappa_1(u,v)$ for all $u,v \in \gog$;
    \item $\psi=\grf^*:\frak h_2 \lra \frak h_1$ is a morphism of Lie algebras.

 \end{enumerate}
 For $i=1,2$, let $G_i$ be a group with Lie algebra equal to $\gog_i$
 and let $H_i\subset G_i$ be a connected subgroup with Lie algebra
 equal to $\goh_i$ and consider on $H_i$ the Poisson structure
 introduced above. Let  $\Phi:H_1\lra H_2$ and $\Psi:H_2\lra
 H_1$ be  homomorphisms whose differentials are  $\grf$ and $\psi$.

 Then the map $\Psi$ is a morphism of Poisson groups.
 \end{lemma}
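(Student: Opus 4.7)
\emph{Proof plan.} My plan is to verify the Poisson morphism condition $\{f\circ\Psi,g\circ\Psi\}_{H_2}(y)=\{f,g\}_{H_1}(\Psi(y))$ directly from the defining bracket formula. The first ingredient is the identification $\grd_y(f\circ\Psi)=\grf(\grd_{\Psi(y)}f)$, which follows by the chain rule together with the definition of $\psi=\grf^*$ (namely $\kappa_1(w,\psi X)=\kappa_2(\grf(w),X)$ for $w\in\gok_1$, $X\in\goh_2$). Substituting this into the bracket on $H_2$ and using the isometry hypothesis (iii) to move $\grf$ across $\kappa_2$, the whole question reduces to the identity
$$\psi\bigl(\pi_{\goh_2}(Ad_y\grf(v))\bigr)=\pi_{\goh_1}(Ad_{\Psi(y)}v)\qquad(\star)$$
for all $v\in\gok_1$ and $y\in H_2$, where $\pi_{\goh_i}\colon\gog_i\to\goh_i$ denotes the projection along $\gok_i$.

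Setting $F(y,v)$ equal to the difference of the two sides of $(\star)$, I observe that $F$ is linear in $v$ and $F(e,v)=0$. The main step is to prove $F\equiv 0$ by a linear ODE argument on $H_2$. Differentiating $F$ along the left-invariant vector field attached to $X\in\goh_2$ produces the brackets $[X,\grf(v)]\in\gog_2$ and $[\psi X,v]\in\gog_1$; I decompose these as $A+B$ and $A'+B'$ along the respective Manin splittings. A short dualization of hypothesis (iv), obtained by pairing with an arbitrary element of $\goh_2$ and unfolding the definitions using (i)--(iii), gives the identity $\pi_{\gok_2}[\grf(\xi),Y]=\grf\bigl(\pi_{\gok_1}[\xi,\psi Y]\bigr)$ for $\xi\in\gok_1$, $Y\in\goh_2$; this forces $A=\grf(A')$ with $A'=\pi_{\gok_1}[\psi X,v]$. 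A companion computation, using only (i)--(iii), yields the dual identification $\psi(B)=B'$. Since $Ad_y$ preserves $\goh_2$ and $\psi$ is $Ad$-equivariant (being the differential of a group homomorphism), the $B$- and $B'$-contributions cancel exactly, leaving the linear homogeneous equation $X_L F(y,v)=F\bigl(y,\pi_{\gok_1}[\psi X,v]\bigr)$ on $F(y)\in\Hom(\gok_1,\goh_1)$. Iterating, every left-invariant derivative of $F$ at $e$ vanishes, and connectedness of $H_2$ then forces $F\equiv0$, establishing $(\star)$.

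The main obstacle is the algebraic identification of hypothesis (iv) with the dual identity $\pi_{\gok_2}[\grf(\xi),Y]=\grf(\pi_{\gok_1}[\xi,\psi Y])$, which is precisely what transports the $\gok_2$-component of $[X,\grf(v)]$ back to $\gok_1$ and closes the recursion. The complementary $\goh_2$-component requires no extra input, since $\goh_2$ is $Ad$-invariant under $H_2$ and the differential of a Lie group homomorphism automatically intertwines the adjoint actions.
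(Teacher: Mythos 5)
Your proof is correct, and it follows a genuinely different route from the paper's.

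\textbf{Comparison.} Both proofs reduce, via the chain rule $\grd_y(\Psi^*f)=\grf(\grd_{\Psi(y)}f)$, to the single identity
$$\kappa_2\bigl(\grf(u),Ad_y\grf(v)\bigr)=\kappa_1\bigl(u,Ad_{\Psi(y)}v\bigr),\qquad u,v\in\gok_1,\ y\in H_2,$$
which is indeed equivalent to your $(\star)$ after pairing against $u\in\gok_1$. The paper then exploits $\psi\circ\grf=\mathrm{id}$ (a consequence of non-degeneracy of $\kappa_1$ and hypothesis (iii)) to split $H_2\isocan N\rtimes H_1$ with $N=\ker\Psi$; the identity is immediate from (iii) on the $\Phi(H_1)$ factor, and on $N$ both sides vanish because $V=\gon\oplus\grf(\gok_1)$ is a maximal isotropic, $\gon$-stable (hence $N$-stable) subspace. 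You instead prove $(\star)$ entirely infinitesimally: the left-invariant derivative of the discrepancy $F(y,v)$ closes up to the recursion $X_L F(y,v)=F(y,\pi_{\gok_1}[\psi X,v])$ after the two dualizations of (iv) and (i) that you indicate (both of which check out), so all iterated derivatives at $e$ vanish and connectedness finishes the argument. Your route never invokes $\Phi$, $\psi\circ\grf=\mathrm{id}$, or the semidirect decomposition at all --- only $\Psi$ and its differential --- which is structurally cleaner in that sense. What the paper's argument buys is brevity and a char-independent, purely algebraic flavor (it avoids the Taylor/Krull-intersection step implicit in ``all left-invariant derivatives at $e$ vanish $\Rightarrow F\equiv 0$''); your argument, by contrast, needs characteristic zero to conclude from formal vanishing. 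Since the paper works in characteristic zero for all the quantum-group applications anyway, this is a minor trade-off, and your derivation is perfectly valid as an alternative proof.
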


 \begin{proof}
Notice first that, since the bilinear form $\kappa_1$ is non
 degenerate, we have $\psi \circ \grf = id$, hence $\Psi
 \circ \Phi = id$. So, if $N = \ker \Psi$ we have $H_2 \isocan N
 \rtimes H_1$, in particular $N$ is connected. Now we prove that for
 all $u,v \in \gok_1$ and for all $x\in H_2$
 $$\kappa_2(\grf(u),Ad_x(\grf(v))) = \kappa_1(u,Ad_{\Psi(x)} v).$$

 Indeed if $x= \Phi(y)$,  since $\Psi\circ\Phi=id$ , this is clear by 
 property $iii)$. We prove now the statement for $x \in N$. In this case
 we need to prove that $\kappa_2(\grf(u),Ad_x(\grf(v))) = 0$.  Let
 $\gon$ be the Lie algebra of $N$. Consider the subspace $V=\gon \oplus
 \grf(\gok_1)$.  Notice that $V$   is maximal isotropic
 with respect to \ $\kappa_2$.  Moreover, a simple computation shows that it is
 stable under the action of $\gon$, so it is stable also under the
 action of $N$. In particular for $x \in N$ and $u,v \in \gok_1$ we
 have $\kappa_2\big(\grf(u),Ad_x(\grf(v))\big) = 0$ as desired.

 Let now $f,g$ be two functions on $H_1$, $x\in H_2$ and 
 $y=\Psi(x)$. An easy computation shows that $\grd_x (\Psi^*f) =
 \grf(\grd_yf)$. Hence we have
 \begin{align*}
 \{\Psi^*f,\Psi^*g\}(x)
   &= \kappa_2\big( \grf(\grd_yf),Ad_x(\grf(\grd_yg)) \big) -
     \kappa_2\big( \grf(\grd_yg),Ad_x(\grf(\grd_yf)) \big) \\
   &= \kappa_1\big( \grd_yf,Ad_y(\grd_yg) \big) -
     \kappa_1\big( \grd_yg,Ad_y(\grd_yf) \big)
   = \{f,g\}(y)
 \end{align*}
 proving the claim.
 \end{proof}

 We apply  this result to our situation.  Recall some simple facts
 about the definition of the Poisson dual of a group $G$.  Fix a
 maximal torus $T$ of $G$.  Let $\gog$ be the Lie algebra of $G$ and
 $\got$ the Lie algebra of $T$. Fix an invariant non degenerate bilinear
 form $(-,-)$ on $\gog$.
 On the Lie algebra $\gog \oplus \gog$  
 define  the non degenerate bilinear form
 $\kappa\big((x,y),(u,v)\big)=(x,u)-(y,v)$.  The
 triple $(\gog \oplus \gog, \Delta, \goh)$ where
 $\Delta$ is the diagonal subalgebra and $\goh =\{(x+t,y-t)\in \gob
 \oplus \gob^- \st x, \in \gou, y \in \gou^-$ and $t \in \got\}$ is a Manin triple.  Indeed it is immediate to see 
  that both subalgebras are isotropic and that they are 
 disjoint. Correspondingly the Poisson dual of $G$ is the group
 $$ H=\{(x,y)\in B\times B^- \st \pi_T(x)=\pi^-_T(y)^{-1}\}$$ where
 $\pi_T,\pi_T^-$ are the projections on $T$ of $B,B^-$ respectively.
 As we have explained above the Manin triple induces on $H$ the
 structure of a Poisson Lie group and, if one consider the map
\begin{equation}\label{ladua}\rho=\rho_G:H\lra G\end{equation}
defined by $\rho(x,y)=x\,y^{-1}$,   the symplectic leaves in $H$ are
the connected components of the pre-images of the conjugacy classes in
$G$ (indeed in the simply connected case in \cite{DKP} it is shown that such
pre-images are always connected unless they are zero
dimensional). Furthermore the map $\rho$ is a fiber bundle onto the
open Bruhat cell of $G$ with fiber $\{t\in T \st t^2=1\}$. Let
$q:G\lra G/\!/Ad(G)$ be the quotient under the adjoint action and let
$\theta=\theta_G = q \circ \rho : H \lra G/\!/Ad(G)$. We denote the
algebra $\theta^*(\mk[G/\!/Ad(G)])$ by $Z^{HC}$ or $Z^{HC}_G$. It is
not difficult to check that each Hamiltonian vector field kills the
elements in $Z^{HC}$ so that this algebra is central with respect to
the Poisson structure.  

 \medskip

Consider now a standard Levi subgroup $L$ of $G$ and denote by $\gol$
its Lie algebra. Let $B_L=L\cap B$ and $B_L^-=L\cap B^-$ be the
standard Borel subgroup and the opposite standard Borel subgroup of
$L$, and denote by $\pi_L:B\lra B_L$, and $\pi^{-}_L:B^{-}\lra
B^{-}_L$ the projections onto the Levi factor.  Notice that the
restriction of the form $(-,-)$ to $\gol$ is non degenerate so we can
define a Manin triple $(\gol\oplus \gol , \Delta_\gol, \goh_\gol)$
taking the intersection of $\gol\oplus \gol$ with $\Delta$ and
$\goh$. Define also $H_L= H\cap L\times L$ and $\rho_L:H_L \lra L$ and
$Z^{HC}_L \subset \mk[H_L]$ as before. We notice that the transpose
$\psi$ of the inclusion, from $\goh$ to $\goh_L$ is a morphism of Lie
algebras and can be integrated to a map $\Psi_L:H\lra H_L$ given by
$\Psi_L(u,v)=(\pi_L^+(u),\pi_L^-(v))$. We can apply Lemma
\ref{lem:Manintriple} and we get that $\Psi_{L}^*$ is a morphism of
Poisson Lie groups. In particular $A^G_L = \Psi_L^*(Z^{HC}_L)$ is a Poisson
commutative subalgebra of $\mk[H]$. In the case that $L=T$, we can
define in a similar way a   Poisson commutative
subalgebra larger than $A^G_T$. In this case we can identify $H_T$ with $T$
and the Manin triple $(\got\oplus\got,\Delta_\got,\goh_\got)$ is
commutative so $H_T=T$ is Poisson commutative. Hence $A'_T=
\Psi_T^*(\mk[T])$ is a Poisson commutative subalgebra of
$\mk[H]$. Notice that $A'_T$   is an
extension of degree $2^{\dim T}$ of $A^G_T$.

\medskip

Let now $\calL=\{L_1\subset \dots L_h\}$ be a ss-admissible sequence
of standard Levi subgroups and let $\calM=\{M_1\subset \dots M_h\}$ be
an admissible sequence compatible with $\calL$. Choose subtori $S_i$
which satisfy properties S1 and S2.  Define $S=\prod_{i=1}^hS_i$. 
Choose a complement of $S$ in $T$ and denote by $p_S:T\lra S$ the
associated projection.  Let also, as in section \ref{ssec:ss}, denote by $p_i : L_i \lra
L_i^{ss}/\!/Ad(L_i^{ss})$   the projection on the adjoint quotient of the
semisimple factor of $L_i$. Define
$\theta_i = p_i \circ \rho_{L_i} \circ \Psi_{L_i} : H \lra
L^{ss}_i/\!/Ad(L^{ss}_i)$ and $\theta_\calM: H \lra S \times
\prod_{i=1}^h L_i/\!/Ad(L^{ss}_i)$ by
$$
\theta_\calM(u,v) =
\left(p_S\big(\Psi_T(u)\big),\theta_1(u,v),\dots,\theta_h(u,v) \right).
$$ We can use this map and the simultaneous Steinberg section to construct a
big Poisson commutative subalgebra of $\mk[H]$.

\begin{theorem} \label{teo:app1}
 The map $\theta_\calM ^* : \mk[S]\otimes \bigotimes_{i=1}^h \mk[L^{ss}_i]^{Ad(L^{ss}_i)} \lra
 \mk[H]$ is injective and its image is a Poisson commutative
 subalgebra of $\mk[H]$.
\end{theorem}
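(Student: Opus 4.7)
The plan is to prove injectivity of $\theta_\calM^*$ and Poisson commutativity of its image as two essentially independent steps.

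\medskip

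\emph{Injectivity.} I will use Lemma \ref{lem:simultanea}. Define $\iota : SU \times U^- \lra H$ by $\iota(su,v) = (su,\,s^{-1}v)$; this lands in $H$ because $\pi_T(su) = s = \pi_T^-(s^{-1}v)^{-1}$. Let $\chi : S \times \prod_i \mk^{\Delta_i} \to SU \times U^-$ be the section of Lemma \ref{lem:simultanea}, for which $\bar r_\calM \circ \chi$ is an isomorphism and the $S$-component of $\chi(s,\undera)$ is $s$. A direct computation shows that $\theta_\calM \circ \iota \circ \chi$ has $S$-component equal to $s$ and $i$-th semisimple component equal to
$$p_i\big(\pi_{L_i}(su)\,\pi_{L_i}^-(s^{-1}v)^{-1}\big) = p_i\big(s\,\pi_{L_i}(u)\,\pi_{L_i}^-(v)^{-1}\,s\big) = p_i\big(s^2\,\pi_{L_i}(u)\,\pi_{L_i}^-(v)^{-1}\big),$$
the last equality by the conjugation-invariance of the class function $p_i$ on $L_i$. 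Thus $\theta_\calM \circ \iota \circ \chi$ differs from $\bar r_\calM \circ \chi$ only by substituting $s^2$ for $s$ inside the $p_i$-arguments; since squaring is surjective on the torus $S$, the resulting map is still a bijection onto its target, in particular dominant. Therefore $\theta_\calM$ is dominant and $\theta_\calM^*$ is injective.

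\medskip

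\emph{Poisson commutativity.} For every standard Levi $L_i$, the Manin-triple inclusion $(\gol_i\oplus\gol_i, \Delta_{\gol_i}, \goh_{\gol_i}) \hookrightarrow (\gog\oplus\gog, \Delta, \goh)$ satisfies the hypotheses of Lemma \ref{lem:Manintriple}; the Lie algebra condition (iv) holds because the unipotent radical $V_{L_i}$ of the parabolic containing $L_i$ is an ideal of the Borel $B$, so the transpose $\goh \to \goh_{\gol_i}$ is the projection determined by $\pi_{L_i}^\pm$. Hence every $\Psi_{L_i}$ is a Poisson morphism. For $i \leq j$ one has the factorisation $\Psi_{L_i} = \Psi_{L_i,L_j} \circ \Psi_{L_j}$ via the obvious projection $\Psi_{L_i,L_j} : H_{L_j} \to H_{L_i}$. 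Thus for $f_i \in \mk[L_i^{ss}]^{Ad(L_i^{ss})}$ and $f_j \in \mk[L_j^{ss}]^{Ad(L_j^{ss})}$, both $\theta_i^* f_i = \Psi_{L_j}^*\big(\Psi_{L_i,L_j}^*(p_i\circ\rho_{L_i})^* f_i\big)$ and $\theta_j^* f_j = \Psi_{L_j}^*\big((p_j\circ\rho_{L_j})^* f_j\big)$ lie in $\Psi_{L_j}^* \mk[H_{L_j}]$, with $(p_j\circ\rho_{L_j})^* f_j \in Z^{HC}_{L_j}$ Poisson-central in $\mk[H_{L_j}]$. Using that $\Psi_{L_j}$ is a Poisson morphism,
$$\{\theta_i^* f_i,\,\theta_j^* f_j\}_H \;=\; \Psi_{L_j}^*\{\Psi_{L_i,L_j}^*(p_i\circ\rho_{L_i})^* f_i,\,(p_j\circ\rho_{L_j})^* f_j\}_{H_{L_j}} \;=\; 0.$$
The $\mk[S]$-factor is handled identically, since $\Psi_T$ factors through every $\Psi_{L_i}$ and $H_T \cong T$ is Poisson-abelian.

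\medskip

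The main obstacle is the verification that each Levi inclusion $\gol_i \hookrightarrow \gog$ gives a Manin-triple inclusion satisfying condition (iv) of Lemma \ref{lem:Manintriple}; this is a short root-space computation, the key fact being that $V_{L_i}$ is an ideal of $B$. The identity $p_i(sas)=p_i(s^2a)$ and the surjectivity of squaring on the torus $S$ used in the injectivity step are immediate.
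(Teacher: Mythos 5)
Your Poisson--commutativity argument is correct and is essentially the paper's own: the image sits inside the algebras $A^G_{L_i}=\Psi_{L_i}^*(Z^{HC}_{L_i})$ together with $A'_T$, each of these is Poisson commutative, and for $i\le j$ one factors $\Psi_{L_i}$ through $\Psi_{L_j}$ and uses that $Z^{HC}_{L_j}$ is Poisson central in $\mk[H_{L_j}]$ together with the fact (Lemma \ref{lem:Manintriple}) that $\Psi_{L_j}^*$ is a Poisson map. Your one-line verification of hypothesis (iv) of Lemma \ref{lem:Manintriple} (the nilradical of the parabolic is an ideal of the Borel, so the transpose of the inclusion is the projection $\pi_{L_i}^{\pm}$ and is a Lie algebra map) is a detail the paper only asserts, and it is right.

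The injectivity half, however, has a genuine gap at its last step. Writing $\chi(s,\undera)=(sA(s,\undera),C(s,\undera))$, your composite $\theta_\calM\circ\iota\circ\chi$ sends $(s,\undera)$ to $\big(s,\,(p_i(s^2\,\pi_i(A(s,\undera))\,\pi_i^-(C(s,\undera))^{-1}))_i\big)$: the torus element that gets squared is the one sitting \emph{inside} the argument of $p_i$, while the section $A,C$ is still evaluated at $s$ and the $S$-component of the output is still $s$. This map is therefore not $\bar r_\calM\circ\chi$ precomposed with a surjective reparametrization of $S$, and ``squaring is surjective on the torus'' does not transfer bijectivity from one to the other. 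Concretely, the bijectivity in Lemma \ref{lem:simultanea} rests on identity \eqref{eq:AC},
$$s\,\pi_i\big(A(s,\undera)\big)\,\pi_i^-\big(C(s,\undera)\big)=U_i\,w_i(s)\,St_{\calL_i}(s\odot_i\undera)\,U_i^{-1},$$
where applying the class function $p_i$ kills the conjugation by $U_i$ and reduces the claim to Proposition \ref{prp:Steinbergred}; with your extra factor of $s$ in front one instead gets $p_i\big(U_i^{-1}sU_i\cdot w_i(s)\,St_{\calL_i}(s\odot_i\undera)\big)$, and since $U_i^{-1}sU_i\notin T$ the reduction fails. The repair --- and it is exactly what the paper does --- is to feed $s^2$ into the section: set $\chi'(s,\undera)=\big(sA(s^2,\undera),\,s^{-1}C(s^2,\undera)^{-1}\big)$, so that $\theta_\calM\circ\chi'(s,\undera)=\big(s,\,q^{ss}_\calM(\chi(s^2,\undera))\big)$, and bijectivity (hence dominance of $\theta_\calM$ and injectivity of $\theta_\calM^*$) follows from Lemma \ref{lem:simultanea} applied with $s^2$ in place of $s$.
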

\begin{proof}
 In order to show the  injectivity of the map $\theta_\calM^*$ we are going to see that 
  the map $\theta_\calM$ is surjective. We can use the section $\chi$
constructed in Lemma \ref{lem:simultanea}. Using the notation of Section 2, let $A, C$ be as in Lemma \ref{lemma:Stss} and define $\chi':
 S \times \prod_{i=1}^h \mk^{\Delta_i}\lra H$ by
 $$
 \chi'(s,\undera) = \big( s\, A(s,\undera) ,\, s^{-1}C(s,\undera)^{-1}\big).
 $$ Then $\theta_\calM \big(\chi'(s,\undera)\big)= \Big(s,
 q_\calM^{ss}\big(\chi(s^2,\undera)\big) \Big)$. Since by Lemma
 \ref{lem:simultanea} for every fixed $s$ the map $a\mapsto
 q_\calM^{ss}\chi(s^2,\undera)$ is bijective we get that also the map
 $\theta_\calM\circ \chi'$ is bijective.

To prove that the image is a Poisson commutative subalgebra we notice that it
is contained in the product of the subalgebras $A^G_{L_1},\dots,
A^G_{L_h}$ and $A'_T$. Recall  $A^G_L= \Psi_L^* (Z^{HC}_L)$   is Poisson commutative. So it is
enough to prove that that $A^G_{L_i}$ commutes with $A^G_{L_j}$ when $i>j$ and with  $A'_T$. Notice
that if $i>j$, then $A^G_{L_j}= \Psi_{L_i}^*(A^{L_i}_{L_j})$. Hence,
since by Lemma \ref{lem:Manintriple} $\Psi_{L_i}^*$ is a Poisson map
and $Z^{HC}_{L_i}$ is in the center of $\mk[H_{L_i}]$, $A^G_{L_j}$
commutes with $A^G_{L_i}$. We can argue in a similar way for the algebra $A'_T$.
\end{proof}



In the case of $GL(n)$ we can produce in this way a commutative
subalgebra of maximal dimension. We consider again the Gelfand-Zeitlin
admissible sequence
$$\calM =\{ GL(1)\subset \dots \subset GL(n)\}. $$ In this case the product $S
\times \prod_{i=1}^n SL(i)/\!/Ad(SL(i))$ has dimension
$\binom{n+1}{2}$ and its coordinate ring $A$ is the localization of a
polynomial algebra in $\binom{n+1}{2}$ with respect to some variables.

By what we have recalled above, the generic symplectic leaves of $H$ have
dimension equal to the regular orbits in $G$, hence to $n^2-n$. So
maximal isotropic subspaces in the tangent space of a generic point of
$H$ have dimension $n^2-\binom{n}{2}=\binom{n+1}{2}$.

Generically the quotient map $\theta_\calM$ is a smooth map and its fibers are
maximal dimensional isotropic sub-varieties of $H$. So the dimension of
$A$ is the maximal possible dimension of a commutative Poisson
subalgebra of $\mk[H]$, which can be stated by saying that it defines
a completely integrable Hamiltonian system.

\section{The center of a quantum group in the reductive case}\label{sec:centro}
In this section we recall the description of the center of a quantum
group at roots of unity in the simply connected case proved in
\cite{dp}. We also give an extension of this result to the case of a
reductive group which we could not find in the literature. We assume
the characteristic of $\mk$ to be equal to zero from now on.

We start by giving the definition of the quantum group associated to a
reductive group. If $G$ is a connected reductive group we denote by
$T_G$ a maximal torus of $G$ and by $\Lambda_G$ the   lattice of
characters of the chosen maximal torus $T_G$.  We choose a set of
simple roots $\Delta_G=\{\alpha_1,\ldots ,\alpha_r\}$ and we set
$a_{i,j}=\langle \alpha_i,\check \alpha_j \rangle$ so that
$C=(a_{i,j})$ is the Cartan matrix. Let $(d_1,\ldots d_r)$ be the
usual non zero entries of the diagonal matrix such that $CD$ is
symmetric. We also assume that we have a non degenerate symmetric
invariant bilinear form $(-\, ,\, -)$ on the Lie algebra of $G$.  If
we restrict $(-\, ,\, -)$ to the Cartan subalgebra $\mathfrak t$= Lie
$T_G$, we get a non degenerate form. We assume that the corresponding
form on $\mathfrak t^*$ takes integer values on $\Lambda_G\times
\mZ[\Delta_G]$ and furthermore $CD=\big((\alpha_i,\alpha_j)\big)$.
Notice that $\langle \grl,\check\alpha\rangle=0$ if and only if
$(\grl,\alpha)=0$. We set $q_i=q^{d_i}$.

For a non zero complex number $q$ the algebra $U_q(G)$ is the algebra
with generators $E_1,\ldots ,E_n;F_1,\ldots ,F_n$ and $K_{\lambda}$,
$\lambda\in \Lambda_G$ and relations:

\begin{eqnarray*}
 &(R1)&\quad K_{\lambda} K_{\mu} =K_{\lambda+\mu} , \quad
  K_{0} =1,\\
 &(R2)&\quad K_{\lambda}E_iK_{-\lambda}=q^{( \lambda,\alpha_i )} E_i,\\
 &(R3)&\quad K_{\lambda}F_iK_{-\lambda}=q^{-( \lambda,\alpha_i )} F_i,\\
 &(R4)&\quad[\,E_i, F_j\,]=\delta_{i,j}\frac{K_{\alpha_i}-K_{-\alpha_i}}{q_i-q_i^{-1}},\\
 &(R5)&\quad \sum_{k=0}^{1-a_{ij}} (-1)^{k}
 \left[ \begin{array}{c} 1-a_{ij}\\ k\end{array}\right]_{q_{i}}E_{i}^{1-a_{ij}-k} E_{j} E_{i}^{k} =0 \quad (i\ne j),\\
 &(R7)&\quad\sum_{k=0}^{1-a_{ij}} (-1)^{k}
 \left[ \begin{array}{c} 1-a_{ij}\\ k\end{array}\right]_{q_{i}}F_{i}^{k} F_{j} F_{i}^{1-a_{ij}-k}=0 \quad (i\ne j),
\end{eqnarray*}
for all $\lambda,\mu\in \Lambda$, $i,j=1,\ldots r$ where we set for
any $t$, and for $h\leq m$,
$$\left[ \begin{array}{c} m\\h \end{array}\right]_t\, =
\frac{[m]_t!}{[m-h]_t![h]_t!} \ ,\qquad [h]_t!=[h]_t\dots [2]_t[1]_t
\ , \qquad \left[h\right]_t = \frac{t^h-t^{-h}}{t-t^{-1}} \ . $$
Recall that $U_q(G)$ is a Hopf algebra (see \cite{dr}).
We denote the center of $U_q(G)$ by $Z_q(G)$.

\medskip

If we have an isogeny  $\phi: G_1 \lra G_2$ then this
determines an inclusion of $\Lambda_{G_2}$ in $\Lambda_{G_1}$ and of
$U_q(G_2)$ in $U_q(G_1)$. Let $\Gamma$ be the kernel of $\phi$ and
notice that $\Gamma = \Spec \mk[\Lambda_{G_1}/\Lambda_{G_2}] =
\Hom(\Lambda_{G_1}/\Lambda_{G_2},\mk^*)$ so we can identify it with a group of
 characters of $\Lambda_{G_1}$. We have the following action
of $\Gamma$ on $U_q(G_1)$:
$$
\gamma \, E_i= E_i \qquad \;
\gamma \, F_i= F_i \qquad \;
\gamma \, K_\lambda = \gamma (\lambda) K_\lambda
$$ for all $\lambda \in \Lambda_{G_1}$ and for all $i$. Notice that
since $\phi$ is an isogeny all the roots are contained in
$\Lambda_{G_2}$, hence the action is well defined.  Notice also that
$K_\mu $ is fixed by $\Gamma$ if and only if $\mu$ is an element of
$\Lambda_{G_2}$. Hence using the Poincar\'e Birkoff
Witt basis \cite{lus}, we obtain the following Lemma.

\begin{lemma} \label{lem:UGamma}
With the notation above, we have
$$ U_q(G_2) = U_q(G_1)^\Gamma $$ and $U_q(G_1)$ is a free module over
$U_q(G_2)$ with basis given by $K_\mu$ with $\mu$ which vary in a
set of representatives of $\Lambda_{G_1} /\Lambda_{G_2}$.
\end{lemma}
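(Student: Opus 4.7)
The plan is to use Lusztig's Poincar\'e--Birkhoff--Witt basis to reduce both assertions to bookkeeping with characters of $\Gamma$. A PBW basis of $U_q(G_1)$ consists of ordered monomials $F^{\mathbf a} K_\lambda E^{\mathbf b}$, where $F^{\mathbf a}, E^{\mathbf b}$ are ordered products of root vectors (built from $E_i, F_i$ via the braid group action, hence polynomials in $E_i, F_i$ alone) and $\lambda$ ranges over $\Lambda_{G_1}$; the analogous statement for $U_q(G_2)$ uses the same monomials with $\lambda \in \Lambda_{G_2}$. The root vectors belong to both algebras since the simple roots lie in $\Lambda_{G_2}$.

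For the invariance statement, the defining $\Gamma$-action fixes $E^{\mathbf b}$ and $F^{\mathbf a}$ and sends $K_\lambda$ to $\gamma(\lambda) K_\lambda$, so each PBW monomial is a $\Gamma$-eigenvector with character $\gamma \mapsto \gamma(\lambda)$, depending only on the class of $\lambda$ in $\Lambda_{G_1}/\Lambda_{G_2}$. Since $\mk$ is algebraically closed of characteristic zero, $\Gamma = \Hom(\Lambda_{G_1}/\Lambda_{G_2}, \mk^*)$ separates the classes of this finite abelian group. Hence an element of $U_q(G_1)$ is $\Gamma$-invariant if and only if its PBW expansion involves only $\lambda \in \Lambda_{G_2}$, i.e.\ lies in $U_q(G_2)$.

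For the freeness statement, fix a set $R \subset \Lambda_{G_1}$ of representatives for $\Lambda_{G_1}/\Lambda_{G_2}$. Writing $\lambda = \nu + \mu$ with $\nu \in \Lambda_{G_2}$ and $\mu \in R$, and using relations $(R2)$--$(R3)$ together with the fact that the weights of $E^{\mathbf b}$ and $F^{\mathbf a}$ lie in $\mZ[\Delta_G] \subset \Lambda_{G_2}$, one can move $K_\mu$ to the right at the cost of a nonzero scalar:
$$F^{\mathbf a} K_\lambda E^{\mathbf b} = c \bigl( F^{\mathbf a} K_\nu E^{\mathbf b} \bigr) K_\mu, \qquad c \in \mk^*,$$
the parenthesized factor belonging to $U_q(G_2)$. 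Hence $U_q(G_1) = \sum_{\mu \in R} U_q(G_2) K_\mu$. Directness is immediate: any relation $\sum_\mu y_\mu K_\mu = 0$, once each $y_\mu$ is expanded in the PBW basis of $U_q(G_2)$, becomes a linear combination of PBW monomials of $U_q(G_1)$ whose torus weights lie in pairwise distinct cosets modulo $\Lambda_{G_2}$, forcing every $y_\mu=0$. The only step requiring real care is verifying that the scalars produced by commuting $K_\mu$ past the root vectors are indeed in $\mk^*$, which is transparent from $(R2)$ and $(R3)$; beyond this bookkeeping there is no essential obstacle.
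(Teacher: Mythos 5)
Your proof is correct and follows the same route as the paper, which simply observes that $K_\mu$ is $\Gamma$-fixed iff $\mu\in\Lambda_{G_2}$ and then invokes the Poincar\'e--Birkhoff--Witt basis; you have merely written out the eigenspace decomposition and the commutation bookkeeping that the paper leaves implicit. No gaps.
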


 Let $\tilde S_G$   be the
connected component of the center of our reductive group $G$. Then $\tilde S_G=M\otimes_{\mathbb Z}
\mk^*$ where $M$ is the lattice of cocharacters which are trivial on
$\Delta_G$. In the previous setting choose $G_1=G^{ss}\times \tilde
S_G$, $G_2=G$ and $\phi$ the multiplication map.  The kernel $\Gamma$ is
the set of pairs $(\gamma,\gamma^{-1})$ with $\gamma\in \tilde S_G \cap
G^{ss}$ and we will identify it with a subgroup of $G$.

In this case we have $\Lambda_{G_1} = \Lambda_{G^{ss}} \oplus
\Lambda'$, where $\Lambda'$ is the dual of $M$. We have a surjective
projection from $\Lambda_G$ to $\Lambda_{G^{ss}}$ given by
restriction of characters, whose kernel is equal to $N=\Delta^\perp$
and a surjective projection from $\Lambda_G$ to $\Lambda'$ whose
kernel is equal to $N^\perp$.

Since the algebra $U_q(G^{ss}\times \tilde S_G)$ is clearly isomorphic
to $U_q(G^{ss})\otimes \mk [\tilde S_G]$, applying Lemma
\ref{lem:UGamma}, we deduce that $U_q(G)$ can be identified with
$\Big (U_q(G^{ss})\otimes \mk [\tilde S_G]\Big )^{\Gamma}.$

We now start to describe the center $ Z_q (G) $ of $U_q(G)$. We have

\begin{proposition} \label{prp:ZGamma}
Under the identification of $U_q(G)$ with $(U_q(G^{ss})\otimes \mk
[\tilde S_G])^{\Gamma},$  
$$ Z_q (G) =(Z_q(G^{ss}) \otimes \mk[\tilde S_G])^{\Gamma}= Z_q
(G^{ss}\times \tilde S_G)^\Gamma .$$ Moreover $Z_q (G^{ss}\times
\tilde S_G)$ is a free module over $Z_q(G)$ with  a basis   given by the elements
$K_\mu$ with $\mu$   varying in a set of representatives of
$(\Lambda_{G^{ss}}\oplus \Lambda') /\Lambda_G$.
\end{proposition}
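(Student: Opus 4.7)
The plan is to apply Lemma \ref{lem:UGamma} to the isogeny $G^{ss}\times \tilde S_G \twoheadrightarrow G$ with kernel $\Gamma$, which gives $U_q(G)=U_q(G^{ss}\times\tilde S_G)^{\Gamma}$. The first equality $(Z_q(G^{ss})\otimes\mk[\tilde S_G])^{\Gamma}=Z_q(G^{ss}\times\tilde S_G)^{\Gamma}$ is then an immediate formal consequence of the fact that the center of a tensor product $A\otimes B$ with $B$ commutative is $Z(A)\otimes B$, applied to $A=U_q(G^{ss})$ and $B=\mk[\tilde S_G]$.

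The substantive content is the identification $Z_q(G)=Z_q(G^{ss}\times\tilde S_G)^{\Gamma}$. The inclusion $\supseteq$ is essentially tautological: a $\Gamma$--invariant central element of the big algebra lies in $U_q(G_1)^{\Gamma}=U_q(G)$ and commutes a fortiori with the subalgebra $U_q(G)$. For the inclusion $\subseteq$, take $z\in Z_q(G)\subset U_q(G_1)^{\Gamma}$ and decompose it into Cartan weight components $z=\sum_{\nu}z_{\nu}$; by construction of $U_q(G_1)$ from the generators $E_i,F_i,K_{\mu}$, the occurring weights $\nu$ lie in $\mZ[\Delta]$, hence (after identifying $\Lambda_{G_1}=\Lambda_{G^{ss}}\oplus\Lambda'$) in the first summand. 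Centrality in $U_q(G)$ means $q^{(\lambda,\nu)}=1$ for every $\lambda\in\Lambda_G$ and every $\nu$ in the support of $z$; I need to upgrade this to $q^{(\mu,\nu)}=1$ for every $\mu\in\Lambda_{G_1}$.

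The key auxiliary observation is that the projection $\Lambda_G\to\Lambda_{G^{ss}}$ is surjective: an element $(\mu^{ss},\mu')\in\Lambda_{G^{ss}}\oplus\Lambda'$ lies in $\Lambda_G$ iff $\mu^{ss}|_{\Gamma}=\mu'|_{\Gamma}$ (viewing $\Gamma\subset\tilde S_G\cap G^{ss}$), and since $\Gamma$ is finite and $\tilde S_G$ is a torus, any character of $\Gamma$ extends to $\Lambda'$. Because the invariant form decomposes orthogonally on $\Lambda_{G^{ss}}\oplus\Lambda'$ (the center of $\g$ is orthogonal to $[\g,\g]$) and $\nu$ sits entirely in the $\Lambda_{G^{ss}}$ part, lifting $\mu^{ss}$ to $\lambda\in\Lambda_G$ gives $(\mu,\nu)=(\mu^{ss},\nu)=(\lambda,\nu)$, which yields the needed extension. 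This is the step I expect to be the main obstacle, as it is where the specific structure of the form and of $\Gamma$ actually enters.

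For the freeness statement, the same surjectivity of $\Lambda_G\to\Lambda_{G^{ss}}$ lets me choose representatives of $\Lambda_{G_1}/\Lambda_G$ entirely inside $\Lambda'$. For such $\mu\in\Lambda'$ one has $(\mu,\alpha_i)=0$ for all simple roots, so $K_{\mu}$ is central in $U_q(G_1)$. Lemma \ref{lem:UGamma} gives a direct sum decomposition $U_q(G_1)=\bigoplus_{\mu}U_q(G)\cdot K_{\mu}$. Given $z\in Z_q(G^{ss}\times\tilde S_G)$, write $z=\sum x_{\mu}K_{\mu}$ with $x_{\mu}\in U_q(G)$; since each $K_{\mu}$ is central and the $K_{\mu}$ are linearly independent modulo $U_q(G)$, centrality of $z$ forces each $x_{\mu}$ to commute with all of $U_q(G_1)$, so $x_{\mu}\in Z_q(G_1)\cap U_q(G)=Z_q(G_1)^{\Gamma}=Z_q(G)$ by the first part of the proposition. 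This gives both spanning and, by Lemma \ref{lem:UGamma}, uniqueness.
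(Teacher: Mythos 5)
Your proof is correct, and it rests on exactly the same two structural facts as the paper's proof: representatives of $\Lambda_{G_1}/\Lambda_G$ (where $G_1=G^{ss}\times\tilde S_G$) can be chosen inside $\Lambda'$, equivalently the restriction $\Lambda_G\to\Lambda_{G^{ss}}$ is surjective, and for $\mu\in\Lambda'$ the element $K_\mu$ is central in $U_q(G_1)$ because $(\mu,\alpha_i)=0$. For the inclusion $Z_q(G)\subseteq Z_q(G_1)^\Gamma$, however, the technical route differs. The paper never touches the $\mZ[\Delta]$-grading: it takes an arbitrary $x\in U_q(G_1)$, writes $x=\sum_\mu x_\mu K_\mu$ via Lemma \ref{lem:UGamma} with $\mu$ ranging over representatives in $\Lambda'$ and $x_\mu\in U_q(G)$, and observes that $z\in Z_q(G)$ commutes with each $x_\mu$ (being central in $U_q(G)$) and with each $K_\mu$ (being central in $U_q(G_1)$), hence with $x$; that is essentially two lines. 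Your route decomposes $z$ into $\mZ[\Delta]$-weight components and upgrades the eigenvalue condition $q^{(\lambda,\nu)}=1$ from $\lambda\in\Lambda_G$ to all $\lambda\in\Lambda_{G_1}$ via orthogonality of the invariant form on $\Lambda_{G^{ss}}\oplus\Lambda'$ and the lifting through the surjection. This is valid and has the merit of making explicit the role of the bilinear form (which the paper's argument never invokes), but it is longer, and you should state that commutation with $E_i,F_i$ is automatic since they lie in $U_q(G)$, so that commuting with all $K_\mu$ finishes the job --- as written the weight argument alone only shows $z$ centralizes the Cartan part of $U_q(G_1)$. Your freeness argument expands the paper's single-sentence assertion and is a welcome clarification; to be airtight you should spell out that the comparison of coefficients $x_\mu$ uses the $\Gamma$-grading of $U_q(G_1)$ coming from Lemma \ref{lem:UGamma}, which is what lets you read off each $x_\mu$ from the graded components of $uz=zu$.
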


\begin{proof}
It is   clear that $ Z_q (G) \supset Z_q (G^{ss}\times \tilde
S_G)^\Gamma $.  We prove the other inclusion. Notice that we can choose
a set $A$ of representatives of $(\Lambda_{G^{ss}}\oplus \Lambda') /\Lambda_G$ in the lattice
$\Lambda'$.    Take $x \in U_q(G^{ss}\times
\tilde S_G)$. By Lemma \ref{lem:UGamma}, $x$  can be written as $x=\sum_{\mu} x_\mu K_\mu$ with $\mu \in A$ and $x_\mu\in U_q(G)$. Since the
$K_\mu$ are central,   if $z \in Z_q(G)$, then it commutes
also with $x$. Hence $z \in Z_q (G^{ss}\times \tilde S_G)$.

Our argument also proves that the elements $K_\mu$ with $\mu \in A$ are a basis of $
Z_q (G^{ss}\times \tilde S_G)$ over $Z_q(G)$.
\end{proof}

We now give a geometric description of $Z_q(G)$ in the case $G$ has
simply connected semisimple factor and $q$ is a primitive $\ell$-th
root of $1$ with $\ell$ prime with the entries of the Cartan matrix. This description
generalizes the one given in \cite{dprr}. Set $X_G=\Spec Z_q(G)$. We
will use the notations introduced in sections \ref{sez:app1} and
\ref{sez:stein}. So $H_G=H$ is the Poisson dual of $G$,
$\rho_G=\rho:H_G\lra G$ is the map $\rho_G(u,v)=uv^{-1}$, $W_G$ is the
Weyl group of $G$ w.r.t.\ $T_G$, $q_G^{ss}:G\lra G^{ss}/\!/Ad(G^{ss})
= T_{G^{ss}}/W_G$ is an extension of the adjoint quotient of $G^{ss}$
as constructed in section \ref{ssec:Steinbergred}. If we set $p_{S_G}:
G\lra S_G $ to be  the quotient of $G$ by its semisimple factor, then
$q_G=q^{ss}_G\times p_{S_G} : G \lra (T_{G^{ss}}/W_G) \times S_G$ is the
quotient map under the adjoint action and  
$\theta_G=q_G\circ\rho_G$. Notice that $p_{S_G}$ induces an isogeny from
$\tilde S_G$ and $S_G$ whose kernel is equal to $\Gamma$ and that, as
we did in section \ref{ssec:Steinbergred}, we can identify $S_G$ with a
complement of $T_{G^{ss}}$ in $T$. When necessary (mainly for the
construction of the Steinberg section as in Section
\ref{ssec:Steinbergred}) we will identify $S_G$ with this
torus. Finally, we denote by $\eta_G:T_G/W_G\lra T_G/W_G$ the map
induced by $t\mapsto t^\ell$ from $T_G\lra T_G$. This is a finite flat
covering of $T_G/W_G$ and it is an unramified Galois covering over the set
$(T_G/W_G)^r=T_G^r/W_G$ where $T_G^r$ is the set of regular elements
of $T_G$.

Recall, \cite{DK}, that $Z_q(G)$ is endowed with a Poisson bracket and contains
the elements $E_i^\ell, F_i^\ell, K_\lambda^\ell=K_{\ell\lambda}$, for
all $i=1,\ldots r$, $\lambda\in\Lambda.$ Let $Z_0(G)$ be the smallest
subalgebra of $U_q(G)$ closed under the Poisson bracket and containing
the elements $E_i^\ell, F_i^\ell, K_\lambda^\ell$. 

\medskip

We now recall the description of $X_G$ given in \cite{dprr} in the
simply connected case. In this case the subalgebra $Z_0(G)$ is
isomorphic to the coordinate ring of the Poisson dual $H_G$ of $G$.
There exists also another subalgebra $Z_1(G)$ in $Z_q(G)$ which is
isomorphic to the ring of functions on $T_G$ invariant under the
action of $W_G$. The Poisson bracket is trivial on
$Z_1(G)$.

The map $\eta_G$ induces a map $\eta_G^*$ from $Z_1(G)$ to $Z_1(G)$.
We denote by $Z_1^{(\ell)}(G)$ its image. Identifying
$\mk[T_G/W_G]$ with $Z_1^{(\ell)}(G)$ and $\mk[H_G]$ with $Z_0(G)$ we
get an inclusion $Z_1^{(\ell)}(G)\subset Z_0(G)$. One
then shows (see e.g. \cite{dp}) that $$Z_q(G)\simeq
Z_0(G)\otimes_{Z_1^{(\ell)}(G)}Z_1(G).$$ Equivalently, setting
$X_G=\Spec Z_q(G)$, if $G$ is simply connected, the following diagram
is cartesian
\begin{equation}\label{eq:Zsc}
\begin{CD} 
    X_G @>{\zeta^1_G}>> T_G/W_G \\
    @V{\zeta_G}VV @VV{\eta_G}V \\
    H_G @>{\theta_G}>> T_G/W_G
\end{CD}
\end{equation}
where we denoted by $\zeta_G:X_G\to H_G$ the finite morphism of degree
$\ell^{\dim T_G}$ induced by the inclusion of $Z_0(G)$ in $Z_q(G)$ and
by $\zeta_G^1:X_G\lra T_G/W_G$ the one induced by the inclusion of $Z_1(G)$ in
$Z_q(G)$.

\medskip

We now give a similar description in the case in which the semisimple
factor of $G$ is simply connected. Before stating our result, we make
some remarks on the action of $\Gamma$ on the objects introduced so
far.

We consider the actions of $\Gamma$
on $H_{G^{ss}}\times \tilde S_G$ and on $(T_{G^{ss}}/W_G) \times \tilde
S_G$ given by 
$$
\gamma\cdot \big((u,v),s\big) = \big(\gamma\, u,\gamma^{-1} v, \gamma^{-1}s) 
\quad \mand \quad 
\gamma\circ (tW_G,s) = (\gamma^2tW_G, \gamma^{-1}s).
$$ With these actions, the map $\theta_{G^{ss}} \times id:
H_{G^{ss}}\times \tilde S_G \lra (T_{G^{ss}}/W_G) \times \tilde S_G$ is
equivariant while the map $\eta_{G_1}$ satisfies $\eta_{G_1}(\gamma
\circ x)= \gamma^\ell \circ \eta_{G_1}(x)$. Moreover, we consider
$\phi_0:H_{G^{ss}}\times \tilde S_G\lra H_G$ given by
$\phi_0\big((u,v),s\big) = \big(us,vs^{-1}\big)$ and we notice that
this is the quotient map by the action of $\Gamma$.

By what we have said about the semisimple case, we can identify the
coordinate ring of $H_{G^{ss}}\times \tilde S_G$ with $Z_0(G^{ss}\times
\tilde S_G)$, where $K_{\ell \lambda}$, for $\lambda \in \Lambda'$,
corresponds to the character $\lambda$ on $\tilde S_G$. Then the
 action of $\Gamma$ corresponds to the following action  on
$Z_0(G^{ss}\times \tilde S_G)$:
$$
\gamma \cdot E^{\ell}_i= E^{\ell}_i \qquad \;
\gamma \cdot F^{\ell}_i= F^{\ell}_i \qquad \;
\gamma \cdot K_{\ell \lambda} = \gamma (\lambda) K_{\ell\lambda}
$$ for all $\lambda \in \Lambda_{G_1}$ and for all $i$.  Notice that
with this action we have $ Z_0(G)= (Z_0(G^{ss}\times \tilde S_G))^{\Gamma}
$. Hence 
$$ \Spec Z_0(G) \simeq (H_{G^{ss}} \times \tilde S_G) /\Gamma \simeq
H_G $$ where the last isomorphism is given by $\phi_0$.

Now we want to describe the subalgebra $Z_1(G) := (Z_1(G^{ss})\otimes
\mk[\tilde S_G])^\Gamma$ where we are now considering the usual action
of $\Gamma$ on $U_q(G^{ss}\times \tilde S_G)$. Notice that this makes
sense since the subring $Z_1(G^{ss}) \otimes \mk[\tilde S_G]$ is stable
under $\Gamma$.

\begin{lemma}\label{lem:Z1}
There exists $\phi_1:(T_{G^{ss}}/W_G)\times \tilde S_G \lra (T_{G^{ss}}/W_G) \times S_G$
such that the following properties hold:
\begin{enumerate}[\indent i)]
\item $\phi_1$ is $\Gamma$ invariant with respect to the $\circ$-action,
  more precisely it is the quotient of $(T_{G^{ss}}/W_G)\times \tilde S_G$ by the $\circ$-action;
\item $\phi_1(x,s) = (\psi(x,s),p_{S_G}(s))$ with $\psi(x,s) \in T_{G^{ss}}/W_G$;
\item the following diagram is commutative and cartesian
\begin{subequations}\label{eq:diagrammi}
\begin{equation}\label{eq:d1}
\begin{CD}
H_{G^{ss}}\times \tilde S_G @>{\theta_{G^{ss}} \times id }>> (T_{G^{ss}} /W_G) \times \tilde S_G  \\
@V{\phi_0}VV @VV{\phi_1}V \\
H_G @>{\theta_{G}}>> (T_{G^{ss}}/W_G)\times S_G;
\end{CD}
\end{equation}
\item the following diagram is commutative
\begin{equation}\label{eq:d2}
\begin{CD}
(T_{G^{ss}}/W_G)\times \tilde S_G 
@>{\eta_{G^{ss}}\times \eta_{\tilde
    S_G}}>> T_{G^{ss}} /W_G \times \tilde S_G \\ @V{\phi_1}VV
@VV{\phi_1}V \\ (T_{G^{ss}}/W_G)\times S_G @>{\eta_G}>>
(T_{G^{ss}}/W_G)\times S_G.
\end{CD}
\end{equation}
\end{subequations}
\end{enumerate}
\end{lemma}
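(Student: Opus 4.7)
The plan is to construct $\phi_1$ so that diagram (\ref{eq:d1}) commutes by construction, and then derive the remaining properties. First I would verify that $\theta_{G^{ss}} \times \mathrm{id}$ intertwines the $\Gamma$-action on $H_{G^{ss}} \times \tilde S_G$ from the quotient realization $\phi_0$ with the $\circ$-action on the target. The crucial computation is
$$\theta_{G^{ss}}(\gamma u, \gamma^{-1} v) = q_{G^{ss}}\big(\gamma u \cdot (\gamma^{-1} v)^{-1}\big) = q_{G^{ss}}(\gamma^2 u v^{-1}) = \gamma^2 \cdot \theta_{G^{ss}}(u, v),$$
the last equality using that $\gamma \in \Gamma$ is central in $G$ and fixed by $W_G$. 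This explains the appearance of $\gamma^2$ in the $\circ$-action and is the key equivariance behind the whole construction.

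Since $\phi_0$ is the quotient by $\Gamma$, the composition $\theta_G \circ \phi_0$ is automatically $\Gamma$-invariant and hence factors through the $\circ$-quotient of $(T_{G^{ss}}/W_G) \times \tilde S_G$. I would define $\phi_1$ as the unique map making the square commute, and then identify the target with $(T_{G^{ss}}/W_G) \times S_G$: on the $\tilde S_G$-component the $\circ$-action reduces to $s \mapsto \gamma^{-1} s$, which realizes the $S_G$-component as $\tilde S_G/\Gamma$ via the isogeny $p_{S_G}$, yielding (ii) simultaneously. The formula for $\psi(x,s)$ emerges from the computation $\rho_G(\phi_0((u,v),s)) = s^2 u v^{-1}$ (using centrality of $s$) and tracking how the central factor $s^2$ enters the adjoint quotient through the translation action of $\tilde S_G$ on $T_{G^{ss}}/W_G$ coming from central characters.

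For (i) and (iii), the $\circ$-invariance of $\phi_1$ is automatic from the construction; checking that $\phi_1$ is genuinely the $\circ$-quotient (not merely invariant) reduces to a degree count, which works out since $\phi_0$ is the $\Gamma$-quotient, $\theta_{G^{ss}}$ is surjective, and $\theta_G$ is dominant. For the square to be cartesian, I would use that both vertical maps are quotients by $\Gamma$ acting compatibly on source and target, so the diagram is a pullback.

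The verification of (iv) should be essentially formal: each map in play --- the translation action of $\tilde S_G$ on $T_{G^{ss}}/W_G$, the isogeny $p_{S_G}$, and the $\ell$-th power maps $\eta_G$, $\eta_{G^{ss}}$, $\eta_{\tilde S_G}$ --- commutes with raising to the $\ell$-th power in the appropriate sense, so one checks (\ref{eq:d2}) componentwise. The main obstacle I anticipate is careful bookkeeping: the target $(T_{G^{ss}}/W_G) \times S_G$ must be simultaneously identified with the $\circ$-quotient, with the adjoint quotient $G/\!/Ad(G)$, and with the codomain fixed by the conventions of Section \ref{ssec:Steinbergred}; matching these identifications relies on the choice of complement $S_G \subset T$ and on the specific extension of $q_{G^{ss}}$ to $q_G^{ss}$ that was constructed earlier.
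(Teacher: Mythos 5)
Your overall strategy differs from the paper's: you want to produce $\phi_1$ abstractly, as the map induced on quotients by $\theta_G\circ\phi_0$, whereas the paper writes $\phi_1$ down explicitly via the $\Gamma$-invariant functions $f_i(x,s)=\xi_i(x)\,\xi'_i(s^2)$ together with $p_{S_G}$ (where $\xi_i$ are the fundamental characters of $G^{ss}$ and $\xi'_i$ is the chosen extension of the corresponding central character). Your key equivariance computation $\theta_{G^{ss}}(\gamma u,\gamma^{-1}v)=q_{G^{ss}}(\gamma^2 uv^{-1})=\gamma^2\cdot\theta_{G^{ss}}(u,v)$ is correct and does explain where the $\gamma^2$ in the $\circ$-action comes from.

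However, there is a genuine gap in the step where you assert existence of $\phi_1$. You write that $\theta_G\circ\phi_0$ is $\Gamma$-invariant ``and hence factors through the $\circ$-quotient of $(T_{G^{ss}}/W_G)\times\tilde S_G$.'' $\Gamma$-invariance of $\theta_G\circ\phi_0$ on $H_{G^{ss}}\times\tilde S_G$ only gives a factorization through the $\Gamma$-quotient $H_G$ of that source, which is nothing new. To define $\phi_1$ you must first factor $\theta_G\circ\phi_0$ through the (non-injective, non-quotient) map $\theta_{G^{ss}}\times\mathrm{id}$; this requires showing that $q_G^{ss}(us^2v^{-1})$ depends only on $q_{G^{ss}}(uv^{-1})$ and $s$, i.e.\ it is constant on the large fibers of $\theta_{G^{ss}}$. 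That is precisely the content of the paper's identity $\xi'_i(uv^{-1}s^2)=\xi_i(uv^{-1})\,\xi'_i(s^2)$, which uses that $s^2\in\tilde S_G$ is central and that $\xi'_i$ restricted to $\tilde S_G$ is the (scalar) central character. You mention $\rho_G(\phi_0((u,v),s))=us^2v^{-1}$ and the translation action via central characters, but you present that computation as a way to ``derive the formula for $\psi$'' after $\phi_1$ has already been defined; logically it is needed before, to make the definition of $\phi_1$ legitimate. Reordering and carrying out that computation (in effect reproducing the paper's $f_i$'s) would close the gap, and with it your cartesianity argument via compatible free $\Gamma$-quotients and the formal componentwise check of diagram \eqref{eq:d2} go through.
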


\begin{proof}
Let $\xi_1,\dots,\xi_n$ be the characters of the fundamental
representations of $G^{ss}$.  Then the coordinate ring of
$T_{G^{ss}}/W_G$ is the polynomial ring in $\xi_1,\dots,\xi_n$.  As in
section \ref{ssec:Steinbergred} we can extend these characters to get
characters of $G$ that we denote with $\xi'_1,\dots, \xi_n'$. Notice
that by definition the map $q_G^{ss}$ is given by evaluating
$(\xi'_1,\dots,\xi'_n)$. Now define $f_i: G^{ss} \times \tilde S_G\to \mk$
 by
$$
f_i(x,s) = \xi_i(x)\xi'_i(s^2). 
$$
Then $f_i$ is $\Gamma$ invariant and  
$$\mk[G^{ss}/\!/Ad(G^{ss}) \times \tilde S_G]\simeq
\mk[f_1,\dots,f_n]\otimes \mk[\tilde S_G]$$ 
since $\xi'_i(s)$ is a character of $\tilde S_G$. 
Taking invariants, we obtain 
$$
\mk[G^{ss}/\!/Ad(G^{ss}) \times \tilde S_G]^{(\Gamma,\circ )}\simeq
\mk[f_1,\dots,f_n]\otimes \mk[S_G].
$$ 
Now define $\phi_1$ by
$$
\phi_1(x,s ) = \big(\big(f_1(x,s),\dots,f_n(x,s), p_{S_G}(s) \big).
$$ By the description of the invariants $\phi_1$  satisfies $i)$. 

$ii)$
is clear by definition. 

To prove $iii)$, notice  that for all
$\big((u,v),s\big) \in H_{G^{ss}}\times \tilde S_G$, we have
$$
\theta_G\left(\phi_0\big((u,v),s\big)\right)=
\theta_G(us,vs^{-1})=\big(q_{G}^{ss}(us^2v^{-1}),p_{S_G}(s)\big)
$$
and, since $\tilde S_G$ is central,
\begin{align*}
q_{G}^{ss}(us^2v^{-1}) &= \big(\xi'_1(uv^{-1}s^2),
\dots,\xi'_n(uv^{-1}s^2)\big) \\ &=
\big(f_1(q_{G^{ss}}(uv^{-1}),s),\dots,f_n(q_{G^{ss}}(uv^{-1}),s)\big)
\end{align*}
from which   the commutativity of diagram \eqref{eq:d1} follows.
Since the vertical maps in that diagram are   quotients by $\Gamma$, the varieties are
smooth and the action on the the varieties in the top line is free we
get that it is also cartesian.

The commutativity of \eqref{eq:d2} is clear.
\end{proof}

The previous Lemma implies that $\Spec Z_1(G) \simeq (T_{G^{ss}}/W_G) \times S_G$.  As we have done in 
the simply connected case, we denote by $\zeta_G:X_G\lra H_G$, and 
$\zeta^{1}_G:X_G \lra (T_{G^{ss}}/W_G) \times S_G$ the maps 
induced by the inclusion $Z_0(G)\subset Z_q(G)$ and 
$Z_1(G)\subset Z_q(G)$ respectively.

We can now give our second description of the center of $U_q(G)$.

\begin{proposition}\label{prp:Z}
Let $G$ be reductive with simply connected semisimple factor. Then
the following diagram is cartesian
\begin{equation}\label{eq:Z}
 \begin{CD} X_G @>{\zeta^1_G}>> (T_{G^{ss}}/W_G) \times S_G\\
  @V{\zeta_G}VV @VV{\eta_G}V \\
 H_G @>{\theta_G}>> (T_{G^{ss}}/W_G) \times S_G
 \end{CD}
\end{equation}
\end{proposition}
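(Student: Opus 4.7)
The plan is to deduce the proposition from the simply connected case \eqref{eq:Zsc} by passing to the cover $G_1 := G^{ss}\times\tilde S_G$ of $G$ with kernel $\Gamma$, and then descending by the $\Gamma$-quotient.

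First I would prove the analogue of \eqref{eq:Z} for $G_1$. Since $G_1$ is a direct product, $U_q(G_1)\simeq U_q(G^{ss})\otimes\mk[\tilde S_G]$ and consequently $Z_q(G_1)\simeq Z_q(G^{ss})\otimes\mk[\tilde S_G]$, $Z_0(G_1)\simeq Z_0(G^{ss})\otimes Z_0(\tilde S_G)$, and $Z_1(G_1)\simeq Z_1(G^{ss})\otimes\mk[\tilde S_G]$. Substituting the simply connected decomposition \eqref{eq:Zsc} for $G^{ss}$ and observing that for the torus factor the analogous $\eta$ is simply the $\ell$-th power map on $\tilde S_G$, one obtains a cartesian square
\begin{equation*}
\begin{CD}
X_{G_1} @>{\zeta^1_{G_1}}>> (T_{G^{ss}}/W_G)\times\tilde S_G \\
@V{\zeta_{G_1}}VV @VV{\eta_{G^{ss}}\times\eta_{\tilde S_G}}V \\
H_{G^{ss}}\times\tilde S_G @>{\theta_{G^{ss}}\times\mathrm{id}}>> (T_{G^{ss}}/W_G)\times\tilde S_G.
\end{CD}
\end{equation*}

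Next I would verify that $\Gamma$ acts compatibly on all four corners of this square: on $X_{G_1}$ via the Hopf algebra action on $U_q(G_1)$, on $H_{G^{ss}}\times\tilde S_G$ via the action entering the definition of $\phi_0$, and on $(T_{G^{ss}}/W_G)\times\tilde S_G$ via the $\circ$-action of Lemma \ref{lem:Z1}; with these actions all four arrows of the square become $\Gamma$-equivariant. Proposition \ref{prp:ZGamma} identifies $X_{G_1}/\Gamma$ with $X_G$, the map $\phi_0$ identifies $(H_{G^{ss}}\times\tilde S_G)/\Gamma$ with $H_G$, and the map $\phi_1$ identifies $((T_{G^{ss}}/W_G)\times\tilde S_G)/\Gamma$ with $(T_{G^{ss}}/W_G)\times S_G$. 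The commutative diagrams \eqref{eq:d1} and \eqref{eq:d2} then ensure that the induced maps on quotients are exactly $\theta_G$, $\eta_G$, together with the corresponding $\zeta_G$ and $\zeta^1_G$.

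Finally, $\Gamma$ is a finite subgroup of the torus $\tilde S_G$, hence acts freely on $\tilde S_G$ and therefore on every corner of the diagram through the second factor. Since free quotients by finite groups commute with cartesian squares (the $\Gamma$-torsors $C\to C/\Gamma$ provide an étale cover along which the two sides pull back to the same scheme), the quotient square is cartesian; its corners and arrows are, as just identified, exactly those of \eqref{eq:Z}. The main technical obstacle lies in the second step: one must check that the Hopf algebra action of $\Gamma$ on $Z_1(G_1)$, transported through the isomorphism $Z_1(G^{ss})\simeq\mk[T_{G^{ss}}/W_G]$, coincides with the $\circ$-action on $(T_{G^{ss}}/W_G)\times\tilde S_G$. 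The factor $\gamma^2$ in the $\circ$-action (rather than $\gamma$) reflects the $2\rho$-shift in the quantum Harish-Chandra isomorphism identifying $Z_1(G^{ss})$ with $\mk[T_{G^{ss}}/W_G]$, and Lemma \ref{lem:Z1} has been formulated precisely so that this matching occurs.
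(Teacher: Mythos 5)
Your plan --- descend the simply-connected cartesian square along a free $\Gamma$-quotient --- is a reasonable alternative to the paper's argument, but the key equivariance claim is false, and you have misidentified where the subtlety lies. The step you flag as the ``main technical obstacle'' (matching the Hopf action on $Z_1(G_1)$ with the $\circ$-action on $(T_{G^{ss}}/W_G)\times\tilde S_G$) does in fact work out. The genuine problem is with the two \emph{vertical} arrows. The paper states explicitly that $\eta_{G_1}=\eta_{G^{ss}}\times\eta_{\tilde S_G}$ satisfies $\eta_{G_1}(\gamma\circ x)=\gamma^\ell\circ\eta_{G_1}(x)$, so $\eta_{G_1}$ is \emph{not} $\circ$-equivariant: it intertwines the action with the $\ell$-th power endomorphism of $\Gamma$. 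The same twist shows up in $\zeta_{G_1}$: under $Z_0(G_1)\simeq\mk[H_{G_1}]$ the generator $K_{\ell\mu}$ corresponds to the character $\mu$ (not $\ell\mu$), so the Hopf action $\gamma\cdot K_{\ell\mu}=\gamma(\ell\mu)K_{\ell\mu}$ is the geometric $\cdot$-action of $\gamma^\ell$, whereas the action for which $\phi_0$ is the quotient is $\gamma\cdot K_{\ell\mu}=\gamma(\mu)K_{\ell\mu}$. These agree only when $\gamma^{\ell-1}=1$ for every $\gamma\in\Gamma$, which is not the generic situation.

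Because both vertical maps twist the action by $\gamma\mapsto\gamma^\ell$, you cannot put a single $\Gamma$-action on the four corners that makes the whole square equivariant and then invoke ``free quotient commutes with cartesian square.'' When $\gcd(\ell,|\Gamma|)=1$ the argument can be repaired: re-parametrize the actions on $H_{G_1}$ and on the bottom-right copy of $(T_{G^{ss}}/W_G)\times\tilde S_G$ through the automorphism $\gamma\mapsto\gamma^\ell$ (this changes only the labelling of orbits, so all four quotients are unchanged, and now every arrow is strictly equivariant). Without that coprimality the Hopf action restricted to $Z_0(G_1)$ is not free and its ring of invariants is strictly larger than $Z_0(G)$, so the descent picture simply does not produce diagram \eqref{eq:Z}. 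The paper's actual proof avoids all of this: it forms the fibre product $Y$ of $\eta_G$ and $\theta_G$, shows $Y$ is irreducible, normal, Cohen--Macaulay and of degree $\ell^{\dim T_G}$ over $H_G$ (arguing as in \cite{dprr}), produces the comparison map $X_G\to Y$ from the commutativity of \eqref{eq:diagrammi}, and concludes it is an isomorphism because it is a finite birational morphism onto a normal variety. That degree-and-normality argument is insensitive to the $\ell$-th power twist that derails the naive descent.
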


\begin{proof}
If $G=G^{ss}\times \tilde S$ with $\tilde S$ a torus then $\tilde S$
is also the connected component of the center of $G$ and by
Proposition \ref{prp:ZGamma}, we immediately have that
$Z_q(G)=Z_q(G)\otimes \mk [\tilde S]$. Hence $X_G= X_{G^{ss}} \times \tilde S$.  We
can write a diagram similar to \eqref{eq:Zsc}.  Notice that in this
case $H_G=H_{G^{ss}}\times \tilde S$, $T_G=T_{G^{ss}}\times \tilde S$ and
$T_G/W_G=(T_{G^{ss}}/W_G) \times \tilde S$. Also the $\ell$ power map $\eta_G$
is the product of the two power maps $\eta_{G^{ss}}$ and $\eta
_{\tilde S}:\tilde S\lra \tilde S$. Then we have the following cartesian diagram:
\begin{equation*}\label{eq:Zprodotto}
\begin{CD}
X_G@>>>  (T_{G^{ss}}/W_G) \times \tilde S\\
@V{\zeta_G}VV    @VV{\eta_{G^{ss}} \times \eta_S}V\\
H_{G^{ss}}\times \tilde S @>{\theta_{G^{ss}}\times id}>> (T_{G^{ss}}/W_G) \times \tilde S.
\end{CD}
\end{equation*}
So, in this case, everything is an immediate consequence of what we know for $G^{ss}$.

\medskip

Now we consider the case of an arbitrary reductive group $G$ with
simply connected semisimple factor. Let  $\phi: G^{ss}\times \tilde S_G \lra G$ be the
multiplication map. By Proposition \ref{prp:ZGamma} we have that $X_G=
X_{G^{ss}\times \tilde S_G}/\Gamma$. We denote by $Y$ the pull back of the maps $\eta_G,
\theta_G$ and we prove that it is isomorphic to $X_G$.

Arguing exactly as in \cite{dprr}, one sees that $Y$ is irreducible,
normal, Cohen-Macaulay, and the map $Y\lra H_G$ has degree $\ell^
{\dim T_G}$. By the commutativity of the diagrams in
\eqref{eq:diagrammi} we have a natural map $\psi: X_G \lra Y$.


Moreover the morphism $\psi$ is finite and since the morphism
$\zeta_G:X_G\lra H_G$ has also degree $\ell^ {\dim T_G}$ it is also
birational, hence it is an isomorphism.
\end{proof}

\section{Branching rules for quantum groups at roots of 1}
In this section we are going to show how to obtain some branching
rules for quantum groups at roots of $1$ following the ideas of
\cite{dprr}. In order to do this, let us recall the notion of a
Cayley-Hamilton algebra and some results on the representation
theory of quantum groups.

\subsection{Cayley-Hamilton algebras}
An algebra with trace, over a commutative ring $A$ is an associative
algebra $R$ with a 1-ary operation
 \[ t:R\to R \]
which is assumed to satisfy the following axioms:
 \begin{enumerate}
    \item  $t$ is $A-$linear.
    \item  $t(a)b=b\,t(a), \quad\forall a,b\in R.$
    \item  $t(ab)=t(ba), \quad\forall a,b\in R.$
    \item  $t(t(a)b)=t( a)t(b), \quad\forall a,b\in R.$
 \end{enumerate}
 This operation is called a formal trace.

The ring of $n\times n$ matrices with entries in $A$ with the usual trace is an example. 

 At this point we will restrict the discussion to the case in which $A$
 is a field of characteristic 0. We remark that there are universal
 polynomials $P_i(t_1,\dots,t_i)$ with rational coefficients, such
 that the characteristic polynomial  $\chi_M(t):=\det(t-M)$ of a $n\times n$ matrix $M$ can be written as:
 $$ \chi_M(t)=t^n+\sum_{i=1}^nP_i(tr(M),\dots,tr(M^i))t^{n-i}. $$

 We can thus formally define, in an algebra with trace $R$, for every
 element $a$, a formal $n-$characteristic polynomial:
 \[
 \chi_a^n(t):=t^n+\sum_{i=1}^nP_i(t (a),\dots,t (a^i))t^{n-i}.
 \]
 The Cayley-Hamilton Theorem, stating  that a matrix  $M$  satisfies its
 characteristic polynomial, suggests  the
  following definition.

\begin{definition}(\cite{dprr} Definition 2.5)
 An algebra with trace $R$ is said to be an $n$-Cayley-Hamilton algebra,
 or to satisfy the $n^{th}$ Cayley-Hamilton identity if:

 1) $t(1)=n$.

 2) $\chi_a^n(a)=0, \ \forall a\in R$.
\end{definition}

In \cite{dprr} , Theorem 4.1. it is proved that if a $\mk$ algebra
$R$ is a domain which is a finite module over its center $A$, and
$A$ is integrally closed in his quotient field $F$, then $R$ is a
$n$-Cayley-Hamilton algebra with $n^2=\dim_F R\otimes_AF$.

As usual, let now $G$ be a reductive group with simply connected semisimple
factor, $\ell$ be a natural number prime with the entries of the
Cartan matrix and $q$ a primitive $\ell$-th root of unity.

The algebra $U_q(G)$ is a domain. Indeed this is shown in \cite{DK},
\cite{DKP} for $G$ semisimple and follows for a general reductive $G$
from the fact that $U_q(G)=(U_q(G^{ss})\otimes \mk [\tilde
  S_G])^{\Gamma}.$ Furthermore the fact that the center $Z_q(G)$ is
integrally closed is immediate from Proposition \ref{prp:ZGamma} since
$ Z_q (G) =(Z_q(G^{ss}) \otimes \mk[\tilde S_G])^{\Gamma}$ and
$Z_q(G^{ss}) \otimes \mk[\tilde S_G]$ is integrally closed. Thus
$U_q(G)$ is a $n$-Cayley-Hamilton algebra and by Lemma
\ref{lem:UGamma} and Proposition \ref{prp:ZGamma} it follows that  $n=\ell^{|\Phi^+|}$.

As a consequence (see \cite{dprr} Theorem 3.1), the variety
$X_G$ parametrizes semisimple representations compatible
with the trace (i.e. representation of $U_q(G)$ such that the formal
trace coincide with the trace computed by considering the action of
the element of $U_q(G)$ on the representation). Furthermore there is a
dense open set $X_G^{ir}$ of $X_G$ such that for any $x\in X_G^{ir}$ the
corresponding trace compatible representation is   irreducible
and it is the unique irreducible representation on which $Z_q(G)$ acts
via the evaluation on $x$. We denote this representation by $V_x(G)$.

We define also $G^{sr}=q_G^{-1}((T/W)^r)$ to be the open set of
semisimple elements of $G$, $H_G^{sr}=\rho_G^{-1}(G^{sr})$ and
$X_G^{sr}=\zeta^{-1}(H^{sr})$. Hence by Proposition \ref{prp:Z}
$\zeta_G:X^{sr}_G\lra H_G^{sr}$ is an unramified covering of degree
$\ell^{\dim T_G}$. Finally by \cite{DKP}, it is known that
$X_G^{sr}\subset X_G^{ir}$.

\subsection{Branching rules}
Let $L$ be a standard Levi subgroup of $G$ and let $M\subset G$ be
an admissible subgroup compatible with $L$ with the property that
there exists a homomorphism $\grs:L\to M$ which splits the inclusion
$M\subset L$.  

This is satisfied in the following two examples which are the main
applications we have in mind. The first is when $G$ is semisimple
simply connected and $L=M$. The second is when $G=GL(n)$,
$$L=\left\{\left(\begin{array}{cc}A & 0 \\0 & b\end{array}\right)\, |
A\in GL(n-1),\ b\in \mathbb C^*\right\},$$ and
$$M=\left\{\left(\begin{array}{cc}A & 0 \\0 & 1\end{array}\right)\, | A\in GL(n-1)\right\}.$$

Let $T_M = T \cap M$ be a maximal torus of $M$ and let $R= \ker
\sigma$ be such that $T=R\times T_M$. We can identify the algebra
$U_q(M)$ with the subalgebra of $U_q(G)$ generated by the elements
$K_{\lambda}$, $\lambda$ vanishing on $R$, and by the element $E_i,
F_i$ with $\alpha_i\in \Delta_L$.  Our goal is to describe how
``generic" irreducible representations of $U_q(G)$ decompose when
restricted to $U_q(M)$.

The discussion of the previous section about the representations and
the center of $U_q(G)$ applies  to $U_q(M)$ as well.  Hence $U_q(M)$ is
itself a $n_L$-th Cayley-Hamilton algebra, this time with
$n_L=\ell^{|\phi_L^+|}$.  

We define $\Psi_M:H_G \lra H_L $ as 
$$
\Psi_M(u,v) = \left( \grs \left( \pi_L^+(u)\right) , \grs \left( \pi_L^-(v)\right) \right).
$$ 
Then, under the identification of $Z_0(G)$ and $Z_0(M)$ with the
coordinate rings of $H_G$ and $H_M$ respectively,     the inclusion of
$Z_{0}(M)$ in $Z_0(G)$ corresponds to the map $\Psi_M^*$.

For $x \in X_G$ we define

$$\calM_x = \{y\in X_M \st \zeta_M(y)=\Psi_M(\zeta_G(x))\}.$$ 

\begin {theorem} \label{branch} Let $x\in X_G^{ir}$ be such that 
$\Psi_M(\zeta_G(x))\in H_M^{sr}$.
Then, taking the associated graded for any Jordan-H\"older filtration
of $V_x(G)$ we get:
$$Gr \big(V_x(G)\big)|_{U_q(M)}\simeq \bigoplus_{y\in \calM_x}V_y(M)^{\bigoplus b}$$
with $b=\ell^{|\Phi^+|-|\Phi^+_M|-\dim T_M}$.
\end{theorem}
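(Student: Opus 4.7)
The plan is to use the compatibility of centres to decompose $V_x(G)$ into joint $Z_q(M)$-eigenspaces, then to identify the Jordan-H\"older factors and the multiplicities. First, observe that $Z_0(M)$ is naturally a subalgebra of $Z_q(G)$: the generators $E_\alpha^\ell, F_\alpha^\ell$ of $Z_0(M)$ with $\alpha\in\Phi_M^+$ already lie in $Z_0(G)$, because the $\ell$-th powers of root vectors are central in $U_q(G)$; the $K_\lambda^\ell$ with $\lambda\in\Lambda_M$ are central in $U_q(G)$ as well, because the splitting $T=R\times T_M$ realises $\Lambda_M$ as a sublattice of $\Lambda_G$. Under the identifications $\Spec Z_0(G)\simeq H_G$ and $\Spec Z_0(M)\simeq H_M$, the resulting inclusion $Z_0(M)\hookrightarrow Z_0(G)$ is dual to the map $\Psi_M$. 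Since $x\in X_G^{ir}$, the algebra $Z_q(G)$ acts on $V_x(G)$ by evaluation at $x$; in particular $Z_0(M)$ acts by the character $\Psi_M(\zeta_G(x))$.

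By Proposition \ref{prp:Z} applied to $M$, together with the fact that the $\ell$-th power map $\eta_M$ is unramified over regular semisimple classes, $\zeta_M$ is \'etale over $H_M^{sr}$ of degree $\ell^{\dim T_M}$. The hypothesis $\Psi_M(\zeta_G(x))\in H_M^{sr}$ then gives $Z_q(M)\otimes_{Z_0(M)}\mk_{\Psi_M(\zeta_G(x))}\simeq\prod_{y\in\calM_x}\mk_y$, and the corresponding primitive idempotents produce a $U_q(M)$-stable decomposition $V_x(G)=\bigoplus_{y\in\calM_x}V_{x,y}$ on which $Z_q(M)$ acts through the character $y$. For each $y\in X_M^{sr}\subset X_M^{ir}$, the unique irreducible $U_q(M)$-module with $Z_q(M)$-character $y$ is $V_y(M)$, of dimension $\ell^{|\Phi_M^+|}$, so every Jordan-H\"older factor of $V_{x,y}$ is isomorphic to $V_y(M)$. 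Writing $\dim V_{x,y}=b_y\cdot\ell^{|\Phi_M^+|}$ one obtains $\mathrm{Gr}\big(V_x(G)|_{U_q(M)}\big)\simeq\bigoplus_{y\in\calM_x}V_y(M)^{\oplus b_y}$, and comparing total dimensions forces $\sum_y b_y=\ell^{|\Phi^+|-|\Phi_M^+|}$.

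The main obstacle is then to prove that all $b_y$ coincide, with common value $b=\ell^{|\Phi^+|-|\Phi_M^+|-\dim T_M}$. The strategy is a formal trace argument. For every $z\in Z_q(M)$, regarded inside $U_q(G)$, the trace of its action on $V_x(G)$ equals both $\sum_y b_y\cdot\ell^{|\Phi_M^+|}\cdot z(y)$ and the value at $x$ of the Cayley-Hamilton formal trace $t_G(z)$. Using the PBW decomposition of $U_q(G)$ as a free $U_q(M)$-module, together with the compatibility between the two Cayley-Hamilton structures, one aims to establish the identity $t_G(z)=b\cdot\ell^{|\Phi_M^+|}\cdot\Psi_M^*\big(\mathrm{Tr}_{Z_q(M)/Z_0(M)}(z)\big)$ in $Z_q(G)$. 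Evaluating at $x$ turns this into $\sum_y b_y\, z(y)=b\sum_y z(y)$ for all $z\in Z_q(M)$; since the characters $\{z\mapsto z(y)\}_{y\in\calM_x}$ are pairwise distinct on the \'etale fibre and hence linearly independent, this forces $b_y=b$ for each $y$. The delicate step, following the approach of \cite{dprr}, is the PBW and trace bookkeeping that yields this universal identity between $t_G$ and the fibrewise trace $\mathrm{Tr}_{Z_q(M)/Z_0(M)}$, handled uniformly for the extra torus directions present because $G$ is reductive rather than semisimple simply connected.
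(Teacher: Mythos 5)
Your plan correctly identifies the structure of the statement: decompose $V_x(G)$ according to the action of $Z_q(M)$, use the \'etaleness of $\zeta_M$ over $H_M^{sr}$ (coming from Proposition \ref{prp:Z} applied to $M$) to see that $\calM_x$ has exactly $\ell^{\dim T_M}$ points with $Z_q(M)$ acting semisimply by those characters, and use $X_M^{sr}\subset X_M^{ir}$ to conclude that the Jordan-H\"older factors of the $y$-component are copies of $V_y(M)$. All of that is sound and matches the setup. The dimension count $\sum_y b_y = \ell^{|\Phi^+|-|\Phi^+_M|}$ is also fine.

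The problem is that you have not proved the theorem: the statement is precisely that all the $b_y$ are equal, and you explicitly defer this (``one aims to establish the identity...'', ``the delicate step... is the PBW and trace bookkeeping''). That deferred identity, $t_G(z)=b\cdot\ell^{|\Phi_M^+|}\cdot\Psi_M^*\big(\mathrm{Tr}_{Z_q(M)/Z_0(M)}(z)\big)$, is not self-evident, and it is not at all clear how one would establish it directly; in fact this is exactly the content that the paper supplies by a completely different mechanism. The paper invokes \cite{dprr}, Lemma~5.7, which reduces the equality of multiplicities to the algebraic statement that $Z_q(G)\otimes_{Z_0(M)}Z_q(M)$ is an integral domain. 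Proving that domain property is where the real work is, and it is precisely where the generalized Steinberg section enters: after unwinding as in \cite{dprr} Proposition~7.4, one must show a certain fibre product variety $Y$ over $\calB = H_G^{sr}\cap\Psi_M^{-1}(H_M^{sr})$ is irreducible, and since $Y\to\calB$ is an unramified covering of smooth varieties it suffices to show $Y$ is connected, which the paper does by exhibiting a section of the relevant quotient map built from $\chi$ of Theorem~\ref{teo:simultanea}. Your proposal never uses the simultaneous Steinberg section (or any substitute for it) and so has a genuine gap exactly where the paper's main technical input is needed. To complete your argument you would either have to prove the trace identity you formulated — which would amount to an independent route to the equidistribution of multiplicities — or switch to the paper's route via the domain property and the Steinberg section.
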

\proof In view of Lemma 5.7 in \cite {dprr}, our Theorem will follow
once we show

\begin{proposition}
The algebra $Z_q(G)\otimes_{Z_0(M)}Z_q(M)$ is an integral domain.
\end{proposition}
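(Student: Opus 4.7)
The plan is to translate the statement into algebraic geometry and reduce it to a field-theoretic disjointness question which can be attacked with the cartesian descriptions of $X_G$ and $X_M$ from Proposition \ref{prp:Z}.

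First, passing to spectra, the claim is that $X_G\times_{H_M}X_M$ is an integral scheme, where $X_G\to H_M$ is $\Psi_M\circ\zeta_G$ and $X_M\to H_M$ is $\zeta_M$. Applying the cartesian square of Proposition \ref{prp:Z} for $M$ to rewrite $X_M = H_M\times_{Q_M}Q_M$ (with $Q_M:=(T_{M^{ss}}/W_M)\times S_M$ and the right-hand map the $\ell$-th power $\eta_M$), this identifies the fibre product with
$$X_G\times_{Q_M}Q_M,$$
where $X_G\to Q_M$ is the composite $\theta_M\circ\Psi_M\circ\zeta_G$ and the second map is $\eta_M$. So we must show that this latter fibre product is integral.

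For reducedness, recall that $\eta_M$ is finite flat of degree $\ell^{\dim T_M}$ and étale over the regular locus $Q_M^r$. The hypothesis that some $x\in X_G^{ir}$ has $\Psi_M(\zeta_G(x))\in H_M^{sr}$ guarantees that $(\theta_M\circ\Psi_M\circ\zeta_G)^{-1}(Q_M^r)$ is a non-empty open subset of $X_G$; over it, the pullback is étale. As $X_G$ is integral and Cohen--Macaulay (see the proof of Proposition \ref{prp:Z}), so is the finite flat extension $X_G\times_{Q_M}Q_M$. Being $S_1$ and generically reduced, it is reduced.

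For irreducibility, since $X_G$ is integral and the cover $X_G\times_{Q_M}Q_M\to X_G$ is étale on a dense open, irreducibility is equivalent, at the generic point, to $K(X_G)\otimes_{K(Q_M)}K(Q_M)$ (with the second factor regarded as a degree $\ell^{\dim T_M}$ extension of the first via $\eta_M^*$) being a field. By standard linear disjointness this holds iff no non-trivial $\ell$-th root of an element of $K(Q_M)$ lies in $K(X_G)$. To prove this, I would first check that $K(Q_M)$ is algebraically closed in $K(H_G)$: the map $\theta_M\colon H_M\to Q_M$ has geometrically integral generic fibres (regular conjugacy classes, times a single point of $S_M$), and the projection $\Psi_M\colon H_G\to H_M$ factors through $H_L$ with both factors having geometrically integral generic fibres (products of unipotent radicals $V_L,V_L^-$, of the complementary torus $R\subset T$ with $T=R\times T_M$, and of $\ker\sigma$). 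Now by the cartesian diagram of Proposition \ref{prp:Z}, $K(X_G)/K(H_G)$ is a Kummer-type extension generated by $\ell$-th roots of elements coming from $K(Q_G)$; an element $\xi\in K(X_G)$ with $\xi^\ell\in K(Q_M)\subset K(H_G)$ must therefore correspond, under the Kummer dictionary, to a character of $T_G$ that pulls back, after raising to $\ell$-th powers, from $K(Q_M)$. Using that $\sigma$ splits the inclusion $M\subset L$ and identifies the characters of $T_M$ with a saturated sublattice of $\Lambda_G$, one concludes that $\xi$ itself is already the pullback of a character from $T_M$, hence $\xi\in K(Q_M)$, as required.

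The main obstacle is the final identification in the last paragraph: matching up the Kummer subgroup of $K(X_G)/K(H_G)$ that produces $\ell$-th roots of elements of $K(Q_M)$ with characters that actually descend through $\sigma$. This requires a careful compatibility check between the fundamental-character description of $Z_1(G)$ from Section \ref{sec:centro} and the pullback $\Psi_M^*$ on coordinate rings arising from the splitting; once this compatibility is in place, the disjointness is automatic.
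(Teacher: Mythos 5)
Your setup is fine — the identification of $\Spec\bigl(Z_q(G)\otimes_{Z_0(M)}Z_q(M)\bigr)$ with $X_G\times_{Q_M}Q_M$ via the cartesian square of Proposition \ref{prp:Z} for $M$, and the reducedness argument (finite flat over the Cohen--Macaulay $X_G$, generically \'etale, hence $S_1$ and generically reduced) is essentially the reduction the paper imports from Proposition 7.4 of \cite{dprr}. The gap is in the irreducibility step, and it is not the ``compatibility check'' you flag at the end but the core of the matter. Your criterion ``$F\otimes_K K'$ is a field iff no non-trivial $\ell$-th root of an element of $K$ lies in $F$'' presupposes that $K'/K$ is Galois of Kummer type, and the extension $\eta_M^*\colon \mk(Q_M)\hookrightarrow \mk(Q_M)$ is not: only the $S_M$-factor is Kummer, while on the factor $T_{M^{ss}}/W_M$ the covering $\eta_{M^{ss}}$ has Galois closure with group $T_{M^{ss}}[\ell]\rtimes W_M$ and the subgroup $W_M$ corresponding to $\eta_{M^{ss}}$ is not normal (already for $SL(2)$, $\eta$ is the degree-$\ell$ Chebyshev-type map on $\mA^1$, whose monodromy is dihedral). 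For a non-Galois $K'/K$ the condition $F\cap K'=K$ does not imply that $F\otimes_KK'$ is a field. The same problem invalidates the next step: $\mk(X_G)$ is \emph{not} generated over $\mk(H_G)$ by $\ell$-th roots of elements of $\mk(Q_G)$ — the generators of $Z_1(G)$ satisfy the relations imposed by $\eta_G^*$ (generalized Chebyshev relations), not pure power relations — so there is no ``Kummer dictionary'' identifying intermediate elements with characters of $T_G$, and the concluding descent through $\sigma$ has nothing to act on.

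What actually closes the argument — and what your proposal omits entirely — is the generalized Steinberg section, i.e.\ the main construction of the paper. The paper combines both base changes into a single covering: $Y$ is the pullback along $\tilde\theta=\bigl((\theta_M\circ\Psi_M)\times\theta_G\bigr)\bigr|_{\calB}$ of the \'etale covering $\eta_M\times\eta_G$ over the regular locus, so irreducibility of $Y$ amounts to transitivity of the monodromy of $\pi_1(\calB)$ on the fibre, i.e.\ to connectedness of the pullback. This is obtained by exhibiting a \emph{section} of $\theta=(\theta_M\circ\Psi_M)\times\theta_G$, namely Theorem \ref{teo:simultanea} applied to the admissible sequence $\{M\subset G\}$ (twisted as $\chi'(s,a,b)=\bigl(sA(s^2,a,b),\,s^{-1}C(s^2,a,b)^{-1}\bigr)$ to land in $H_G$): a section forces $\theta_*$ to be surjective on fundamental groups, hence every connected covering of the base pulls back to a connected one. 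Without this input — or an equivalent proof that the image of $\pi_1(X_G^{sr})$ acts transitively on the fibre of $\eta_M$ — the linear disjointness you need cannot be established by field theory alone.
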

\begin{proof}
Set $\calB = H_G^{sr} \cap \Psi_M^{-1}(H_M^{sr})$ and $\calA =
\zeta_G^{-1}(\calB)$.

Following almost verbatim the proof of Proposition 7.4 in \cite{dprr}, 
one is reduced to show that   taking the fiber product
 $$\begin{CD}Y@>>> T_{M^{ss}}/W_M  \times S_M \\ @VVV @VV{\eta_M}V\\ \mathcal
   A@>{\theta_M\circ\Psi_M\circ\zeta_G}>> T_{M^{ss}}/W_M  \times S_M \end{CD}$$ the variety
   $Y$ is irreducible.  

To show the irreducibility of $Y$,   notice that we have  
the following cartesian diagram
 $$\begin{CD}
 Y @>>> (T_{M^{ss}}/W_M  \times S_M ) \times ((T_{G^{ss}}/W_G) \times S_G) \\
 @VVV @VV{\eta_M\times \eta_G}V \\
 \mathcal B @>{\tilde \theta}>> (T_{M^{ss}}/W_M  \times S_M ) \times ((T_{G^{ss}}/W_G) \times S_G) 
 \end{CD}$$
where $\tilde \theta$ is the restriction to $\calB$ of the map
$\theta=(\theta_M\circ \Psi_M)\times \theta_G$.  By the definition of
$H_G^{sr}$ and $H_M^{sr}$, the set $\tilde \theta(\calB)$ is contained in
$\calC=(T_M/W_M)^r\times (T_G/W_G)^r$, and the map $\eta_M\times
\eta_G$ over $\calC$ is an unramified covering of degree $\ell^{\dim
  T_G+\dim T_M}$. So the map $Y\lra B$ is an unramified covering of
smooth varieties. Hence, to prove that $Y$ is irreducible it is enough
to prove that it is connected. We prove this by giving a section of
$\theta: H_G \lra (T_{M^{ss}}/W_M  \times S_M ) \times ((T_{G^{ss}}/W_G) \times S_G)$.

Consider the admissible sequence $\calM =\{M\subset G\}$.  We can
choose $S_M$ and $S_G$ subtori of $T_M$ and $T_G$ satisfying
conditions S1 and S2 of Section \ref{ssec:simultanea}, and we identify
these tori with the quotients of $G$ by $G^{ss}$ and of $M$ by $M^{ss}$ respectively, 
using the maps $p_{S_G}$ and $p_{S_M}$. Set $S=S_M\times S_G$ and let
$\chi: S \times \mk^{\Delta_L} \times \mk^{\Delta_G} \lra B\times U^-$ be 
the generalized Steinberg section constructed in Theorem
\ref{teo:simultanea}. Recall that
$\chi(s,a,b)=\big(sA(s,a,b),C(s,a,b)\big)$ and that, as it is shown in
the proof of Theorem \ref{teo:simultanea} and it is clear by
construction, $r_\calM(\chi(s,a,b))= \big(\grf(s), \psi(s,a,b)\big) \in
S \times (T_{M^{ss}}/W_M \times T_{G^{ss}}/W_G) $. So, $\grf$ is an
isomorphism and for all $s$ the map $(a,b)\mapsto \psi(s,a,b)$ is a
bijection     between $\mk^{\Delta_L} \times \mk^{\Delta_G}$ and
$T_{M^{ss}}/W_M \times T_{G^{ss}}/W_G$.  

Now define $\chi':S \times
\mk^{\Delta_L} \times \mk^{\Delta_G} \lra H_G$ by 
$$
\chi'(s,a,b) = \big(sA(s^2,a,b) , s^{-1}C(s^2,a,b)^{-1} \big)
$$
and notice that 
$$ \theta \big( \chi'(s,a,b)\big) =
\big(\grf(s),\psi(s^2,a,b)\big). $$ So $\theta \circ \chi'$ is
bijective and, being the involved varieties smooth, it is an isomorphism.

This finishes the proof of the Proposition and hence of Theorem
\ref{branch}.
\end{proof}
\begin{remark}\hfill

 \begin{enumerate}

\item Notice that since the algebras $U_q(G)$ and  $U_q(G^{ss})$  have   the same degree, equal to $\ell^{|\Phi^+|}$, and we are dealing with generic irreducible representations, one could have assumed right away that $G$ is semisimple. 
\item If $G$ is semisimple with simply connected cover $\tilde G$ again the algebras $U_q(G)\subset U_q(\tilde G)$ have the same degree
and our result holds verbatim also for $U_q(G)$.
\item Assume $G$   semisimple.  When  $L=T_G$,  the algebra $U_q(L)$ is the algebra of functions on $T_G$ which has the  $K_\lambda$'s  as a basis. Each element in $T_G$ is a character for this algebra. Given a finite dimensional  $U_q(L)$ module   we can consider its character as a non negative, integer valued  function on $\Lambda_G$  of finite support. This applies in particular to   $U_q(G)$ modules. Consider the   map $\Psi_T:H_G\to T_G=H_{T_G}$   and take an irreducible $V$ lying over an element $h\in H_G^{sr}$. The set $\mathcal A:=\eta^{-1}(\Psi(h))$ has $\ell^{\dim T}$ elements.  Our result gives that the character of $V$  equals  $\ell^{|\Phi^+|-\dim T}$ times the characteristic function of $\mathcal A$.\end{enumerate}
\end{remark}

\subsection {The case of $GL(n)$}
We want to apply Theorem \ref{branch} in the special case in which
$G=GL(n)$ and $M=GL(n-1)$.  We keep the notations of the previous
section
\begin {theorem}
Let $x\in X_{GL(n)}^{ir}$. Assume that $\Psi_{GL(n-1)}(\zeta_{GL(n)}(x))\in
H^{sr}_{GL(n-1)}$.  Then the
restriction of $V_x(GL(n))$ to $U_q(GL(n-1))$ is semisimple and
 $$ V_x(GL(n))|_{U_q(GL(n-1))}\simeq \bigoplus_{y\in \calM_x}V_y(GL(n-1)).$$
\end{theorem}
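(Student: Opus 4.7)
The plan is to deduce this result essentially as a corollary of Theorem \ref{branch}, by (i) computing that the multiplicity $b$ equals $1$ in the $(GL(n), GL(n-1))$ case, and (ii) upgrading the associated-graded statement to an actual direct-sum decomposition by using central characters.

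First, I would verify the setup: the pair $L = GL(n-1)\times GL(1) \subset G = GL(n)$, $M = GL(n-1) \subset L$ with the obvious splitting $\sigma : L \to M$ sending $\mathrm{diag}(A,b) \mapsto \mathrm{diag}(A,1)$ puts us exactly in the framework preceding Theorem \ref{branch}. Hence Theorem \ref{branch} applies and gives
\[
\mathrm{Gr}\bigl(V_x(GL(n))\bigr)\big|_{U_q(GL(n-1))} \;\simeq\; \bigoplus_{y\in\calM_x} V_y(GL(n-1))^{\oplus b},
\qquad b=\ell^{|\Phi^+|-|\Phi^+_M|-\dim T_M}.
\]
For $G=GL(n)$ one has $|\Phi^+|=\binom{n}{2}$, while for $M=GL(n-1)$ one has $|\Phi^+_M|=\binom{n-1}{2}$ and $\dim T_M = n-1$. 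Hence
\[
b = \ell^{\binom{n}{2}-\binom{n-1}{2}-(n-1)} = \ell^{(n-1)-(n-1)} = 1.
\]
This already yields the desired multiplicity-one decomposition on the level of the associated graded.

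The remaining, and main, point is to argue that the restriction is genuinely semisimple and not merely filtered with the right graded pieces. For this I would invoke the action of the center $Z_q(M)$ on $V_x(G)|_{U_q(M)}$. By definition, every $y\in\calM_x$ corresponds to a maximal ideal of $Z_q(M)$, and distinct points of $X_M^{ir}$ give distinct central characters; moreover $Z_q(M)\subset Z_q(G)\otimes_{Z_0(M)}Z_q(M)$, which by the Proposition proved inside Theorem \ref{branch} is an integral domain, so $Z_q(M)$ acts on $V_x(G)$ with composition factors precisely parametrised by $\calM_x$. Decompose $V_x(G)$ into generalised $Z_q(M)$-eigenspaces: each eigenspace $V_x(G)_{y}$ has associated graded equal to $V_y(M)^{\oplus b} = V_y(M)$ (by $b=1$), so $V_x(G)_{y}$ has length one as a $U_q(M)$-module and must equal $V_y(M)$. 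Summing over $y\in\calM_x$ yields the claimed direct-sum decomposition, in particular semisimplicity.

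The main obstacle is precisely this last step: the associated-graded statement of Theorem \ref{branch} does not a priori preclude non-split extensions between the $V_y(M)$'s. What saves us is the combination of multiplicity one with the separation of central characters; both ingredients are essential, and both break down in the general setup of Theorem \ref{branch}. I would expect the numerical computation ($b=1$) to be completely routine, while the central-character argument requires just the generic hypothesis $\Psi_{GL(n-1)}(\zeta_{GL(n)}(x))\in H^{sr}_{GL(n-1)}$, which guarantees that every $y\in\calM_x$ lies in $X_M^{ir}$ and that these points are pairwise distinct.
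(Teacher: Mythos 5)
Your proof is correct and matches the paper's approach: invoke Theorem \ref{branch} and compute that the exponent $|\Phi^+|-|\Phi^+_M|-\dim T_M=\binom{n}{2}-\binom{n-1}{2}-(n-1)$ vanishes, so $b=1$. The paper simply declares that semisimplicity then ``follows immediately'', while you make the needed step explicit --- decomposing $V_x(GL(n))$ into generalised $Z_q(GL(n-1))$-eigenspaces, observing that the distinct $y\in\calM_x$ (all lying in $X_M^{sr}\subset X_M^{ir}$ by the hypothesis on $\Psi_{GL(n-1)}(\zeta_{GL(n)}(x))$) give pairwise distinct central characters, and using $b=1$ to force each eigenspace to be a single copy of $V_y(GL(n-1))$ --- which is exactly the argument the paper leaves to the reader.
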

\begin{proof} Both
statements follow immediately from Theorem \ref{branch} once we remark
that in this case $$ |\Phi^+|-|\Phi^+_M|-\dim T_M=
\binom{n}{2}-\binom{n-1}{2}-(n-1)=0.$$\end{proof}

We can of course iterate this process restricting first to
$U_q(GL(n-1))$ then to $U_q(GL(n-2))$ and so on. Since $U_q(GL(1))$ is
a polynomial ring in one variable, hence commutative, we deduce that
there is a dense open set in $X_0\subset X_{GL(n)}$, whose simple definition
we leave to the reader, with the property that if $x\in X_0$ then
$V_x(GL(n))$ has a standard decomposition into a direct sum of one
dimensional subspaces. This phenomenon is a counter part for quantum
groups of what we have seen at the end of section \ref{sez:app1} and
it is analogous to the Gelfand-Zeitlin phenomenon.




\def\cprime{$'$} \def\cprime{$'$} \def\cprime{$'$}

\end{document}